\documentclass[10pt, reqno]{amsart}
\numberwithin{equation}{section}
\usepackage{amssymb}
\usepackage{hyperref}
\usepackage{cite}

\usepackage{tikz, pgfplots}
\usetikzlibrary{shapes,snakes}
\usetikzlibrary{patterns}

\newcommand{\R}{\mathbb{R}}
\newcommand{\C}{\mathbb{C}}

\newcommand{\wh}[1]{\widehat{#1}}

\newcommand{\eps}{\varepsilon}

\newcommand{\nsc}{|\nabla|^{s_c}} 

\newcommand{\N}{\mathcal{N}}

\newcommand{\M}{\mathcal{M}}
\renewcommand{\S}{\mathcal{S}}

\newcommand{\F}{\mathcal{F}}

\newcommand{\norm}[1]{\left\lVert #1\right\rVert}  

\newcommand{\Z}{\mathbb{Z}}

\def\({\left(} %
\def\){\right)} %
\def\le{\leqslant} %

\newtheorem{theorem}{Theorem}[section]

\newtheorem{lemma}[theorem]{Lemma}

\newtheorem{corollary}[theorem]{Corollary}

\newtheorem{proposition}[theorem]{Proposition}

\theoremstyle{definition}
\newtheorem{definition}[theorem]{Definition}
\newtheorem{remark}[theorem]{Remark}

\theoremstyle{remark}

\newcommand{\qtq}[1]{\quad\text{#1}\quad}

\def\hsc{\dot H_x^{s_c}}

\begin{document}

\title[Radial mass-subcritical NLS]{The radial mass-subcritical NLS in negative order Sobolev spaces}

\author[R. Killip]{Rowan Killip}
\address{Department of Mathematics, University of California Los Angeles, Los Angeles, CA}
\email{killip@math.ucla.edu}

\author[S. Masaki]{Satoshi Masaki}
\address{Department of Systems Innovation, Graduate School of Engineering Sciences, Toyonaka, Osaka, Japan}
\email{masaki@sigmath.es.osaka-u.ac.jp}

\author[J. Murphy]{Jason Murphy}
\address{Department of Mathematics and Statistics, Missouri S\&T, Rolla, MO}
\email{jason.murphy@mst.edu}

\author[M. Visan]{Monica Visan}
\address{Department of Mathematics, University of California Los Angeles, Los Angeles, CA}
\email{visan@math.ucla.edu}

\begin{abstract}
We consider the mass-subcritical NLS in dimensions $d\geq 3$ with radial initial data. 
In the defocusing case, we prove that any solution that remains bounded in the critical Sobolev space throughout its lifespan must be global and scatter.
In the focusing case, we prove the existence of a threshold solution that has a compact flow.
\end{abstract}

\maketitle

\section{Introduction}

We consider the large-data critical problem for the mass-subcritical nonlinear Schr\"odinger equation (NLS):
\begin{equation}\label{nls}
\begin{cases} (i\partial_t + \Delta) u = \mu |u|^p u, \\
u|_{t=0}\in \dot H^{s_c}(\R^d). \end{cases}
\end{equation}
Here $u:\R_t\times\R_x^d\to\C$ for some $d\geq 3$, $\mu\in\{\pm1\}$, and $s_c:=\frac{d}{2}-\frac{2}{p}$.  This problem is \emph{critical} in the sense that the space of initial data is invariant under the rescaling that preserves the class of solutions, namely,
\[
u(t,x)\mapsto \lambda^{\frac{2}{p}}u(\lambda^2t, \lambda x). 
\]

The \emph{mass-critical} NLS corresponds to the case $s_c=0$ (i.e. $p=\frac4d$), in which case the rescaling preserves the conserved {mass} 
\[
M[u(t)] := \int_{\R^d} |u(t,x)|^2\,dx.
\]
\emph{Mass-subcriticality} then refers to the restriction $0<p<\tfrac4d$, in which case $s_c<0$.

The problem \eqref{nls} is known to be ill-posed in $\dot H^{s_c}(\R^d)$ (cf. \cite{CCT1, KPV}).  However, if one restricts to radial (i.e. spherically-symmetric) data, one can recover local well-posedness for $d\geq 3$ and  $p>\frac4{d+1}$ (see \cite{Hidano,GuoWang}).  For technical reasons to be detailed below, our main results do not address this entire range for all dimensions $d\geq 3$.  Instead, we consider $p_0(d)<p<\frac4d$, where
\begin{equation}\label{range}
{
p_0(d) = \begin{cases} 
\frac{15-2d+\sqrt{4d^2+100d+145}}{5(2d-1)} & 3\leq d\leq 8, \\
\frac{4}{d+1} & d\geq 9.\end{cases}
}
\end{equation}

Equation \eqref{nls} with $\mu=1$ is called \emph{defocusing}, while $\mu=-1$ corresponds to the \emph{focusing} case.  Our main result for the defocusing equation is the following theorem.

\begin{theorem}\label{T:main} Let $\mu=1$, $d\geq 3$, and $p_0(d)<p<\frac{4}{d}$.  Suppose $u:I_{\max}\times\R^d\to\C$ is a radial maximal-lifespan solution to \eqref{nls} such that
\[
\| |\nabla|^{s_c} u\|_{L_t^\infty L_x^2(I_{\max}\times\R^d)} < \infty.
\]
Then $I_{\max}=\R$ and $u$ scatters in both time directions, that is, there exist $u_\pm\in \dot H^{s_c}_x$ such that
\[
\lim_{t\to\pm\infty} \| u(t) - e^{it\Delta}u_{\pm}\|_{\dot H_x^{s_c}(\R^d)} = 0.
\]
\end{theorem}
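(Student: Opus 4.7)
\medskip

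\noindent\textbf{Proof proposal.} The plan is to implement the concentration-compactness/rigidity program of Kenig--Merle in the negative-regularity setting, adapted to the mass-subcritical problem at the scaling-critical regularity $s_c<0$. The radial assumption suppresses the translation symmetry of the linear flow, so the only noncompact symmetry we need to track is scaling, which simplifies the profile analysis considerably.

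\smallskip

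\noindent\emph{Step 1 (small-data theory and stability).} First I would record the small-data local theory and scattering in $\dot H^{s_c}$ in the radial class for $p>p_0(d)$, together with a stability/perturbation result that allows one to upgrade approximate solutions (with small error) to true solutions with comparable spacetime norms. These should be available from the radial local theory of \cite{Hidano, GuoWang} combined with standard Strichartz and Lorentz-refined estimates; the role of $p_0(d)$ is exactly to make the relevant exponents admissible. One then defines a scattering norm $\|u\|_{X(I)}$ (some radial Strichartz-type norm at regularity $s_c$) and reduces the theorem to showing that $\|u\|_{X(I_{\max})}<\infty$ whenever $\||\nabla|^{s_c} u\|_{L_t^\infty L_x^2}<\infty$.

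\smallskip

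\noindent\emph{Step 2 (linear profile decomposition and minimal blow-up solution).} Assume for contradiction that scattering fails at some finite critical threshold $E_c$ defined as the infimum of $\||\nabla|^{s_c}u\|_{L_t^\infty L_x^2}$ over non-scattering solutions. Taking a sequence of radial solutions $u_n$ saturating $E_c$, I would apply a radial linear profile decomposition in $\dot H^{s_c}(\R^d)$: in the purely radial setting one expects bubbles of the form $\lambda_n^{-2/p}\phi(\lambda_n^{-1}x)$ with a temporal translation parameter, and the decomposition is essentially the one used in the radial mass-critical and other subcritical works (e.g.\ by Masaki and collaborators). Combining this decomposition with the stability theory and small-data scattering (to rule out all but one bubble), one extracts a minimal non-scattering solution $u_c$ whose orbit $\{u_c(t)\}$ is precompact in $\dot H^{s_c}_x$ modulo the remaining scaling symmetry; in particular there is a frequency scale function $N(t)$ such that $\{N(t)^{-s_c}[u_c(t)]_{N(t)}\}$ is precompact, where $[\cdot]_\lambda$ denotes rescaling.

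\smallskip

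\noindent\emph{Step 3 (rigidity).} It remains to exclude such a compact critical solution. This is where I expect the main difficulty, because $s_c<0$ means neither mass nor energy is at the critical regularity, so one cannot use conservation laws directly. My plan is the standard two-pronged strategy: classify the possible behaviours of $N(t)$ and kill each in turn. For the self-similar/cascade-type scenarios (where $N(t)\to\infty$ on part of $I_{\max}$), compactness in $\dot H^{s_c}$ with $s_c<0$ together with a frequency-localized mass (or a Duhamel-in-the-past argument propagating additional regularity/decay) should force the solution to lie in $L^2$ and then to vanish identically by conservation of mass and uniqueness. For the soliton-type scenario ($N(t)\equiv 1$, say), one uses the radial symmetry to upgrade to better spatial decay (via the radial Sobolev embedding and compactness), gaining enough integrability to run a truncated virial/Morawetz identity of Lin--Strauss type and deduce that $u_c\equiv 0$. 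In the defocusing case $\mu=+1$ the virial identity has a favourable sign, which is the key input.

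\smallskip

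The principal obstacle will be the rigidity step: verifying that compactness in the negative Sobolev space $\dot H^{s_c}$ propagates to enough additional integrability (say $L^2$ or a weighted space) to close the virial argument. The restriction $p>p_0(d)$ should enter precisely through the exponents needed for the required Strichartz/Lorentz estimates and the radial Sobolev embedding used to pass from compactness in $\dot H^{s_c}$ to pointwise decay away from the origin.
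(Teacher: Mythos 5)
Your proposal follows essentially the same route as the paper: small-data/stability theory in radial weighted Strichartz spaces, a radial linear profile decomposition yielding a minimal almost periodic counterexample with the three standard scenarios for $N(t)$, preclusion of the self-similar and cascade scenarios by upgrading to $L^2$ and invoking mass conservation, and preclusion of the soliton in the defocusing case via the Lin--Strauss Morawetz inequality. You also correctly flag the real crux --- propagating the negative-regularity compactness up to positive regularity (the paper proves $u\in L_t^\infty H^{1/2}_x$ for the cascade and soliton scenarios via an in/out decomposition and radial dispersive estimates, which is precisely where the restriction $p>p_0(d)$ arises) --- so the outline is faithful to the paper's argument.
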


\begin{remark}  See Definition~\ref{D:solution} for the precise notion of solution used here.  The scattering result in Theorem~\ref{T:main} is a consequence of the fact that the solution $u$ will be shown to satisfy critical space-time bounds of the form
\[
\|u\|_{S(\R)} \leq C\bigl(\| |\nabla|^{s_c}u\|_{L_t^\infty L_x^2(\R\times\R^d)}\bigr)
\]
for some function $C(\cdot)$, where $\|u\|_{S(\R)}$ denotes the scattering norm defined in Section~\ref{S:FS}. 
\end{remark}

Theorem~\ref{T:main} fits in the context of recent conditional scattering results for NLS and other dispersive equations; see, for example, \cite{DMMZ, Gao, KenMer2, KV4, KMMV, LuZheng, MMZ, Mur1, Mur2, Mur3, XieFan, Tengfei}.  These results, in turn, were inspired by the global well-posedness and scattering results obtained for the mass- and energy-critical NLS (i.e. \eqref{nls} with $s_c\in\{0,1\}$); see \cite{Bou1, CKSTT, Dod1, Dod2, Dod3, Dod4, Dod5, Gri, KenMer, KTV, KV2, KV3, RV, Tao1, TVZ, Visan, Vis1, Vis2}. 

We will prove Theorem~\ref{T:main} via the concentration-compactness approach to induction on energy.  As such, this result constitutes the first successful application of this type of analysis in negative-regularity Sobolev spaces.

While the well-posedness theory for the mass-subcritical NLS is well-studied (see, for example, \cite{Cazenave} for a textbook treatment), there are very few long-time large-data critical results available and none in the setting of negative-regularity Sobolev spaces.  Our previous work \cite{KMMV} established a large-data critical result in weighted spaces; see also \cite{Ma4}.  Specifically, in \cite{KMMV} we proved that any solution $u$ to \eqref{nls} with $d\geq 1$ and $\max\{ \frac2d, \frac4{d+2}\}<p<\frac4d$ that satisfies
\[
\| |x|^{|s_c|}e^{-it\Delta}u(t)\|_{L_t^\infty L_x^2}<\infty
\]
on its maximal lifespan must be global and scatter in the sense that $|x|^{|s_c|}e^{-it\Delta}u(t)$ converges in $L^2$ as $t\to\pm\infty$.  In fact, this result was shown to hold in both the defocusing and focusing settings.

We would like to point out that when compared with Theorem~\ref{T:main}, the result in \cite{KMMV} has strictly stronger hypotheses and conclusions.  Indeed, by unitarity of $e^{it\Delta}$, Sobolev embedding, and H\"older's inequality, one has 
\[
\||\nabla|^{s_c}u(t)\|_{L_x^2} \lesssim \| e^{-it\Delta} u(t)\|_{L_x^{\frac{dp}{2},2}} \lesssim \| |x|^{|s_c|}e^{-it\Delta}u(t)\|_{L_x^2}.
\]
However, employing Theorem~\ref{T:main} and persistence of regularity type arguments, we can improve the result in \cite{KMMV} for a range of nonlinearities in the radial defocusing case in the following sense: if $u_0$ is radial with $|x|^{|s_c|}u_0\in L^2$ and the corresponding solution remains bounded in $\dot H^{s_c}_x$ throughout its lifespan, then the solution scatters in the weighted norm.  For more details, see Section~\ref{S:weighted} below.

As mentioned above, we prove Theorem~\ref{T:main} via the concentration-compactness approach to induction on energy.  To this end, we introduce the following quantity (cf. \cite{KV2,Ma3}):
\begin{equation}\label{Ec}
E_c :=\inf \limsup_{t\uparrow \sup I_{\max}}\|u(t)\|_{\dot{H}^{s_c}_x},
\end{equation}
where the infimum is taken over all radial maximal-lifespan solutions that do not scatter forward in time.  Note that the small-data theory implies $E_c\in(0,\infty]$ (cf. Corollary~\ref{C:small-data}). Theorem~\ref{T:main} may then be rephrased simply as $E_c=\infty$ for the appropriate range of $d,p$.  The strategy of proof will be to derive a contradiction under the assumption $E_c<\infty$. 

The advantage of working with the quantity $E_c$ is that it allows us to treat the defocusing and focusing problems in parallel. Note that in the focusing case, one does have $E_c<\infty$.  Indeed, if $Q$ is the ground state solution to $-\Delta Q + Q = Q^{p+1}$, then $e^{it}Q(x)$ is a global non-scattering radial solution to \eqref{nls} with $\mu=-1$ that is uniformly bounded in $\dot H^{s_c}_x$.  In particular, $E_c\leq \|Q\|_{\dot H^{s_c}}$.  We will prove that in this case, there exist minimizers to \eqref{Ec}.

\begin{theorem}\label{T:main2}
Let $\mu=-1$, $d\geq 3$, and $\frac{4}{d+1}<p<\frac{4}{d}$.  Then $E_c\leq \|Q\|_{\dot H^{s_c}}$, and there exists a solution that attains $E_c$.
\end{theorem}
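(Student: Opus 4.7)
The bound $E_c \leq \|Q\|_{\dot H^{s_c}}$ is immediate: as noted in the paragraph preceding the theorem, the standing wave $u(t,x) = e^{it}Q(x)$ is a radial, global, non-scattering solution of \eqref{nls} with $\mu = -1$, and its $\dot H^{s_c}$ norm is the constant $\|Q\|_{\dot H^{s_c}}$, so it is an admissible competitor in the infimum \eqref{Ec}.

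For the existence of a minimizer I would run a concentration-compactness extraction adapted to the negative Sobolev setting. Take a sequence of radial maximal-lifespan solutions $u_n: I_n \times \R^d \to \C$, none of which scatters forward in time, with $\limsup_{t \uparrow \sup I_n}\|u_n(t)\|_{\dot H^{s_c}} \to E_c$. Choose $t_n \in I_n$ near the forward endpoint at which $\|u_n(t_n)\|_{\dot H^{s_c}} \leq E_c + \tfrac{1}{n}$; forward non-scattering forces the scattering norm of $u_n$ on $[t_n, \sup I_n)$ to be infinite. Translating in time so that $t_n = 0$, the sequence $\{u_n(0)\}$ is bounded in $\dot H^{s_c}$, and I apply a radial linear profile decomposition at this regularity: after extracting a subsequence,
\[
u_n(0) = \sum_{j=1}^{J} (\lambda_n^j)^{-\frac{2}{p}}\, \bigl(e^{-i\tau_n^j\Delta}\phi^j\bigr)\!\bigl(\tfrac{\cdot}{\lambda_n^j}\bigr) + w_n^J,
\]
with asymptotically orthogonal frames $(\lambda_n^j, \tau_n^j)$ and a remainder whose free evolution vanishes in the scattering norm as $J, n \to \infty$. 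Asymptotic orthogonality then yields the decoupling
\[
\|u_n(0)\|_{\dot H^{s_c}}^2 = \sum_{j \leq J}\|\phi^j\|_{\dot H^{s_c}}^2 + \|w_n^J\|_{\dot H^{s_c}}^2 + o_{n\to\infty}(1),
\]
so in particular $\sum_j \|\phi^j\|_{\dot H^{s_c}}^2 \leq E_c^2$.

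To each linear profile $\phi^j$ I would associate a nonlinear profile $v^j$: a maximal-lifespan solution of \eqref{nls} matching $\phi^j$ at the limit time when $\tau_n^j$ is bounded, or scattering to $e^{it\Delta}\phi^j$ as $t \to \mp\infty$ when $\tau_n^j \to \pm\infty$. Suppose, toward a contradiction, that no single $\|\phi^j\|_{\dot H^{s_c}}$ equals $E_c$. Then each satisfies the strict inequality $\|\phi^j\|_{\dot H^{s_c}} < E_c$, so by the definition of $E_c$ the nonlinear profile $v^j$ must scatter forward in time with a scattering-norm bound depending only on $\|\phi^j\|_{\dot H^{s_c}}$. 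Assembling the rescaled and time-shifted $v^j$ and invoking a long-time perturbation (stability) lemma for \eqref{nls} at critical regularity, I would conclude that $u_n$ itself satisfies a uniform scattering-norm bound on $[0, \sup I_n)$ for large $n$, contradicting the choice of $t_n$. Hence exactly one profile saturates the budget: $J = 1$, $\|\phi^1\|_{\dot H^{s_c}} = E_c$, the remainder tends to zero in $\dot H^{s_c}$, and all other profiles are trivial. The associated nonlinear profile $v^1$ is then a radial solution of \eqref{nls} that does not scatter and has $\limsup_{t}\|v^1(t)\|_{\dot H^{s_c}} = E_c$, so it attains the infimum.

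The main obstacle is that every step above must be implemented in the negative-regularity space $\dot H^{s_c}$, outside the familiar $L^2$ or $\dot H^1$ frameworks in which radial concentration-compactness is usually set up. I would need a refined-Strichartz-based radial profile decomposition compatible with the mass-subcritical scaling (for radial data, only dilations and time translations appear in the frames), decoupling of $\dot H^{s_c}$ norms in the presence of frames with $\lambda_n^j \to 0$ or $\infty$, and, crucially, a stability theory for \eqref{nls} in a scattering norm at regularity $s_c < 0$ capable of absorbing the nonlinearity $|u|^p u$ via Strichartz estimates. The technical range $p_0(d) < p < \tfrac{4}{d}$ specified in \eqref{range} is presumably exactly what is forced by closing these nonlinear estimates. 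Modulo those analytic ingredients, the structural argument follows the established Kenig--Merle / Keraani / Killip--Visan template.
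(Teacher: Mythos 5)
Your proposal follows the same route as the paper: the standing wave $e^{it}Q$ gives $E_c\le\|Q\|_{\dot H^{s_c}}$, hence $E_c<\infty$ in the focusing case, and the minimizer is then produced by the concentration-compactness extraction (radial linear profile decomposition, nonlinear profiles, stability), which is exactly how the paper deduces Theorem~\ref{T:main2} from Theorem~\ref{T:reduction}. The ingredients you flag as ``to be supplied'' are the ones developed in Sections~\ref{S:notation} and~\ref{S:reduction}; note, however, that they require only $p>\frac{4}{d+1}$ --- the stricter threshold $p_0(d)$ is not forced by the profile decomposition or the stability theory, but by the additional-regularity argument of Section~\ref{S:AR}, which is irrelevant to Theorem~\ref{T:main2}.

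One step as written is not justified. You assert that $\|\phi^j\|_{\dot H^{s_c}}<E_c$ forces the nonlinear profile $v^j$ to scatter forward with a bound depending only on $\|\phi^j\|_{\dot H^{s_c}}$ ``by the definition of $E_c$.'' The definition \eqref{Ec} is an infimum of $\limsup_{t\uparrow \sup I_{\max}}\|u(t)\|_{\dot H^{s_c}}$ over non-scattering solutions; since the $\dot H^{s_c}$-norm is not conserved, a solution whose \emph{datum} lies below $E_c$ may a priori grow past $E_c$ before its forward endpoint and fail to scatter, so smallness of the datum alone yields nothing, let alone a uniform scattering bound. What is needed is the equivalence of the limsup-based threshold with the uniform-in-time one, namely $E_c=\sup\{E: L(E)<\infty\}$ where $L(E)$ is the supremal scattering norm over radial solutions bounded by $E$ on compact intervals; the paper imports this identity from \cite{Ma3,KMMV} and runs the Palais--Smale argument with $L$. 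The same point is needed to conclude that the surviving profile $v^1$ actually \emph{attains} the infimum (i.e.\ $\limsup_t\|v^1(t)\|_{\dot H^{s_c}}\le E_c$ rather than merely $\ge E_c$), which uses the approximate decoupling of the $\dot H^{s_c}$-norm at all times, not only at $t=t_n$. With that characterization supplied, your extraction is the standard one and coincides with the paper's.
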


Moreover, we will prove that {
there exists a `soliton-like' minimizer of \eqref{Ec}}; see Theorem~\ref{T:main3} below.  However, the connection between these solutions and the ground state $Q$ (if any) is not clear at this time.  We discuss a related question after the statement of Theorem~\ref{T:main3} below. 

In both the defocusing and focusing cases, we will show that if $E_c<\infty$, then there exist almost periodic solutions that attain $E_c$.

\begin{definition}[Almost periodic]\label{D:AP} A non-zero radial solution $u:I\times\R^d\to\C$ to \eqref{nls} is \emph{almost periodic} if $u\in L_t^\infty \dot H_x^{s_c}(I\times\R^d)$ and there exist functions $N:I\to(0,\infty)$ and $C:(0,\infty)\to(0,\infty)$ such that for any $\eta>0$, 
\[
\sup_{t\in I} \int_{|x|>\frac{C(\eta)}{N(t)}} \bigl| |\nabla|^{s_c}u(t,x)\bigr|^2\,dx + \int_{|\xi|>C(\eta)N(t)} \bigl| |\xi|^{s_c}\wh{u}(t,\xi)\bigr|^2\,d\xi < \eta. 
\]
\end{definition}

We then have the following:

\begin{theorem}[Minimal counterexamples]\label{T:reduction}
Let $\mu\in\{\pm1\}$, $d\geq 3$, and $\frac{4}{d+1}<p<\frac4d$.  If $E_c<\infty$, then there exists a radial almost periodic solution $u:I\times\R^d\to\C$ that attains $E_c$ and fits into one of the following scenarios:
\begin{itemize}
\item Self-similar scenario: $I=(0,\infty)$ and $N(t)=t^{-\frac12}$.
\item Cascade scenario: $I=\R$, $\sup_{t\in\R}N(t)\leq 1$, and there exists a sequence of times along which $N(t)\to 0$. 
\item Soliton scenario: $I=\R$ and $N(t)\equiv 1$. 
\end{itemize}
 \end{theorem}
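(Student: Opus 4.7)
The argument follows the concentration-compactness/rigidity framework, adapted to negative-regularity Sobolev spaces. Start with a sequence $u_n:I_n\times\R^d\to\C$ of radial maximal-lifespan solutions that fail to scatter forward in time and satisfy $\limsup_{t\uparrow\sup I_n}\|u_n(t)\|_{\dot H^{s_c}}\to E_c$. A time-translation reduction lets us assume each $u_n$ has arbitrarily large (but finite) scattering norm on $[0,\sup I_n)$ and $\|u_n(0)\|_{\dot H^{s_c}}\to E_c$.

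The first and principal ingredient is a \emph{radial linear profile decomposition} in $\dot H^{s_c}$ for $\{u_n(0)\}$. Restricting to radial data reduces the noncompact symmetry group to scaling and time translation; spatial translations and Galilean boosts play no role. Using the radial Strichartz estimates that underlie the local theory for $\tfrac{4}{d+1}<p<\tfrac 4d$, together with a refined Strichartz inequality, we seek
\[
u_n(0) = \sum_{j=1}^{J}(\lambda_n^j)^{-2/p}\bigl[e^{it_n^j\Delta}\phi^j\bigr]\bigl(\cdot/\lambda_n^j\bigr) + w_n^J,
\]
with asymptotic $\dot H^{s_c}$-orthogonality of the profiles and a remainder whose free evolution has vanishing $S(\R)$-norm as $n,J\to\infty$. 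Constructing such a decomposition for $s_c<0$, where one cannot rely on a direct Sobolev embedding $\dot H^{s_c}\hookrightarrow L^q$ with $q$ finite, is the chief technical obstacle; it is precisely this step that forces us into the radial setting.

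Given the decomposition, nonlinear profiles $v^j$ are constructed by combining small-data scattering (when $|t_n^j|\to\infty$) with local well-posedness (when $t_n^j\equiv 0$). If every $v^j$ had finite scattering norm, stability theory for \eqref{nls} would deliver a uniform scattering bound for $u_n$ at large $n$, contradicting our choice of sequence. Combined with the definition of $E_c$ (which forces any profile with norm strictly less than $E_c$ to scatter), this leaves a single surviving profile $\phi^1$ with $\|\phi^1\|_{\dot H^{s_c}}=E_c$ and $w_n^J\to 0$; the corresponding $v^1$ attains $E_c$ and does not scatter. Re-applying the profile decomposition to $v^1(t_n)$ along an arbitrary sequence $t_n\in I_{\max}$ and invoking minimality once more shows that $\{v^1(t):t\in I_{\max}\}$ is pre-compact in $\dot H^{s_c}$ modulo the scaling action, which is exactly the almost periodicity of Definition~\ref{D:AP}.

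To reduce to the three scenarios we analyze the frequency-scale function $N(t)$ on $I_{\max}$. Using standard normalizations, one arranges $N$ to be essentially constant on subintervals of unit scattering norm. If $\sup I_{\max}<\infty$ (a symmetric argument handles $\inf I_{\max}>-\infty$), a local-constancy plus rescaling argument yields $N(t)\sim(\sup I_{\max}-t)^{-1/2}$, and a final rescaling places us in the self-similar case $I=(0,\infty)$, $N(t)=t^{-1/2}$. If $I_{\max}=\R$, we split according to whether $\inf_{t\in\R}N(t)$ is positive: if yes, rescaling produces the soliton case $N\equiv 1$; if no, rescaling so that $\sup_t N(t)\le 1$ yields the cascade. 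Given the almost periodicity established above, this trichotomy is essentially a bookkeeping exercise in rescaling.
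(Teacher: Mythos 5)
Your proposal is correct and follows essentially the same route as the paper: a radial linear profile decomposition in $\dot H^{s_c}$ built from a refined/inverse Strichartz inequality adapted to the weighted radial Strichartz spaces, combined with the stability theory and the characterization of $E_c$ via the scattering-size function, followed by the standard rescaling/limiting arguments of Killip--Tao--Visan and Tao--Visan--Zhang to reduce to the self-similar, cascade, and soliton scenarios. The only cosmetic difference is that the paper normalizes the minimizing sequence through the quantity $L(E)=\sup\|u\|_{S(K)}$ and the identity $E_c=\sup\{E: L(E)<\infty\}$ rather than through $\|u_n(0)\|_{\dot H^{s_c}}\to E_c$ directly, but this is an equivalent bookkeeping device.
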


Note that Theorem~\ref{T:reduction} readily implies Theorem~\ref{T:main2}, since $E_c\leq\|Q\|_{\dot H^{s_c}_x}$ in the focusing case. The existence of an almost periodic solution that attains $E_c$ follows along standard lines (see e.g. \cite{KV2, KVClay}); however, as we are working in negative-regularity Sobolev spaces, some details of the argument change, particularly those pertaining to concentration compactness.  We discuss the relevant details in Section~\ref{S:reduction}.  Once the existence of a minimal almost periodic solution is established, the arguments in \cite{KTV, TVZ} carry over to further reduce attention to the three specific solutions recorded above.

The proof of Theorem~\ref{T:main} therefore reduces to precluding the possibility of these three scenarios in the defocusing case.  Our arguments are similar in spirit to those originally used to treat the radial mass-critical problem (cf. \cite{KTV, TVZ}).  The key to precluding each of these three scenarios is to exploit almost periodicity in order to establish additional regularity.  For the self-similar case, it is enough to show that solutions belong to $L^2$.  Indeed, $L^2$ solutions to the mass-subcritical NLS are automatically global, while self-similar solutions blow up at $t=0$.  In the remaining two scenarios, we are able to prove that the solution is in fact bounded in $H^{\frac12}_x$; it is here that we encounter the restriction $p>p_0(d)$. This allows us to prove that cascade solutions have zero mass, yielding a contradiction.  In fact, as the arguments precluding the first two scenarios are equally valid in the focusing case, we deduce the following:

\begin{theorem}[Existence of a soliton-like minimizer]\label{T:minimizer}
Let $\mu\in\{\pm1\}$, $d\geq 3$, and $p_0(d)<p<\frac4d$.  If $E_c<\infty$ then there exists a radial almost-periodic solution $u_c:\R\times\R^d\to\C$ such that
\begin{itemize}
\item $\limsup_{t\to\infty} \norm{u_c(t)}_{\dot{H}^{s_c}_x}=E_c$,
\item $u_c \in C \cap L^\infty (\R;H^{s_c}_x\cap H^{\frac12}_x)$,
\item $N(t)\equiv 1$. In particular, the orbit $\{u_c(t)\}_{t\in\R}$ is precompact in $\dot{H}^{s_c}_x$.
\end{itemize}
\end{theorem}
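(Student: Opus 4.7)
The plan is to invoke Theorem~\ref{T:reduction} under the hypothesis $E_c<\infty$ to obtain a minimal radial almost periodic solution in one of three scenarios (self-similar, cascade, or soliton), and then to preclude the first two, regardless of the sign of $\mu$. This leaves only the soliton scenario with $I=\R$ and $N(t)\equiv 1$. The resulting solution $u_c$ inherits the identity $\limsup_{t\to\infty}\|u_c(t)\|_{\dot H^{s_c}_x}=E_c$ directly from Theorem~\ref{T:reduction}, while $N(t)\equiv 1$ combined with Definition~\ref{D:AP} yields precompactness of the orbit in $\dot H^{s_c}_x$.

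To preclude the self-similar scenario ($I=(0,\infty)$, $N(t)=t^{-1/2}$), I would establish that $u\in L^\infty_t L^2_x$. The strategy is to frequency-localize and exploit that, by almost periodicity, the $\dot H^{s_c}_x$-mass of $u(t)$ at frequencies $\ll N(t)$ is arbitrarily small; since $s_c<0$, a Bernstein estimate converts low-frequency $\dot H^{s_c}$-mass into $L^2$-mass at the price of a factor $N(t)^{-2s_c}$, which is favorable as $N(t)\to 0$ for $t\to\infty$. For the high-frequency complement one uses the Duhamel formula and Strichartz/dispersive bounds; because the nonlinearity is mass-subcritical, the contribution of small high-frequency input is even smaller. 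Once $u\in L^\infty_t L^2_x$ is in hand, the mass-subcritical $L^2$-theory of Cazenave--Weissler extends $u$ to the left of $t=0$, contradicting the maximality of $I=(0,\infty)$.

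To preclude the cascade scenario ($I=\R$, $\sup_t N(t)\leq 1$, $N(t_n)\to 0$), the heart of the argument is the regularity upgrade $u\in L^\infty_t H^{\frac12}_x$. This is carried out via a double Duhamel trick: writing a moderate- or high-frequency projection of $u(t)$ as a two-sided integral of the nonlinearity and exploiting the orthogonality of the free evolution through a bilinear dispersive estimate. This is precisely where the restriction $p>p_0(d)$ intervenes; the explicit formula \eqref{range} reflects the pinch point at which the bilinear pairing closes. With $L^\infty_t H^{\frac12}_x$ established, mass conservation gives a constant value $M[u]=\|u(t)\|_{L^2_x}^2$; but along the subsequence $t_n$, almost periodicity forces $u(t_n)$ to be concentrated (up to $o(1)$) at frequencies $\lesssim N(t_n)$, so that Bernstein plus the uniform $\dot H^{s_c}_x$-bound yields $\|u(t_n)\|_{L^2_x}\to 0$. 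Hence $M[u]=0$, which forces $u\equiv 0$ and contradicts the nontriviality of the minimal counterexample.

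Once the first two scenarios are ruled out, Theorem~\ref{T:reduction} leaves only the soliton case. The same $H^{\frac12}_x$ regularity argument used in the cascade scenario applies here since it needs only $\sup_t N(t)<\infty$; this yields $u_c\in L^\infty_t(H^{s_c}_x\cap H^{\frac12}_x)$, and continuity in time comes from the Duhamel representation together with Strichartz estimates. I expect the main obstacle to be the $H^{\frac12}_x$ upgrade via double Duhamel: in the negative-regularity Sobolev setting, one must design bilinear dispersive estimates that gain a full derivative despite the relatively low power of the nonlinearity, and it is exactly this that dictates the explicit form of $p_0(d)$. Navigating this estimate cleanly -- in particular, handling the radial reduction and balancing frequency against spatial localization in the mass-subcritical regime -- is the delicate technical core of the whole argument.
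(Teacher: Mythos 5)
Your overall architecture is exactly the paper's: invoke Theorem~\ref{T:reduction}, preclude the self-similar scenario by showing membership in $L^2$ (hence global existence, contradicting blowup at $t=0$), preclude the cascade scenario by upgrading to $L_t^\infty H_x^{1/2}$ and then showing the conserved mass vanishes along the times where $N(t_n)\to 0$, and retain the soliton. The zero-mass argument for cascades (interpolation for the high frequencies, Bernstein with the factor $[CN(t_n)]^{|s_c|}$ for the low ones) and the final assembly match the paper.

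The genuine gap is in the mechanism you propose for the $H^{1/2}$ upgrade, which is the technical core. The paper does not use the double Duhamel trick; it uses the radial in/out decomposition $P^\pm$ of \cite{KTV,KVZ}, kernel estimates for $P_M^\pm e^{\mp it\Delta}$, and the interpolated weighted radial Strichartz estimates, splitting the single (reduced) Duhamel integral into a short-time piece, a main piece supported where $|y|\gtrsim M|t|$, and a rapidly decaying tail, then closing with the acausal Gronwall inequality. A genuine double Duhamel pairing requires integrating the free-propagator decay $|t-s|^{-d/2}$ over $|t|,|s|\gtrsim 1$, which diverges for $d=3,4$; since the theorem covers all $d\geq 3$, your route fails in precisely the lowest dimensions unless supplemented by the additional off-diagonal decay $(|x|\,|y|)^{-\frac{d-1}{2}}$ that the in/out kernels supply. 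Relatedly, $p_0(d)$ does not arise from a bilinear pairing closing: it comes from the compatibility of two constraints on the exponent $r_1$ in the radial Strichartz estimates used for the main contribution (Lemma~\ref{L:MC}). A secondary gap: for the self-similar scenario the difficulty is entirely in the high frequencies, where the paper runs a quantitative-decay bootstrap for $\M(A)$, $\S(A)$, $\N(A)$ combining bilinear Strichartz, weighted Bernstein, a weighted dispersive estimate, and again the acausal Gronwall lemma; ``Duhamel plus Strichartz, and small high-frequency input gives smaller output'' does not by itself yield the decay $\M(A)\lesssim A^{-2|s_c|}$ needed to conclude $u(t)\in L^2$.
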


In the defocusing case, we can preclude the possibility of a solution as in Theorem~\ref{T:minimizer} and hence conclude the proof of Theorem~\ref{T:main}.  To do this, we utilize the Lin--Strauss Morawetz inequality, which relies on the defocusing nature of the nonlinearity.  While we can rule out self-similar and cascade solutions for the focusing problem, we cannot rule out solitons.  Thus we arrive at the following extension of Theorem~\ref{T:main2}, which is our last result for the focusing problem.

\begin{theorem}\label{T:main3} Let $\mu=-1$, $d\geq 3$, and $p_0(d)<p<\frac4d$.  Then $E_c\leq \|Q\|_{\dot H^{s_c}}$ and there exists an almost-periodic solution to \eqref{nls} that attains $E_c$ and has the properties given in Theorem~\ref{T:minimizer}. 
\end{theorem}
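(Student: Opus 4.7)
The plan is to combine two earlier ingredients of the paper: the fact that the ground state standing wave is a concrete non-scattering radial solution uniformly bounded in $\dot H^{s_c}_x$, and the existence-of-soliton-like-minimizer result of Theorem~\ref{T:minimizer}.

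First, I would verify the inequality $E_c \leq \|Q\|_{\dot H^{s_c}}$. Let $Q$ denote the radial ground state of $-\Delta Q + Q = Q^{p+1}$. Then $u(t,x) := e^{it} Q(x)$ is a global radial $\dot H^{s_c}_x$-solution of \eqref{nls} with $\mu=-1$, whose $\dot H^{s_c}$-norm is constant and equal to $\|Q\|_{\dot H^{s_c}}$. Since this standing wave fails to scatter forward in time (its spatial profile is time-independent, so its scattering norm on $[0,\infty)$ is infinite), it is an admissible competitor in the infimum defining $E_c$, giving $E_c \leq \|Q\|_{\dot H^{s_c}}$; in particular $E_c$ is finite.

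Second, since $E_c < \infty$ and $(d,p)$ lies in the range $p_0(d) < p < \tfrac{4}{d}$, Theorem~\ref{T:minimizer} applies in the focusing case $\mu=-1$ and produces a radial almost-periodic solution $u_c : \R \times \R^d \to \C$ with $\limsup_{t\to\infty} \|u_c(t)\|_{\dot H^{s_c}_x} = E_c$, $u_c \in C \cap L^\infty(\R; H^{s_c}_x \cap H^{\frac12}_x)$, and $N(t) \equiv 1$, so that the orbit is precompact in $\dot H^{s_c}_x$. These are precisely the properties required by Theorem~\ref{T:main3}.

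I do not expect any substantive new obstacle beyond these two observations: all of the heavy lifting has already been carried out in the proofs of Theorems~\ref{T:reduction} and \ref{T:minimizer}, notably the concentration-compactness reduction to the three scenarios (self-similar, cascade, soliton) and the regularity upgrade forcing minimal solutions into $H^{\frac12}_x$, which is also what produces the restriction $p > p_0(d)$. The only small point requiring care is the justification that $e^{it}Q$ fails to scatter in the framework of Definition~\ref{D:solution}: this reduces to the elementary fact that $\|e^{it}Q\|_{S([0,\infty))} = \infty$ because $Q$ is a nonzero radial Schwartz function and the profile is time-independent, whereas a forward-scattering solution must have finite scattering norm on $[0,\infty)$.
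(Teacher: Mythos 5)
Your proposal is correct and follows the paper's own route exactly: the bound $E_c\leq\|Q\|_{\dot H^{s_c}}$ comes from the non-scattering standing wave $e^{it}Q$ (as noted in the introduction), and the existence of the minimizer with the stated properties is then immediate from Theorem~\ref{T:minimizer}, whose hypotheses are satisfied since $E_c<\infty$. Your extra remark justifying non-scattering via $\|e^{it}Q\|_{S([0,\infty))}=\infty$ is a reasonable filling-in of a point the paper leaves implicit.
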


We emphasize that Theorem \ref{T:main3} is a true existence theorem. In previous studies of the focusing mass- and energy-critical NLS (e.g. \cite{KV2, Dod5}), the above approach is used to yield a contradiction to the stronger and \emph{false} assumption that $E_c$ is strictly smaller than the size of the ground state measured in the appropriate critical Sobolev norm.  In our case, we do not know whether or not the identity
\begin{equation}\label{E:conj}
E_c=\|Q\|_{\dot{H}^{s_c}_x}
\end{equation}
is true. Whether or not this holds in the mass-subcritical case is an interesting open question.

As mentioned above, the analogue of \eqref{E:conj} was proved by contradiction for the focusing mass- and energy-critical problems.  The key ingredient to preclude the soliton scenario in those settings is a variational characterization of ground states.  Unfortunately, there is no characterization of the ground state to the mass-subcritical problem in the negative-regularity Sobolev space $\dot{H}^{s_c}_x$.

One reason to believe \eqref{E:conj} might hold is that it is consistent with the soliton resolution conjecture, which states that a generic solution to \eqref{nls} decomposes into a sum of solitons plus radiation.  This implies that a generic non-scattering solution will contain at least one soliton component.  As one expects the soliton components and the radiation to be mutually asymptotically orthogonal around $t=\sup{I_{\max}}$, one might expect that $E_c$ is attained by the pure one-soliton solution, which would imply \eqref{E:conj}.  Even belief in this conjecture does not fully settle the matter, since it does not address the behavior of certain non-generic solutions, such as excited-state solitons or other threshold behaviors.  It is equally possible that the minimizer in Theorem~\ref{T:main3} is such an exceptional solution. 

In fact, there are other reasons to believe that \eqref{E:conj} may fail.  In \cite{Ma1, Ma2, Ma4}, it is shown that there exists a threshold solution in the critical weighted space (which is contained in $\dot H^{s_c}_x$) that has minimal weighted norm at a fixed time among all non-scattering solutions.  The threshold is strictly smaller than the weighted norm of the ground state.  However, this does not necessarily imply failure of \eqref{E:conj}, as minimality at a fixed time is not always equivalent to minimality around $t=\sup I_{\max}$ (see \cite{Ma3}).  For further analysis of this type of minimization problem for mass-subcritical dispersive equations, we refer the reader to \cite{Ma1,Ma2,Ma3,Ma4,MS,MS2}.

The rest of this paper is organized as follows.  In Section~\ref{S:notation} we set up notation and collect some useful lemmas.  These include the radial Strichartz estimates (Section~\ref{S:RS}), which play a key role in the analysis throughout the paper.  In Section~\ref{S:reduction}, we discuss the proof of Theorem~\ref{T:reduction}.  The general scheme is well-established; thus we focus on those parts of the argument that are new in our setting.  In Section~\ref{S:SS} we rule out the self-similar scenario.  In Section~\ref{S:AR} we establish additional regularity for the cascade and soliton scenarios.  It is in this section that we encounter the technical restriction appearing in \eqref{range}.  Finally, in Section~\ref{S:CS} we rule out the cascade scenario for $\mu=\pm1$ and the soliton scenario for $\mu=1$, thereby completing the proofs of Theorems~\ref{T:main} and \ref{T:main2}.

\subsection*{Acknowledgements} 
S.M. was supported by the Sumitomo Foundation Basic Science Research
Projects No.\ 161145 and by JSPS KAKENHI Grant Numbers JP17K14219, JP17H02854, and JP17H02851. R. K. was supported, in part, by NSF grant DMS-1600942, J.M. by grant DMS-1400706, and M. V. by grant DMS-1500707.

\section{Notation and useful lemmas}\label{S:notation} For non-negative $X$ and $Y$, we write $X\lesssim Y$ to denote $X\leq C Y$ for some $C>0$.  If $X\lesssim Y\lesssim X$, we write $X\sim Y$.  The dependence of implicit constants on parameters is indicated by subscripts; for example, $X\lesssim_u Y$ denotes $X\leq CY$ for some $C=C(u)$.  We write $a'\in[1,\infty]$ for the dual exponent to $a\in [1,\infty]$, that is, the solution to $\tfrac{1}{a}+\tfrac{1}{a'}=1$.  We write $x\pm$ to denote $x\pm\eps$ for any small $\eps>0$.

For a monomial $X$ we write $\text{\O}(X)$ to denote a finite linear combination of products of the factors of $X$, where Littlewood--Paley projections (see below) and/or complex conjugation may be applied in each factor.  We write $\text{\O}(X+Y)=\text{\O}(X)+\text{\O}(Y)$. 

We will frequently encounter weighted Lebesgue spaces.  At times it will be convenient to employ the following notation:
\[
L_x^{r;\alpha}(\R^d):=L_x^r(\R^d;|x|^\alpha\,dx). 
\]
We can then define $L_t^q L_x^r(I\times\R^d)$ and $L_t^q L_x^{r;\alpha}(I\times\R^d)$ in the usual fashion.  For example, if $q,r<\infty$, then
\begin{equation}\label{weight-notation}
\| u\|_{L_t^q L_x^{r;\alpha}(I\times\R^d)} = \biggl(\int_I \biggl(\int_{\R^d} |u(t,x)|^r |x|^{\alpha}\,dx\biggr)^{\frac{q}{r}}\,dt\biggr)^{\frac{1}{q}} = \| |x|^{\frac{\alpha}{r}} u\|_{L_t^q L_x^r(I\times\R^d)}. 
\end{equation}
When $q=r$, we use the shorthand
\[
\|u\|_{L_{t,x}^{q;\alpha}} = \|u\|_{L_t^q L_x^{q;\alpha}}. 
\]

We need the following Gronwall-type inequality from \cite{KVZ}.

\begin{lemma}[Acausal Gronwall inequality]\label{L:AG} Let $r\in(0,1)$ and $K\geq 4$.  Suppose $\{b_k\}$ is a bounded non-negative sequence and $x_k\geq 0$ satisfies
\[
x_k \leq \begin{cases} b_k & \text{ if} \quad 0 \leq k< K \\ b_k + \sum_{\ell=0}^{k-K}r^{k-\ell}x_\ell & \text{ if} \quad k\geq K.\end{cases}
\]
Then
\[
x_k\lesssim \sum_{\ell=0}^k r^{k-\ell}\exp\bigl\{ \tfrac{\log(K-1)}{K-1}(k-\ell)\}b_\ell. 
\]
\end{lemma}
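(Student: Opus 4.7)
The plan is to reduce to a cleaner rescaled inequality and then prove the bound by strong induction on $k$. First, substitute $y_k := r^{-k} x_k$ and $c_k := r^{-k} b_k$; dividing the hypothesis through by $r^k$ yields the simpler system $y_k \leq c_k$ for $0 \leq k < K$ and $y_k \leq c_k + \sum_{\ell=0}^{k-K} y_\ell$ for $k \geq K$. The target inequality becomes $y_k \lesssim \sum_{\ell=0}^k \alpha^{k-\ell} c_\ell$, where $\alpha := (K-1)^{1/(K-1)} = \exp\{\log(K-1)/(K-1)\}$. The key algebraic identity $\alpha^{K-1} = K-1$, equivalently $\alpha^{1-K} = 1/(K-1)$, is what drives the entire argument.

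Setting $A_k := \sum_{\ell=0}^k \alpha^{k-\ell} c_\ell$, the plan is to find a constant $C = C(K)$ such that $y_k \leq C A_k$ holds for every $k$ by strong induction. The base cases $0 \leq k < K$ are immediate since $y_k \leq c_k \leq A_k$ (the $\ell = k$ term of $A_k$ is exactly $c_k$). For the inductive step, assume $y_\ell \leq C A_\ell$ for every $\ell < k$; then the recurrence gives $y_k \leq c_k + C \sum_{\ell=0}^{k-K} A_\ell$, and the heart of the computation is the estimate
\[
\sum_{\ell=0}^{k-K} A_\ell = \sum_{j=0}^{k-K} c_j \,\frac{\alpha^{k-K-j+1}-1}{\alpha-1} \leq \frac{\alpha^{1-K}}{\alpha - 1}\,A_k = \frac{1}{(K-1)(\alpha-1)}\,A_k,
\]
obtained by switching the order of summation, evaluating the inner geometric series, and then applying $\alpha^{1-K} = 1/(K-1)$. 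Combining with $c_k \leq A_k$, the induction closes provided $C \geq 1 + C/[(K-1)(\alpha-1)]$, which is solvable for $C$ exactly when $(K-1)(\alpha-1) > 1$; the explicit choice $C := \frac{(K-1)(\alpha-1)}{(K-1)(\alpha-1) - 1}$ then works.

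The main obstacle — and the reason for the peculiar exponent $\log(K-1)/(K-1)$ in the conclusion — is balancing two competing demands on $\alpha$: it must be as close to $1$ as possible so that the geometric weights $\alpha^{k-\ell}$ are sharp, yet large enough that the factor $\alpha^{1-K}$ compensates for the absence of a $K$-gap in $A_k$. With $\alpha = (K-1)^{1/(K-1)}$, the elementary bound $e^x \geq 1+x$ gives $\alpha - 1 \geq \log(K-1)/(K-1)$, whence $(K-1)(\alpha-1) \geq \log(K-1) \geq \log 3 > 1$ whenever $K \geq 4$. This is precisely why the hypothesis $K \geq 4$ is imposed and why the resulting constant $C(K)$ remains uniformly bounded (in fact $C(K) \to 1$ as $K \to \infty$), which is what the symbol $\lesssim$ in the conclusion encodes.
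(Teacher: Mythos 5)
Your argument is correct, and every step checks out: the rescaling $y_k=r^{-k}x_k$, $c_k=r^{-k}b_k$ does reduce the hypothesis to $y_k\le c_k+\sum_{\ell=0}^{k-K}y_\ell$; the interchange of summation and the geometric series give $\sum_{\ell=0}^{k-K}A_\ell\le \tfrac{\alpha^{1-K}}{\alpha-1}A_k$; the identity $\alpha^{1-K}=(K-1)^{-1}$ and the bound $\alpha-1\ge\log(K-1)/(K-1)$ make $(K-1)(\alpha-1)\ge\log 3>1$ for $K\ge 4$, so the induction closes with $C=\tfrac{(K-1)(\alpha-1)}{(K-1)(\alpha-1)-1}$, which is indeed decreasing in $K$ and hence uniformly bounded. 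Note that the paper itself gives no proof of this lemma; it is imported from the cited reference [KVZ], where the original argument proceeds differently, by iterating the recurrence and controlling the resulting combinatorial sum over compositions of $k-\ell$ into parts of size at least $K$ (this is where the cited source uses the $K$-step gap, whereas you exploit it through the factor $\alpha^{1-K}$). Your strong-induction route is more elementary and self-contained, and it isolates cleanly why the exponent $\log(K-1)/(K-1)$ and the hypothesis $K\ge 4$ appear. One small remark: the boundedness of $\{b_k\}$ is never used in your proof (nor is it needed for the stated pointwise conclusion), so your argument in fact proves slightly more than is claimed.
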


\subsection{Harmonic analysis tools} 

We will work with the following definition of the Fourier transform:
\[
\F f(\xi) = \wh{f}(\xi) = (2\pi)^{-\frac{d}{2}}\int_{\R^d} e^{-ix\xi}f(x)\,dx. 
\]
For $s\in\R$, $|\nabla|^s$ is defined as the Fourier multiplier operator with multiplier $|\xi|^s$.

Let $\varphi$ be a radial bump function on $\R^d$ supported on $\{ |\xi| \leq \tfrac{5}{3}\}$ and equalling one on $\{|\xi|\leq \frac{4}{3}\}$.  For $N\in 2^{\mathbb{Z}}$ we define the Littlewood--Paley projection operators
\[
\wh{P_{\leq N}f}(\xi)=\varphi(\tfrac{\xi}{N})\wh{f}(\xi),\quad \wh{P_N f}(\xi) = \bigl[\varphi(\tfrac{\xi}{N})-\varphi(\tfrac{2\xi}{N})\bigr]\wh{f}(\xi), \qtq{and} P_{>N}f= f-P_{\leq N}f.
\]
We will often write $f_N=P_N f$ and so on.

These operators are bounded on the usual Lebesgue and Sobolev spaces; in addition, they satisfy the following Bernstein estimates.

\begin{lemma}[Bernstein] Let $1\leq r\leq q\leq\infty$ and $s\geq 0$.  Then
\begin{align*}
\| |\nabla|^{\pm s} P_{ N}u\|_{L_x^r(\R^d)}& \lesssim N^{\pm s} \|P_{N}u\|_{L_x^r(\R^d)}, \\
\|P_{>N}u\|_{L_x^r(\R^d)}&\lesssim N^{-s}\||\nabla|^s P_{>N}u\|_{L_x^r(\R^d)}, \\
\| P_{\leq N} u\|_{L_x^q(\R^d)} & \lesssim N^{\frac{d}r-\frac{d}q}\|P_{\leq N}u\|_{L_x^r(\R^d)}.
\end{align*}
\end{lemma}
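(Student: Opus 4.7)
The plan is to recognize each inequality as a consequence of Young's convolution inequality applied to a suitably rescaled kernel. The common starting point is to write each Littlewood--Paley operator as convolution with a Schwartz function whose $L^a$ norm is controlled by scaling. With $\psi(\xi) := \varphi(\xi) - \varphi(2\xi)$, the operator $P_N$ is convolution with $N^d \check{\psi}(N\cdot)$, and for any Schwartz $g$ and any $a\in[1,\infty]$ one has $\|N^d g(N\cdot)\|_{L^a} = N^{d-d/a}\|g\|_{L^a}$. This single rescaling identity will drive all three bounds.

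For the first inequality, I would note that the Fourier multiplier of $|\nabla|^{\pm s}P_N$ is $|\xi|^{\pm s}\psi(\xi/N) = N^{\pm s}(|\cdot|^{\pm s}\psi)(\xi/N)$. Because $\psi$ is supported in an annulus away from the origin, $|\cdot|^{\pm s}\psi$ is Schwartz, hence its inverse Fourier transform is Schwartz and in $L^1$. Rescaling shows that $|\nabla|^{\pm s}P_N$ is convolution with a kernel of $L^1$ norm $\lesssim N^{\pm s}$, and Young's inequality finishes the estimate.

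For the second inequality, I would introduce a fattened projection $\widetilde{P}_{>N}$ whose multiplier is smooth, equals $1$ on $\{|\xi|\geq N\}$, and is supported in $\{|\xi|\geq N/2\}$, so that $\widetilde{P}_{>N}P_{>N} = P_{>N}$. Writing $P_{>N} = (|\nabla|^{-s}\widetilde{P}_{>N})\circ(|\nabla|^s P_{>N})$, the same rescaling argument applied to the smooth multiplier $|\xi|^{-s}\widetilde{\chi}(\xi/N)$ (Schwartz because the cutoff kills the singularity of $|\cdot|^{-s}$) shows that $|\nabla|^{-s}\widetilde{P}_{>N}$ is convolution with a kernel of $L^1$ norm $\lesssim N^{-s}$, and Young's inequality closes the argument. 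For the third inequality, $P_{\leq N}$ is convolution with $N^d\check{\varphi}(N\cdot)$; applying Young's inequality with $\tfrac{1}{q}+1 = \tfrac{1}{a}+\tfrac{1}{r}$ produces a factor $N^{d-d/a} = N^{d/r-d/q}$, and replacing $u$ by $P_{\leq N}u$ on the right is justified by using a fattened low-frequency projection together with $L^r$-boundedness of Littlewood--Paley projections.

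The only mild subtlety is checking that the relevant multipliers really are Fourier transforms of $L^1$ functions; the smooth cutoffs built into the Littlewood--Paley decomposition handle this automatically, since they remove the singularity of $|\cdot|^{\pm s}$ at the origin. I therefore do not anticipate any substantive obstacle, and the entire lemma is essentially a bookkeeping exercise around Young's inequality and rescaling.
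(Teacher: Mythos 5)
The paper states this lemma without proof, as standard background, so I am judging your argument on its own terms. Your overall strategy---realizing each operator as convolution with a rescaled kernel and applying Young's inequality---is the standard one, and your treatment of the first and third inequalities is correct: there the multipliers $|\xi|^{\pm s}\psi(\xi/N)$ and $\varphi(\xi/N)$ are smooth and compactly supported (with the annular support killing the singularity of $|\cdot|^{\pm s}$ at the origin), so their inverse Fourier transforms are genuinely Schwartz, and the scaling identity $\|N^d g(N\cdot)\|_{L^a}=N^{d-d/a}\|g\|_{L^a}$ does the rest. The passage from $\|u\|_{L^r}$ to $\|P_Nu\|_{L^r}$ (resp.\ $\|P_{\leq N}u\|_{L^r}$) on the right-hand side via fattened projections is also fine.

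There is, however, a genuine error in your justification of the second inequality. The multiplier $|\xi|^{-s}\widetilde\chi(\xi/N)$ is \emph{not} Schwartz: $\widetilde\chi$ is supported in $\{|\xi|\geq N/2\}$ but equals $1$ at infinity, so the function decays only like $|\xi|^{-s}$, and in fact it is not even integrable when $s\leq d$. Consequently you cannot conclude directly that its inverse Fourier transform lies in $L^1$; the claim ``the cutoff kills the singularity at the origin, hence the multiplier is Schwartz'' only addresses the behavior near $\xi=0$, not the slow decay at infinity, which is where the difficulty sits. The fact you need is still true, but it requires an additional step: decompose the multiplier dyadically, $|\xi|^{-s}\widetilde\chi(\xi/N)=\sum_{k\geq 0} m_k(\xi)$ with $m_k$ supported where $|\xi|\sim 2^kN$, apply your compact-support argument to each annular piece to get $\|\check m_k\|_{L^1}\lesssim (2^kN)^{-s}$, and sum the geometric series---which is exactly where the hypothesis $s>0$ enters (for $s=0$ the inequality is just $L^r$-boundedness of $P_{>N}$, which must be handled separately). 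Equivalently, one can write $P_{>N}u=\sum_{M>N}P_Mu$ and apply the first inequality to each $P_M$, summing $M^{-s}$ over $M>N$. Either repair is routine, but as written the step fails.
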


We remind the reader that a function $\omega:\R^d\to\R$ is an $A_q$ weight for $1<q<\infty$ precisely when the Hardy--Littlewood maximal function is bounded on $L^q(\R^d;\omega(x)\,dx)$.  For example, the function $\omega(x) = |x|^a$ is an $A_q$ weight for $1<q<\infty$ if and only if 
\[
-d<a<d(q-1).
\] 
Further, if $\omega$ is an $A_q$ weight, one also has the Littlewood--Paley square function estimate
\[
\| f\|_{L^q(\omega(x)\,dx)} \sim \biggl\| \biggl(\sum_{N\in 2^\Z} |P_N f|^2 \biggr)^{\frac12}\biggr\|_{L^q(\omega(x)\,dx)}.
\]
For this and much more see \cite[Chapter V]{Stein}).

We will need the following weighted Bernstein estimate.

\begin{lemma}[Weighted Bernstein]\label{L:WB} Suppose $1<q<\infty$ and $-d<a<d(q-1)$.  Then $NP_N\Delta^{-1}\nabla$
is bounded on $L^q(|x|^a\,dx)$ uniformly for $N\in 2^\Z$.  Consequently,
\[
\| |x|^{\frac{a}{q}} P_{>N} f\|_{L_x^q(\R^d)} \lesssim N^{-1} \| |x|^{\frac{a}{q}} \nabla f\|_{L_x^q(\R^d)}. 
\]
\end{lemma}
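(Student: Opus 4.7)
The plan is to recognize $NP_N\Delta^{-1}\nabla$ as a convolution operator whose kernel, after rescaling, is a fixed Schwartz function, and is therefore pointwise dominated by the Hardy--Littlewood maximal function. The conclusion then follows from the Muckenhoupt $A_q$ characterization of the weight $|x|^a$ recalled just before the lemma.

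First, write $\widehat{P_N f}(\xi) = \psi(\xi/N)\widehat f(\xi)$ with $\psi(\xi) := \varphi(\xi) - \varphi(2\xi)$ supported in $\{|\xi|\sim 1\}$. The Fourier multiplier of $NP_N\Delta^{-1}\nabla$ then takes the scale-invariant form
\[
m_N(\xi) \;=\; -iN\,\psi(\xi/N)\,\frac{\xi}{|\xi|^2} \;=\; -i\,\chi(\xi/N), \qquad \chi(\eta):=\psi(\eta)\,\frac{\eta}{|\eta|^2}.
\]
Because $\psi$ vanishes near the origin, $\chi$ is smooth and compactly supported, and hence its inverse Fourier transform $\check\chi$ belongs to $\mathcal{S}(\R^d)$. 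The convolution kernel of $NP_N\Delta^{-1}\nabla$ is therefore $K_N(x) = -i\,N^d\,\check\chi(Nx)$, and the standard pointwise bound yields $|K_N \ast f|(x) \lesssim (Mf)(x)$, where $M$ is the Hardy--Littlewood maximal function. Under the hypothesis $-d<a<d(q-1)$, the weight $|x|^a$ lies in $A_q$, so $M$ is bounded on $L^q(|x|^a\,dx)$; combining these observations establishes the first assertion with constants independent of $N$.

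For the consequence I would use the identity $P_M f = M^{-1}\,(M P_M \Delta^{-1}\nabla)\cdot \nabla f$ together with the bound just established, giving
\[
\|P_M f\|_{L^q(|x|^a dx)} \;\lesssim\; M^{-1}\,\|\nabla f\|_{L^q(|x|^a dx)}
\]
uniformly in $M$. Summing this geometric series over dyadic $M>N$ yields
\[
\||x|^{a/q}\,P_{>N}f\|_{L_x^q} \;\lesssim\; N^{-1}\,\||x|^{a/q}\nabla f\|_{L_x^q},
\]
with the convergence of $\sum_{M>N}P_M f = P_{>N}f$ handled by a standard density argument (first on Schwartz $f$, then extend).

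The main obstacle is minor and essentially cosmetic: one must check that $\chi$ is genuinely smooth (which requires $\psi(0)=0$, true here) and handle the density argument at the end. All substantive analytic content is already packaged in the $A_q$ boundedness of $M$ cited in the paragraph preceding the lemma.
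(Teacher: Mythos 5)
Your proposal is correct and follows essentially the same route as the paper: pointwise domination of the convolution operator by the Hardy--Littlewood maximal function, the $A_q$ property of $|x|^a$, and then the identity $P_M f = \Delta^{-1}\nabla\cdot\nabla P_M f$ summed over dyadic $M>N$. The extra details you supply (smoothness of the rescaled multiplier and the density argument) are fine but not load-bearing beyond what the paper's one-line proof already asserts.
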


\begin{proof} The operator $NP_N\Delta^{-1}\nabla$ is a convolution operator with an $L^1$-normalized Schwartz function, and hence it is controlled pointwise by the Hardy--Littlewood maximal function.  Thus boundedness of this operator on $L^q(|x|^a\,dx)$ follows from the fact that $|x|^a$ is an $A_q$ weight.  To deduce the estimate, write $P_Mf=\Delta^{-1}\nabla\cdot\nabla P_Mf$ and sum over $M>N$. 
\end{proof}

\subsection{The linear Schr\"odinger equation} The Schr\"odinger group 
\[
e^{it\Delta} = \F^{-1} e^{-it|\xi|^2} \F
\]
is a convolution operator with kernel
\begin{equation}\label{propagator}
[e^{it\Delta} f](x,y) = (4\pi it)^{-\frac{d}{2}}e^{\frac{i|x-y|^2}{4t}},\quad t\neq 0.
\end{equation}

From these two identities one can deduce
\[
\| e^{it\Delta} f\|_{L_x^2} \equiv \|f\|_{L_x^2}\quad \qtq{and}\quad  \| e^{it\Delta} f\|_{L_x^\infty} \lesssim |t|^{-\frac{d}{2}}\|f\|_{L_x^1} \quad \text{for $t\neq 0$}.
\]
Interpolation then yields the dispersive estimate
\[
\|e^{it\Delta}f\|_{L_x^r} \lesssim |t|^{-(\frac{d}{2}-\frac{d}{r})} \|f\|_{L_x^{r'}}\qtq{for} 2\leq r\leq\infty \qtq{and} t\neq 0.
\]

These estimates are used to establish the standard Strichartz estimates:

\begin{proposition}[Strichartz estimates, \cite{GinVel, KeeTao, Str}] For $d\geq 3$ and  $2\leq q_j,r_j\leq\infty$ satisfying $\tfrac{2}{q_j}+\tfrac{d}{r_j}=\tfrac{d}{2},$
one has
\begin{align*}
\| e^{it\Delta} f \|_{L_t^{q_1} L_x^{r_1}(\R\times\R^d)} & \lesssim \|f\|_{L_x^2(\R^d)}, \\
\biggl\| \int_{t_0}^t e^{i(t-s)\Delta}F(s)\,ds\biggr\|_{L_t^{q_1} L_x^{r_1}(I\times\R^d)} & \lesssim \|F\|_{L_t^{q_2'}L_x^{r_2'}(I\times\R^d)},
\end{align*}
for any $t_0\in \overline{I}\subset\R$. 
\end{proposition}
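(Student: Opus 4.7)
The plan is to establish both estimates via the classical $TT^*$ duality argument combined with the dispersive estimate, handling non-endpoint admissible pairs by Hardy--Littlewood--Sobolev (HLS) fractional integration and the endpoint pair $(q,r) = (2, \tfrac{2d}{d-2})$ (which is admissible precisely because $d \geq 3$) by the Keel--Tao bilinear machinery.

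For the homogeneous estimate with a single admissible pair $(q, r)$, I would first use duality to reduce matters to the bilinear bound
\[
\biggl|\int\!\!\int \ip{e^{i(t-s)\Delta} F(s)}{G(t)}\,ds\,dt\biggr| \lesssim \|F\|_{L_t^{q'} L_x^{r'}}\|G\|_{L_t^{q'} L_x^{r'}}.
\]
The dispersive estimate controls the inner integrand pointwise by $|t-s|^{-d(\frac12 - \frac1r)}\|F(s)\|_{L_x^{r'}}\|G(t)\|_{L_x^{r'}}$, and the admissibility relation $\tfrac{2}{q} + \tfrac{d}{r} = \tfrac{d}{2}$ makes this exactly the setup for HLS in the time variable, yielding the estimate off the endpoint. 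At the endpoint $q = 2$, HLS fails and one must instead dyadically decompose the time separation $|t-s| \sim 2^k$, prove a bilinear bound on each scale by complex interpolation of the dispersive estimate against $L^2$-conservation, and sum using real bilinear interpolation in the Lorentz scale, as in \cite{KeeTao}.

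To obtain the inhomogeneous estimate, I would first derive the symmetric version $\|\int_\R e^{i(t-s)\Delta}F(s)\,ds\|_{L_t^{q_1} L_x^{r_1}} \lesssim \|F\|_{L_t^{q_2'} L_x^{r_2'}}$ by composing the homogeneous estimate for $(q_1, r_1)$ with the dual of the homogeneous estimate for $(q_2, r_2)$, and then upgrade to the retarded form by invoking the Christ--Kiselev lemma, which applies whenever $q_1 > q_2'$. The main obstacle is the diagonal endpoint $(q_1, r_1) = (q_2, r_2) = (2, \tfrac{2d}{d-2})$, where $q_1 = q_2'$ and Christ--Kiselev breaks down; here the retarded estimate must be obtained directly by the same Keel--Tao bilinear scheme, which is engineered precisely for this purpose. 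Since the full statement is classical, the final assembly can simply be quoted from \cite{GinVel, KeeTao, Str}.
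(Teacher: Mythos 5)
Your proposal is correct and is exactly the standard argument contained in the references \cite{GinVel, KeeTao, Str} that the paper cites; the paper itself offers no proof of this proposition, treating it as classical. The $TT^*$/HLS reduction for non-endpoint pairs, the Keel--Tao bilinear interpolation at $(2,\tfrac{2d}{d-2})$, and the Christ--Kiselev upgrade (with the caveat at the double endpoint $q_1=q_2'=2$) are all accounted for correctly.
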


We will also need the following bilinear Strichartz estimate (cf. \cite[Corollary~4.19]{KVClay}). 

\begin{lemma}[Bilinear Strichartz]\label{L:BS} For $d\geq 3$ and $s_c\in(-\tfrac12,0)$, 
\[
\| [e^{it\Delta}f_{\leq M}][e^{it\Delta}g_{>N}]\|_{L_{t,x}^2(\R\times\R^d)} \lesssim M^{\frac{d-1}{2}-s_c}N^{-(\frac12+s_c)}\|f\|_{\dot H_x^{s_c}(\R^d)}\|g\|_{\dot H_x^{s_c}(\R^d)}. 
\]
\end{lemma}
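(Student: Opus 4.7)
The plan is to reduce to the standard $L^2$-based bilinear Strichartz estimate via a Littlewood--Paley decomposition followed by geometric summation. Recall that for free Schr\"odinger solutions with data at dyadic frequencies $K_1 \leq K_2$, Bourgain's bilinear estimate (see e.g. \cite{KVClay}) yields
\[
\|[e^{it\Delta} P_{K_1} u][e^{it\Delta} P_{K_2} v]\|_{L^2_{t,x}(\R\times\R^d)} \lesssim K_1^{\frac{d-1}{2}} K_2^{-\frac12} \|P_{K_1} u\|_{L^2_x} \|P_{K_2} v\|_{L^2_x}.
\]

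First I would decompose $f_{\leq M} = \sum_{M' \leq M} P_{M'} f$ and $g_{> N} = \sum_{N' > N} P_{N'} g$ into Littlewood--Paley pieces, apply the triangle inequality inside the $L^2_{t,x}$ norm, and then invoke the above estimate for each dyadic pair, placing the smaller frequency in the role of $K_1$. Converting $L^2_x$ to $\dot H^{s_c}_x$ via the identity $\|P_K h\|_{L^2_x} \sim K^{-s_c}\|P_K h\|_{\dot H^{s_c}_x}$, the principal regime $M' \leq N'$ produces summands of size
\[
(M')^{\frac{d-1}{2}-s_c}(N')^{-\frac12 - s_c}\|P_{M'}f\|_{\dot H^{s_c}_x}\|P_{N'}g\|_{\dot H^{s_c}_x}.
\]
Since $d \geq 3$ and $s_c < 0$, the exponent $\tfrac{d-1}{2} - s_c$ is positive, so the geometric series in $M' \leq M$ concentrates at its largest dyadic scale, contributing $M^{\frac{d-1}{2}-s_c}$; since $s_c > -\tfrac12$, the exponent $-\tfrac12 - s_c$ is negative, so the series in $N' > N$ concentrates at its smallest scale, contributing $N^{-\frac12-s_c}$. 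A Cauchy--Schwarz over the two dyadic indices, combined with the square-function characterization $\sum_K \|P_K h\|_{\dot H^{s_c}_x}^2 \sim \|h\|_{\dot H^{s_c}_x}^2$, then yields the claimed bound on this regime.

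The only wrinkle is the treatment of dyadic pairs with $M' > N'$, which can arise when $M > N$. There the bilinear estimate is applied with roles reversed, producing contributions of the form $(N')^{\frac{d-1}{2}-s_c}(M')^{-\frac12-s_c}$ with $N < N' < M' \leq M$; carrying out the summation by the same geometric reasoning, one checks that this is still controlled by $M^{\frac{d-1}{2}-s_c}N^{-\frac12-s_c}$, using $M \geq N$ together with $-\tfrac12 - s_c < 0$. I anticipate no genuine obstacle beyond bookkeeping: the entire argument hinges on $s_c \in (-\tfrac12,0)$ being exactly the range in which both dyadic sums converge in the correct directions, and the restriction $d \geq 3$ ensures the $M'$-sum exponent is positive.
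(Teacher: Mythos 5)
Your argument is correct. The paper offers no proof of this lemma---it is quoted directly from \cite{KVClay} (Corollary~4.19)---and your derivation (Littlewood--Paley decomposition, the $L^2$-based dyadic bilinear estimate, and Cauchy--Schwarz summation using $\|P_K h\|_{L^2_x}\sim K^{-s_c}\|P_Kh\|_{\dot H^{s_c}_x}$) is exactly the standard way that corollary is obtained from the $L^2$ version. The only point worth spelling out is the off-diagonal regime $M'>N'$: there the $M'$-sum converges at its top scale because the net exponent $(\tfrac{d-1}{2}-s_c)-(\tfrac12+s_c)=\tfrac{d-2}{2}-2s_c$ is positive, after which $M^{-(\frac12+s_c)}\leq N^{-(\frac12+s_c)}$ finishes the bookkeeping as you indicate.
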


In particular, we need the following corollary of Lemma~\ref{L:BS}.   

\begin{corollary}\label{C:BS} Let $d\geq 3$.  Define
\[
\|u\|_* = \|u(t_0)\|_{\dot H_x^{s_c}(\R^d)} + \|(i\partial_t+\Delta)u\|_{N(I)}
\]
with $t_0\in I\subset\R$ and $N(I)$ as in \eqref{spaces}.  Then for spherically-symmetric functions $u$ and $v$ we have
\[
\|u_{\leq M} v_{>N}\|_{L_{t,x}^2(I\times\R^d)} \lesssim M^{\frac{d-1}{2}+|s_c|}N^{-(\frac12-|s_c|)} \|u_{\leq M}\|_* \|v_{>N}\|_*. 
\]
\end{corollary}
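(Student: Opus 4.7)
The plan is to deduce the inhomogeneous bilinear estimate from the free estimate in Lemma~\ref{L:BS} via the Duhamel formula and Strichartz-type arguments. Setting $F := (i\partial_t+\Delta)u$ and $G := (i\partial_t+\Delta)v$, the Duhamel identity gives
\[
u_{\leq M}(t) = e^{i(t-t_0)\Delta}u_{\leq M}(t_0) - i\int_{t_0}^t e^{i(t-s)\Delta}F_{\leq M}(s)\,ds,
\]
and analogously for $v_{>N}$, using that $P_{\leq M}$ and $P_{>N}$ commute with both $e^{it\Delta}$ and $i\partial_t+\Delta$, and that they preserve radiality. Expanding the product $u_{\leq M}v_{>N}$ yields four terms organized by whether each factor is taken to be its homogeneous (free) or inhomogeneous (Duhamel) piece.

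The purely homogeneous contribution is handled directly by applying Lemma~\ref{L:BS} with $f = u(t_0)$ and $g = v(t_0)$; this gives exactly the prefactor $M^{\frac{d-1}{2}+|s_c|}N^{-(\frac12-|s_c|)}$ times $\|u(t_0)\|_{\dot H^{s_c}}\|v(t_0)\|_{\dot H^{s_c}}$, each of which is dominated by the corresponding $\|\cdot\|_*$ norm. For the three remaining terms the strategy is to view the Duhamel integral $\int_{t_0}^t e^{i(t-s)\Delta}F_{\leq M}(s)\,ds$ as a Minkowski superposition in $s$ of free evolutions with ``initial data'' $F_{\leq M}(s)$. Applying Lemma~\ref{L:BS} pointwise in the forcing variable and then integrating out $s$ by duality with the Strichartz inequality converts the bound for $F_{\leq M}$ into its $N(I)$ norm (and analogously for $G_{>N}$), producing the desired factors of $\|u_{\leq M}\|_*$ and $\|v_{>N}\|_*$.

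The technical obstruction in the previous step is that a naive Minkowski manoeuvre would only bound the non-retarded integral $\int_\R$ rather than the retarded $\int_{t_0}^t$. I would remove this obstruction via the Christ--Kiselev lemma, whose gap-in-exponents hypothesis is satisfied here because the target norm $L^2_{t,x}$ sits strictly below the Strichartz space natural to the forcing side. Once the retardation is absorbed and each mixed term is estimated with prefactor $M^{\frac{d-1}{2}+|s_c|}N^{-(\frac12-|s_c|)}$, summing the four contributions yields the stated inequality.

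The main obstacle is the Christ--Kiselev step together with the accompanying exponent bookkeeping: one must choose the Strichartz pair on the forcing side compatible with the $\dot H^{s_c}$ factor used in Lemma~\ref{L:BS}, and verify that the pairing of $F_{\leq M}(s)$ against a free bilinear quantity integrates to $\|F_{\leq M}\|_{N(I)}$. Everything else, in particular the commuting of Littlewood--Paley projections through the Duhamel formula and the radial preservation needed to invoke Lemma~\ref{L:BS}, is routine.
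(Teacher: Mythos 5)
Your architecture is the same as the paper's: the paper simply cites the classical argument (Visan's thesis, Lemma~2.5) — Duhamel for both factors, four terms, the free bilinear estimate of Lemma~\ref{L:BS} for the homogeneous piece, dual Strichartz plus Christ--Kiselev for the Duhamel pieces — and notes the two necessary modifications. Two corrections to your write-up, though. First, the ``Minkowski superposition'' mechanism you describe for the mixed terms does not work: if you apply Lemma~\ref{L:BS} pointwise in the forcing variable $s$ and then pull the $s$-integral outside the $L^2_{t,x}$ norm, you end up with $\int_I \|e^{-is\Delta}F_{\leq M}(s)\|_{\dot H^{s_c}}\,ds$, an $L^1_t$-type quantity that cannot be ``integrated out by duality'' into $\|F_{\leq M}\|_{N(I)}$. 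The correct step is the one you gesture at afterwards: keep the integral \emph{inside} the norm, i.e.\ recognize the non-retarded operator $\int_I e^{i(t-s)\Delta}F(s)\,ds = e^{it\Delta}\bigl[\int_I e^{-is\Delta}F(s)\,ds\bigr]$ as a single free evolution, apply Lemma~\ref{L:BS} to that datum, and bound its $\dot H^{s_c}$ norm by $\|F\|_{N(I)}$ via the dual Strichartz estimate; Christ--Kiselev then restores the retardation (the gap condition holds since $\tfrac{q}{p+1}<2$ by \eqref{q-conditions}). Second, and this is the point the paper explicitly flags: since $N(I)$ is the weighted space of \eqref{spaces}, the dual Strichartz estimate needed here is the \emph{radial} weighted one (Proposition~\ref{DS}), not the standard dual Strichartz estimate — this is where spherical symmetry enters beyond the hypothesis of Lemma~\ref{L:BS} itself.
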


\begin{proof} The proof is almost identical to the one appearing in \cite[Lemma~2.5]{Visan}.  There are two small changes:  (i) We use frequency-localized solutions and always put the solutions in $\dot H^{s_c}$, yielding the appropriate powers of $M$ and $N$.  (ii) We use the dual radial Strichartz estimate
\[
\biggl\| |\nabla|^{s_c}\int_\R e^{-is\Delta}F(s)\,ds\biggr\|_{L_x^2(\R^d)} \lesssim \|F\|_{N(\R)}
\]
instead of the standard dual Strichartz estimate (see Section~\ref{S:RS}). \end{proof}

\subsection{Radial Strichartz estimates}\label{S:RS} We will rely heavily on improved Strichartz estimates that are available in the radial setting.  We begin by recording a weighted dispersive estimate.

\begin{lemma}\label{L:WDE} Let $d\geq 2$, $r\in(2,\infty)$, and
\[
0\leq\alpha\leq \tfrac{d-1}{2}(1-\tfrac2r).
\]
For any spherically-symmetric function $f$,
\[
\| |x|^\alpha e^{it\Delta}(|\cdot|^\alpha f) \|_{L_x^r(\R^d)} \lesssim |t|^{-(\frac{d}{2}-\frac{d}{r})+\alpha}\|f\|_{L_x^{r'}(\R^d)}.
\]
\end{lemma}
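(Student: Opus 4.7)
The plan is to reduce the claim to a pointwise kernel bound that exploits radiality, extract the critical endpoint at $r=\infty$, and then recover the full range of $(r,\alpha)$ by complex interpolation. The key input is an improved pointwise bound for radial data coming from angular averaging of the Schr\"odinger kernel.

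First, I would exploit the representation $[e^{it\Delta}f](x)=(4\pi it)^{-d/2}\int_{\R^d} e^{i|x-y|^2/(4t)}f(y)\,dy$ and integrate out the angular variable in $y$ via the identity $\int_{S^{d-1}} e^{-iu\cdot\omega}\,d\sigma(\omega)=(2\pi)^{d/2}|u|^{-(d-2)/2}J_{(d-2)/2}(|u|)$. The classical Bessel bound $|J_\nu(u)|\lesssim\min\{u^\nu,u^{-1/2}\}$ with $\nu=(d-2)/2$ then collapses the resulting radial integral to
\[
|e^{it\Delta}f(x)|\lesssim|t|^{-d/2}\int_0^\infty\min\!\bigl\{1,(|x|s/|t|)^{-(d-1)/2}\bigr\}\,|f(s)|\,s^{d-1}\,ds.
\]
Examining the two regimes $|x|s\leq|t|$ and $|x|s\geq|t|$ separately, the weighted kernel $(|x|s)^{(d-1)/2}|t|^{-d/2}\min\{1,(|x|s/|t|)^{-(d-1)/2}\}$ is uniformly dominated by $|t|^{-1/2}$, producing the endpoint estimate
\[
\bigl\||x|^{(d-1)/2}e^{it\Delta}\bigl(|\cdot|^{(d-1)/2}f\bigr)\bigr\|_{L_x^\infty}\lesssim|t|^{-1/2}\|f\|_{L_x^1}.
\]

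Coupling this with the $L^2$-isometry $\|e^{it\Delta}f\|_{L_x^2}=\|f\|_{L_x^2}$, I would apply Stein's complex interpolation theorem to the analytic family $T_zf:=|x|^z e^{it\Delta}(|\cdot|^z f)$ on the strip $0\leq\Re z\leq(d-1)/2$. Since $||x|^z|=|x|^{\Re z}$, the operator norms of $T_z$ on the two boundary lines coincide with those of $e^{it\Delta}$ and $|\cdot|^{(d-1)/2}e^{it\Delta}(|\cdot|^{(d-1)/2}\cdot)$, so the boundary $L^2\to L^2$ and $L^1\to L^\infty$ bounds just established apply. Interpolating at $\Re z=\alpha(r):=\tfrac{d-1}{2}(1-\tfrac{2}{r})$ delivers the critical-weight estimate with decay $|t|^{-(r-2)/(2r)}$, which is exactly $|t|^{-d(1/2-1/r)+\alpha(r)}$ and so matches the claim at the endpoint $\alpha=\alpha(r)$.

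Finally, to fill in the subcritical range $0\leq\alpha<\alpha(r)$ at fixed $r$, I would run Stein's interpolation a second time, now on the strip $0\leq\Re z\leq\alpha(r)$, between the standard dispersive estimate at $\alpha=0$ (with $L_x^{r'}\to L_x^r$ norm $|t|^{-d(1/2-1/r)}$) and the critical-weight endpoint just obtained. A direct computation of the interpolated $|t|$-exponent confirms it equals $-d(1/2-1/r)+\alpha$, matching the claim. The principal technical content lies in the first step---the Bessel-function kernel bound---after which the two complex interpolations are routine.
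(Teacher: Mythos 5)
Your argument is correct and is essentially the paper's proof: both rest on the radial kernel bound $|e^{it\Delta}P_{\mathrm{rad}}(x,y)|\lesssim\min\{|t|^{-d/2},(|x|\,|y|)^{-(d-1)/2}|t|^{-1/2}\}$ (which the paper simply cites from \cite{KVZ} rather than rederiving via Bessel asymptotics) followed by interpolation with the $L^2$ isometry. The only cosmetic difference is that the paper extracts the weighted $L^1\to L^\infty$ bound for every $\alpha\in[0,\tfrac{d-1}{2}]$ at once and then performs a single interpolation, whereas you interpolate in two stages.
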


\begin{proof} Writing $P_{\text{rad}}$ for the projection onto radial functions, we have the kernel estimate
\[
|e^{it\Delta}P_{\text{rad}}(x,y)| \lesssim \min\{|t|^{-\frac{d}{2}},(|x||y|)^{-\frac{d-1}{2}}t^{-\frac12}\} \lesssim t^{-\frac{d}{2}+\theta(\frac{d-1}{2})}(|x| |y|)^{-\theta(\frac{d-1}{2})}
\]
for any $\theta\in[0,1]$ (cf. \eqref{propagator} and \cite[(2.9)]{KVZ}).  This implies the desired estimate with $r=\infty$; the remaining estimates follow from interpolation with $\|e^{it\Delta}\phi\|_{L_x^2} = \|\phi\|_{L_x^2}$. 
\end{proof}

To obtain a wide range of radial Strichartz estimates, we will interpolate between several existing results, which we record in the following proposition:

\begin{proposition}[Radial Strichartz estimates]\label{P:rad1} Let $d\geq 2$, let $\alpha,s\in\R$, and let $q,r\in[2,\infty]$ satisfy the following scaling condition:
\[
\tfrac{2}{q}+\tfrac{d}{r}+\alpha = \tfrac{d}{2}-s. 
\]
For any spherically-symmetric function $f$ we have 
\[
\| |x|^{\alpha} e^{it\Delta} f \|_{L_t^q L_x^r(\R\times\R^d)} \lesssim \| f\|_{\dot H_x^s(\R^d)}
\]
in each of the following three scenarios:\\[1mm]
(i) $(q,r)=(4,\infty)$, $\alpha=\frac{d-1}{2}$, and $s=0$. \\[1mm]
(ii) {
$q=r$} and $-\tfrac{d}{q}<\alpha<\tfrac{d-1}{2}-\tfrac{d}{q}$.\\[1mm] 
(iii) $\alpha=0$ and $\tfrac{2}{q} + \tfrac{2d-1}{r}\leq d-\tfrac12$, with strict inequality whenever $\tfrac1q>\tfrac12-\tfrac1{2d+1}$. 
\end{proposition}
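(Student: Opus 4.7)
\emph{Plan.} I would prove each of the three scenarios separately. Scenarios (i) and (ii) both rest on a $TT^*$ argument applied to the weighted dispersive estimate of Lemma~\ref{L:WDE}, combined with one-dimensional Hardy--Littlewood--Sobolev in time. Scenario (iii) lies beyond this framework and would be invoked from the refined radial Strichartz theory.

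For scenario (i), I would specialize Lemma~\ref{L:WDE} to $r=\infty$ and $\alpha=\tfrac{d-1}{2}$, obtaining the pointwise bound $\bigl\||x|^{(d-1)/2}e^{i(t-s)\Delta}(|\cdot|^{(d-1)/2}f)\bigr\|_{L^\infty_x}\lesssim |t-s|^{-1/2}\|f\|_{L^1_x}$ for radial $f$. A standard $TT^*$ argument then reduces the desired estimate to showing that convolution by $|t|^{-1/2}$ sends $L^{4/3}_t$ to $L^4_t$, which is the one-dimensional Hardy--Littlewood--Sobolev inequality. The scaling $s=0$ is automatic because no Sobolev regularity is spent.

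For scenario (ii), with $q=r$, I would first reduce to a single Littlewood--Paley piece $P_N f$ and rescale to $N=1$. At the reference scale I apply Lemma~\ref{L:WDE} at $r=q$ together with $TT^*$ and one-dimensional Hardy--Littlewood--Sobolev in time, observing that the resulting time-decay exponent $\tfrac{d}{2}-\tfrac{d}{q}-\alpha$ sits in the open range in which HLS closes precisely when the scaling-consistent Sobolev index $s$ is admissible. Summation over dyadic scales then reconstitutes the $\dot H^s_x$ norm via the Littlewood--Paley square function. The range $-\tfrac{d}{q}<\alpha<\tfrac{d-1}{2}-\tfrac{d}{q}$ encodes local integrability of $|x|^{\alpha q}$ near the origin, the validity range of Lemma~\ref{L:WDE}, and the strict range in which the one-dimensional HLS inequality holds.

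Scenario (iii) falls outside this scheme. The extended admissible region $\tfrac{2}{q}+\tfrac{2d-1}{r}\leq d-\tfrac{1}{2}$ reflects more than the pointwise kernel decay of Lemma~\ref{L:WDE}: it exploits the superior Fourier-restriction properties of radial functions, via Bessel-function asymptotics and the Stein--Tomas restriction estimate to the sphere. My plan is to invoke this as a black box from the works of Shao and Guo--Wang. The strict-inequality condition when $\tfrac{1}{q}>\tfrac{1}{2}-\tfrac{1}{2d+1}$ arises from a Knapp-type extremizer at the radial Stein--Tomas endpoint. The main technical obstacle, were one to reprove (iii) from scratch, is precisely this refined Fourier-side analysis; in the present paper I would take the estimate as given.
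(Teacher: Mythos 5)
The paper does not actually prove this proposition: all three scenarios are quoted from the literature ((i) from \cite[Lemma~2.8]{KVZ}, (ii) from \cite[Theorem~2.1]{Hidano}, (iii) from \cite[Theorem~1.5]{GuoWang}). Your treatment of (iii) therefore matches the paper, and your proof of (i) is correct and is in fact the argument behind the cited lemma: the $r=\infty$, $\alpha=\frac{d-1}{2}$ case of Lemma~\ref{L:WDE} gives $|t|^{-1/2}$ decay, and $TT^*$ plus the one-dimensional HLS bound $L_t^{4/3}\to L_t^4$ closes at $s=0$.

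The gap is in your argument for (ii), and it is a real one. First, Lemma~\ref{L:WDE} is only valid for $0\le\alpha\le\frac{d-1}{2}(1-\frac{2}{r})$, whereas scenario (ii) reaches down to $\alpha>-\frac{d}{q}$; the negative range is precisely the one the paper uses, since the scattering norm is $\| |x|^{\gamma/q}u\|_{L_{t,x}^q}$ with $\gamma=\frac{2q}{p}-(d+2)<0$. For $\alpha<0$ the $TT^*$ kernel $|x|^{\alpha}[e^{i(t-s)\Delta}](x,y)|y|^{\alpha}$ is singular at the spatial origin and is not controlled by Lemma~\ref{L:WDE} or by the unweighted dispersive estimate plus H\"older; one needs a genuinely double-weighted fractional integration inequality (Stein--Weiss rather than plain HLS), which is what the Hidano/Vilela proofs actually run on. Second, even in the range where Lemma~\ref{L:WDE} applies, your HLS step pins down $s$: convolution with $|t|^{-\sigma}$ maps $L_t^{q'}\to L_t^q$ only for $\sigma=\frac2q$, and since the scaling condition gives $\sigma=\frac{d}{2}-\frac{d}{q}-\alpha=\frac2q+s$, HLS closes only at $s=0$. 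Your proposed fix --- localize to $P_Nf$ and rescale to $N=1$ --- does not repair this for $s<0$ (again the case needed here, $s=s_c<0$): the rescaled kernel bound $\min\{1,|\tau|^{-\sigma}\}$ with $\sigma=\frac2q+s<\frac2q$ fails to lie in $L_t^{q/2}$, so neither Young nor HLS applies; the true single-frequency estimate relies on the finer radial kernel localization (the propagator at time $\tau$ concentrates where $\bigl||x|-|y|\bigr|\sim|\tau|$, as in Lemma~\ref{L:KE}), not on Lemma~\ref{L:WDE} alone. The safe route, and the one the paper takes, is to quote Hidano's theorem for scenario (ii) rather than rederive it.
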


\begin{proof} The estimate (i) appears in \cite[Lemma~2.8]{KVZ}.  The estimate (ii) appears in \cite[Theorem~2.1]{Hidano}. For the estimate (iii), see \cite[Theorem~1.5]{GuoWang}. 
\end{proof}

By interpolation, Proposition~\ref{P:rad1} yields estimates for every $(\tfrac{1}{r},\tfrac{1}{q})\in(0,\tfrac12)\times(0,\tfrac12)$ (as well as {
 points} on the boundary of this square).  For each such pair, one can interpolate in two different ways, so as to obtain the largest and smallest possible values of the weight $\alpha$.  The result is the following:

\begin{theorem}[Interpolated radial Stichartz estimates]\label{T:RS} Let $d\geq 2$, let $\alpha,s\in\R$, and let $q,r\in(2,\infty)$ satisfy the following scaling condition:
\begin{equation}\label{scalecond}
\tfrac{2}{q}+\tfrac{d}{r}+\alpha = \tfrac{d}{2}-s.
\end{equation}
For any spherically-symmetric function $f$ we have 
\[
\| |x|^\alpha e^{it\Delta} f\|_{L_t^q L_x^r(\R\times\R^d)} \lesssim \|f\|_{\dot H_x^s(\R^d)},
\]
provided
\begin{equation}\label{alpharange}
\max\bigl\{-\tfrac{d}{r},-\tfrac{d}{q}\bigr\}<\alpha<\min\bigl\{\tfrac{d-1}{2}-\tfrac{d-1}{r},(d-1)[1-\tfrac2q],\tfrac{2d-1}{4}[1-\tfrac2r]-\tfrac1q\bigr\}.
\end{equation}
\end{theorem}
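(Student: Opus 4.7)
The plan is to derive Theorem~\ref{T:RS} from Proposition~\ref{P:rad1} by complex interpolation. The key structural observation is that the scaling relation \eqref{scalecond} is linear in $(1/q, 1/r, \alpha, s)$, hence preserved under convex combinations; consequently, any pairwise complex interpolation of admissible estimates drawn from (i)--(iii) will automatically land on the scaling surface \eqref{scalecond}. The task thus reduces to exhibiting every target $(q, r, \alpha)$ in the open region cut out by \eqref{alpharange} as a convex combination of admissible inputs from (i)--(iii).

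Each of the five bounds in \eqref{alpharange} will be identified with a specific pairwise interpolation by matching the interpolation parameter $\theta$ to the target $(1/r, 1/q)$ and then reading off the resulting admissible $\alpha$. A direct computation along these lines shows: the third upper bound $\alpha < \tfrac{2d-1}{4}(1-\tfrac{2}{r}) - \tfrac{1}{q}$ arises from interpolating the single point (i) at $(1/r, 1/q, \alpha) = (0, \tfrac{1}{4}, \tfrac{d-1}{2})$ with (iii) evaluated on its boundary line $\tfrac{2}{q_3} + \tfrac{2d-1}{r_3} = d - \tfrac{1}{2}$, the constraint $\theta \le 1$ transporting directly to this bound; the second upper bound $\alpha < (d-1)(1 - \tfrac{2}{q})$ arises from interpolating (i) with the ray $q_3 = 2$, $\alpha_3 = 0$ inside (iii); and the first upper bound $\alpha < \tfrac{d-1}{2}(1-\tfrac{2}{r})$ arises from interpolating (i) with the trivial energy estimate $(q, r, \alpha) = (\infty, 2, 0)$, which sits at the corner $q_3 \to \infty$ of (iii). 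The two lower bounds $\alpha > -d/q$ and $\alpha > -d/r$ arise analogously by interpolating (i) or a point of (iii) with estimate (ii) evaluated near its lower endpoint $\alpha_2 \to -d/q_2$.

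The main obstacle is the bookkeeping needed to stitch these pairwise interpolations into a single estimate covering the full open region \eqref{alpharange}. One must verify that the union of the resulting interpolation triangles in $(1/r, 1/q, \alpha)$-space covers the interior of \eqref{alpharange}; that each interpolation parameter stays in $(0, 1)$; that the weights $\alpha_2$ borrowed from (ii) remain in the open admissible interval $(-d/q_2, \tfrac{d-1}{2} - \tfrac{d}{q_2})$; and that the use of (iii) respects its strict-inequality regime $1/q_3 > 1/2 - 1/(2d+1)$. The strict inequalities appearing throughout \eqref{alpharange} then reflect the fact that the endpoints of (ii), and the relevant boundary of (iii), must be approached rather than attained.
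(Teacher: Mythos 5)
Your overall strategy --- interpolating the three families of Proposition~\ref{P:rad1}, using the linearity of the scaling relation \eqref{scalecond} to stay on the scaling surface, and matching each constraint in \eqref{alpharange} to a specific pairing --- is exactly the paper's. Your identifications of the second and third upper bounds are correct: in the $(\tfrac1r,\tfrac1q)$ plane the paper interpolates the point $E=(0,\tfrac14)$ coming from (i) against the segment $q=2$ of (iii) to obtain $\alpha<(d-1)(1-\tfrac2q)$, and against the boundary line $\tfrac2q+\tfrac{2d-1}{r}=d-\tfrac12$ of (iii) to obtain $\alpha<\tfrac{2d-1}{4}(1-\tfrac2r)-\tfrac1q$ on the adjacent triangle; the lower bounds likewise come from (ii) near its lower endpoint at $q=r=2$, as you say.

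There are, however, two regions that the pairings you list do not reach, and filling them requires a piece of Proposition~\ref{P:rad1} your proposal never invokes, namely the \emph{upper} end of the weight range in (ii). First, on the region $\tfrac1q<\tfrac14-\tfrac1{2r}$ the operative constraint is $\alpha<\tfrac{d-1}{2}-\tfrac{d-1}{r}$; but interpolating the two single points $(0,\tfrac14)$ and $(\tfrac12,0)$ produces only the one-dimensional segment between them (the boundary line $\tfrac1q=\tfrac14-\tfrac1{2r}$), not the open triangle below it, and interpolating $(0,\tfrac14)$ against the full two-dimensional unweighted region (iii) yields only $\alpha<\tfrac{2(d-1)}{q}$ there, which is strictly weaker than the claimed bound. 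The paper instead first propagates the weight $\tfrac{d-1}{2}$ along the whole line $r=\infty$, $4\leq q\leq\infty$ (using (ii) with $q=r$ near $\infty$, i.e.\ essentially radial Sobolev embedding) and then interpolates that line against the energy estimate at $(\tfrac12,0)$. Second, on the region $\tfrac2q+\tfrac{2d-1}{r}>d-\tfrac12$ the claimed bound is again $\alpha<\tfrac{2d-1}{4}(1-\tfrac2r)-\tfrac1q$, but this region lies on the opposite side of the line $\tfrac2q+\tfrac{2d-1}{r}=d-\tfrac12$ from $E$, so no convex combination of $E$ with points admissible for (iii) can reach it; the paper covers it by interpolating the $\alpha=0$ boundary of (iii) against the point $q=r=2$ of (ii) taken with weight approaching its upper endpoint $-\tfrac12$. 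Both fixes are available from Proposition~\ref{P:rad1}, but they are missing ingredients in your scheme rather than the bookkeeping you defer.
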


\begin{proof} We define the points
\[
O = (0,0),\ A=(\tfrac12,0),\ B=(\tfrac12,\tfrac12),\ C=(0,\tfrac12),\ D=(\tfrac12-\tfrac1{2d-1},\tfrac12),\ E=(0,\tfrac14) 
\]
and consider the following figure: 

\begin{center}
\begin{tikzpicture}[scale=7.5]
\draw[ thick,->] (0,0) -- (0,0.6) node [left] at (0,0.6) {$1/q$}; 
\draw[ thick,->] (0,0) -- (0.7,0); \node [below] at (0.7,0) {$1/r$}; 
\draw[ thick,-] (0.5,0) -- (0.5,0.5) -- (0,0.5); 
\draw[ thick,-] (0.5, 0) -- (0,0.25) -- (0.3,0.5) -- (0.5, 0); 
\fill (0,0) circle[radius=0.3pt] node[below left] { $O$};
\fill (0.5,0) circle[radius=0.3pt] node[below] { $A$};
\fill (0.5,0.5) circle[radius=0.3pt] node[right] { $B$};
\fill (0,0.5) circle[radius=0.3pt] node[left] { $C$};
\fill (0.3,0.5) circle[radius=0.3pt] node[above] { $D$};
\fill (0,0.25) circle[radius=0.3pt] node[left] { $E$};
\end{tikzpicture}
\end{center}

We first consider the problem of obtaining the largest possible weight.  The interior of the triangle $OAE$ is defined by
\[
\tfrac{1}{q}<\tfrac14-\tfrac{1}{2r}.
\]
The largest possible weight on the line $OE$ is $\tfrac{d-1}{2}$, while the point $A$ requires $\alpha=0$.  This leads to the upper bound
\[
\alpha<\tfrac{d-1}{2}-\tfrac{d-1}{r}
\]
for $(\tfrac1r,\tfrac1q)$ inside this triangle.

On the interior of the triangle $DCE$ we have 
\[
\tfrac{1}{q}>\tfrac14+\tfrac{2d-1}{2(2d-3)}\tfrac1r. 
\]
The largest possible weight is at the point $E$; the points $C$ and $D$ require $\alpha=0$. This leads to the upper bound
\[
\alpha < (d-1)[1-\tfrac2q]
\]
for $(\tfrac1r,\tfrac1q)$ inside this triangle.

On the interior of the triangle $ADE$ we have
\[
\tfrac14-\tfrac1{2r} < \tfrac1q < \min\{\tfrac14+\tfrac{2d-1}{2(2d-3)}\tfrac1r,\tfrac{2d-1}{4}-\tfrac{2d-1}{2}\tfrac{1}{r}\}. 
\]
Again, the largest possible weight is at the point $E$, with the line $AD$ requiring $\alpha=0$.  Interpolating between $E$ and points on $AD$ leads to the upper bound 
\[
\alpha < \tfrac{2d-1}{4}[1-\tfrac2r]-\tfrac1q. 
\]

Finally, on the interior of the triangle $ABD$ we have
\[
\tfrac{1}{q}>\tfrac{2d-1}{4}-\tfrac{2d-1}{2}\tfrac{1}{r}. 
\]
The points $A$ and $D$ require $\alpha=0$, while the largest weight at $B$ is $\alpha<-\tfrac12$. This again leads to the upper bound
\[
\alpha<\tfrac{2d-1}{4}[1-\tfrac2r]-\tfrac1q.
\]

We turn now to the problem of obtaining the smallest possible weight.  We first consider the triangle $ABO$.  The smallest weight at $A$ and $O$ is $\alpha=0$, while the smallest weight at $B$ is $\alpha>-\frac{d}{2}$.  Thus interpolating between $B$ and $AO$ leads to the lower bound $\alpha>-\tfrac{d}{q}$. 

We next consider the triangle $OBC$.  Interpolating between $O$ and $C$, we find the smallest weight on $OC$ is $\alpha=0$.  Thus, interpolating between $B$ and $OC$ leads to the lower bound $\alpha>-\tfrac{d}{r}$.
 
This completes the proof of the theorem.
\end{proof}

We also record the following dual/inhomogeneous Strichartz estimates.

\begin{proposition}[Dual Strichartz estimate]\label{DS}
Let $d\geq 2$ and let $(q,r,\alpha,s)$ satisfy \eqref{scalecond} and \eqref{alpharange} with $q,r\in(2,\infty)$.  Then the estimate
\[
\norm{\int_{\R}e^{-is\Delta} F(s) ds}_{\dot{H}^{-s}_x}\lesssim\| |x|^{-\alpha} F \|_{L_t^{q'} L_x^{r'}(\R\times\R^d)}
\]
holds for any spherically-symmetric function $F$.
\end{proposition}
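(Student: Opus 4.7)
The plan is to deduce the dual estimate from the homogeneous radial Strichartz estimate of Theorem~\ref{T:RS} by a standard $TT^*$-style duality argument, taking care to track the weight and the radiality. First I would fix a radial test function $f \in \dot H^s_x$ and pair it against $g := \int_{\R} e^{-is\Delta} F(s)\,ds$ via the $L^2$-inner product, writing
\[
\langle g, f\rangle = \int_{\R}\int_{\R^d} F(s,x)\,\overline{e^{is\Delta} f(x)}\,dx\,ds,
\]
which is legitimate since $(e^{-is\Delta})^* = e^{is\Delta}$.

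Next I would insert the weight $|x|^{\alpha}|x|^{-\alpha}$ in the integrand and apply H\"older in space and time to obtain
\[
|\langle g, f\rangle| \le \bigl\| |x|^{-\alpha} F\bigr\|_{L_t^{q'}L_x^{r'}(\R\times\R^d)} \bigl\| |x|^{\alpha} e^{is\Delta} f\bigr\|_{L_t^{q}L_x^{r}(\R\times\R^d)}.
\]
Since $f$ is radial and $(q,r,\alpha,s)$ satisfy \eqref{scalecond} and \eqref{alpharange} with $q,r\in(2,\infty)$, Theorem~\ref{T:RS} bounds the second factor by $\|f\|_{\dot H^s_x}$, yielding
\[
|\langle g, f\rangle| \lesssim \bigl\| |x|^{-\alpha} F\bigr\|_{L_t^{q'}L_x^{r'}} \|f\|_{\dot H^s_x}
\]
for every radial $f \in \dot H^s_x$.

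The final step is to recover the $\dot H^{-s}_x$ norm of $g$ from testing only against radial functions. Here I would use that $g$ is itself spherically symmetric: the operator $e^{-is\Delta}$ commutes with rotations, so $e^{-is\Delta}F(s,\cdot)$ is radial for each $s$, and the time integral preserves radiality. Consequently the pairing $\langle g, f\rangle$ is unchanged if $f$ is replaced by its spherical average, while the spherical average does not increase $\|f\|_{\dot H^s_x}$ (the Fourier-multiplier $|\nabla|^s$ commutes with rotations, and averaging over $\mathrm{SO}(d)$ is an $L^2$-contraction on $|\nabla|^s f$). Hence the dual norm $\|g\|_{\dot H^{-s}_x}$ is realized as a supremum over \emph{radial} test functions of $\dot H^s_x$-norm at most one, and the previous display gives exactly
\[
\|g\|_{\dot H^{-s}_x} \lesssim \bigl\| |x|^{-\alpha} F\bigr\|_{L_t^{q'}L_x^{r'}(\R\times\R^d)}.
\]

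There is no serious obstacle here: the argument is the usual duality/${TT^*}$ reduction, and the only subtlety is the radiality hypothesis, which enters in two places and is handled as above (applying Theorem~\ref{T:RS} to radial $f$, and observing that $g$ is radial so testing against radial $f$ suffices). The compatibility of the weights $|x|^{\alpha}$ and $|x|^{-\alpha}$ in the H\"older step is automatic because these give dual measures on $\R^d$.
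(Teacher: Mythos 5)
Your argument is correct and is exactly the standard $TT^*$/duality reduction that the paper intends (it states Proposition~\ref{DS} without proof as an immediate dual of Theorem~\ref{T:RS}): unweighted $L^2$ pairing, H\"older with the weights $|x|^{\alpha}|x|^{-\alpha}$, the homogeneous radial Strichartz bound, and the observation that the radial function $g$ may be tested against radial $f$ only, since spherical averaging is an $\dot H^s_x$-contraction. No gaps.
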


\begin{proposition}[Inhomogeneous Strichartz estimate] Fix $d\geq 2$.  Let $(q,r,\alpha,s)$ and $(\tilde q,\tilde r,\tilde\alpha,\tilde s)$ satisfy \eqref{scalecond} and \eqref{alpharange} with $q,r, \tilde q, \tilde r\in(2,\infty)$ and $\tilde s =-s$. Then for any $t_0\in\bar I\subset\R$, the estimate 
\[
\biggl\| \int_{t_0}^t e^{i(t-s)\Delta}F(s)\,ds\biggr\|_{L_t^\infty \dot H_x^s} + \biggl\||x|^\alpha \int_{t_0}^t e^{i(t-s)\Delta}F(s)\,ds\biggr\|_{L_t^q L_x^r} \lesssim \| |x|^{-\tilde \alpha}F\|_{L_t^{\tilde q'}L_x^{\tilde r'}}
\]
holds on $I\times\R^d$ for any spherically-symmetric function $F$. \end{proposition}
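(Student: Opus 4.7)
My plan is to handle the two pieces of the left-hand side separately: the $L_t^\infty \dot H_x^s$ bound via a direct duality argument (for which the Duhamel time truncation costs nothing), and the weighted space-time bound by first establishing the untruncated estimate over $\R$ and then passing to the truncated version via the Christ--Kiselev lemma. The only ingredients needed are Theorem~\ref{T:RS}, Proposition~\ref{DS}, and the unitarity of $e^{it\Delta}$ on $\dot H^s_x$.

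For the $L_t^\infty \dot H^s_x$ piece, I would use that $e^{it\Delta}$ is an isometry on $\dot H^s_x$ to rewrite
\[
\bigl\|\textstyle\int_{t_0}^t e^{i(t-s)\Delta}F(s)\,ds\bigr\|_{\dot H_x^s} = \bigl\|\textstyle\int_\R e^{-is\Delta}\bigl[\chi_{[t_0,t]}(s)F(s)\bigr]\,ds\bigr\|_{\dot H_x^s}.
\]
Since $\tilde s=-s$, the right-hand side is the $\dot H_x^{-\tilde s}$ norm of the same integral, so Proposition~\ref{DS} applied with the tilde parameters bounds it by $\||x|^{-\tilde\alpha}\chi_{[t_0,t]}F\|_{L_t^{\tilde q'}L_x^{\tilde r'}}\le\||x|^{-\tilde\alpha}F\|_{L_t^{\tilde q'}L_x^{\tilde r'}}$. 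Taking the supremum over $t\in I$ is then trivial, since the bound is uniform in the truncation.

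For the weighted $L_t^qL_x^r$ piece, I would first prove the untruncated estimate
\[
\bigl\||x|^\alpha\textstyle\int_\R e^{i(t-s)\Delta}F(s)\,ds\bigr\|_{L_t^qL_x^r(\R\times\R^d)}\lesssim \||x|^{-\tilde\alpha}F\|_{L_t^{\tilde q'}L_x^{\tilde r'}(\R\times\R^d)}.
\]
Factoring the integrand as $e^{it\Delta}\bigl(\int_\R e^{-is\Delta}F(s)\,ds\bigr)$ and applying Theorem~\ref{T:RS} with the parameters $(q,r,\alpha,s)$ produces the bound $\|\int_\R e^{-is\Delta}F(s)\,ds\|_{\dot H_x^s}$, and then Proposition~\ref{DS} applied with the tilde parameters (using $\dot H_x^s=\dot H_x^{-\tilde s}$ since $\tilde s=-s$) closes the chain. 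To pass from this untruncated estimate to the Duhamel integral truncated at $t_0$, I would invoke the Christ--Kiselev lemma; the required separation of temporal exponents is $q>\tilde q'$, which holds since $\tilde q>2$ implies $\tilde q'<2<q$.

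The main obstacle is not analytic but bookkeeping: one must verify that the two Strichartz endpoints can be chained with matching scaling and that the admissible weight conditions \eqref{alpharange} are respected at each step, as well as that the Christ--Kiselev exponent inequality is strict. Both are automatic under the hypotheses on $(q,r,\alpha,s)$ and $(\tilde q,\tilde r,\tilde \alpha,\tilde s)$, so the proof reduces to assembling the pieces in the correct order.
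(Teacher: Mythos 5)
Your argument is correct and is essentially the route the paper takes: the paper leaves this proposition unproved but runs exactly your chain (homogeneous Strichartz from Theorem~\ref{T:RS} composed with the dual estimate of Proposition~\ref{DS}, followed by the Christ--Kiselev lemma) in the proof of the frequency-localized version, Proposition~\ref{P:NStrichartz}. Your separate, duality-based treatment of the $L_t^\infty \dot H_x^s$ piece and the verification $\tilde q'<2<q$ for Christ--Kiselev are both accurate.
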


We conclude with the following frequency-localized inhomogeneous Strichartz estimate.
\begin{proposition}\label{P:NStrichartz} Fix $d\geq 2$. Let $(q,r,\alpha,s)$ and $(\tilde q,\tilde r,\tilde \alpha,\tilde s)$ satisfy \eqref{scalecond} and \eqref{alpharange} with $q,r, \tilde q, \tilde r\in(2,\infty)$.  Then for any $t_0\in\bar I\subset\R$, the estimate 
\begin{align*}
\biggl\| P_N\int_{t_0}^t e^{i(t-s)\Delta}F(s)\,ds\biggr\|_{L_t^\infty \dot H_x^s} + \biggl\| |x|^{\alpha} P_N&\int_{t_0}^t e^{i(t-s)\Delta}F(s)\,ds\biggr\|_{L_t^q L_x^r} \\
&\lesssim N^{s+\tilde s}\| |x|^{-\tilde\alpha} P_N F\|_{L_t^{\tilde q'}L_x^{\tilde r'}}
\end{align*}
holds on $I\times\R^d$ for any spherically-symmetric function $F$.  The operator $P_N$ can be replaced by $P_{>N}$ if $s+\tilde s\leq 0$ and by $P_{\leq N}$ if $s+\tilde s\geq 0$. 
\end{proposition}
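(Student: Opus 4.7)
The plan is to combine the homogeneous radial Strichartz estimate (Theorem~\ref{T:RS}) with its dual (Proposition~\ref{DS}), a Bernstein gain on frequency-localized functions, and the Christ--Kiselev lemma. The factor $N^{s+\tilde s}$ will be produced entirely by the Bernstein step.

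First I would prove the analogue with $\int_{t_0}^t$ replaced by $\int_\R$. Setting $\phi:=\int_\R e^{-is\Delta}P_N F(s)\,ds$, one has $\int_\R e^{i(t-s)\Delta}P_NF(s)\,ds = e^{it\Delta}\phi$, and since $e^{-is\Delta}$ preserves Fourier support, $\phi$ is itself frequency-localized to $|\xi|\sim N$. Theorem~\ref{T:RS} applied to $e^{it\Delta}\phi$ with parameters $(q,r,\alpha,s)$, together with the $\dot H^s_x$-isometry of $e^{it\Delta}$, gives
\[
\||x|^\alpha e^{it\Delta}\phi\|_{L_t^q L_x^r}+\|e^{it\Delta}\phi\|_{L_t^\infty\dot H^s_x}\lesssim \|\phi\|_{\dot H^s_x}.
\]
Writing $|\xi|^s=|\xi|^{s+\tilde s}\,|\xi|^{-\tilde s}$ and observing that $|\xi|^{s+\tilde s}\sim N^{s+\tilde s}$ on the support of $\hat\phi$, Bernstein's inequality yields $\|\phi\|_{\dot H^s_x}\lesssim N^{s+\tilde s}\|\phi\|_{\dot H^{-\tilde s}_x}$. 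Finally, the dual radial Strichartz estimate (Proposition~\ref{DS}) with parameters $(\tilde q,\tilde r,\tilde\alpha,\tilde s)$ bounds $\|\phi\|_{\dot H^{-\tilde s}_x}\lesssim \||x|^{-\tilde\alpha}P_NF\|_{L_t^{\tilde q'}L_x^{\tilde r'}}$. Composing these three inequalities yields the full-line version of the desired bound.

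To convert this into the retarded estimate, I would invoke the Christ--Kiselev lemma: since $q,\tilde q\in(2,\infty)$ we have $\tfrac1q+\tfrac1{\tilde q}<1$, equivalently $q>\tilde q'$, and the $L_t^\infty\dot H^s_x$ norm corresponds to $q=\infty$, trivially satisfying the Christ--Kiselev criterion. For the $P_{>N}$ and $P_{\le N}$ variants, the same argument runs verbatim with only one modification, in the Bernstein step: if $\hat\phi$ is supported in $|\xi|>N$ and $s+\tilde s\le 0$, then $|\xi|^{s+\tilde s}\le N^{s+\tilde s}$ on that support (and symmetrically for $|\xi|\le N$ when $s+\tilde s\ge 0$), so the bound $\|\phi\|_{\dot H^s_x}\le N^{s+\tilde s}\|\phi\|_{\dot H^{-\tilde s}_x}$ continues to hold.

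The most delicate point is the Bernstein gain, which both generates the $N^{s+\tilde s}$ factor and dictates the sign conditions required for the $P_{>N}$ and $P_{\le N}$ variants; once that observation is in place, the remaining ingredients (homogeneous/dual radial Strichartz and the Christ--Kiselev conversion) are standard.
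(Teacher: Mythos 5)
Your proposal is correct and follows essentially the same route as the paper: the untruncated estimate is obtained by composing the homogeneous radial Strichartz estimate with its dual, with the factor $N^{s+\tilde s}$ coming from Bernstein on the frequency-localized function, and the retarded estimate then follows from the Christ--Kiselev lemma since $\tilde q'<2<q$. The only cosmetic difference is that the paper treats the $L_t^\infty\dot H^s_x$ term separately via the unlocalized inhomogeneous estimate plus Bernstein, whereas you fold it into the same Christ--Kiselev argument; both are fine.
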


\begin{proof}
The first term can be handled by Bernstein and the previous inhomogeneous Strichartz estimate.  Hence, we focus on the second term. Let $\tilde P_N$ denote the fattened Littlewood--Paley operator.  By the Strichartz and the dual Strichartz estimates, we have
\[
\norm{|x|^\alpha \tilde{P}_N \int_\R e^{i(t-s)\Delta} G(s) ds}_{L^q_t L^r_x (\R \times \R^d)}
	\lesssim N^{s+\tilde{s}} \| |x|^{-\tilde{\alpha}} G\|_{L_t^{\tilde{q}'} L_x^{\tilde{r}'}(\R\times\R^d)}
\]
for any spherically-symmetric function $G$. Thus the desired bound follows from the Christ--Kiselev lemma (choosing $G=P_NF$). \end{proof}

\subsection{Function spaces}\label{S:FS} In this section we define some specific function spaces that will be used throughout the paper.  Recall that we are considering \eqref{nls} with $d\geq 3$ and $\tfrac{4}{d+1}<p<\tfrac{4}{d}$. 

First we remark that we may choose an exponent $q$ such that
\begin{equation}\label{q-conditions}
\max\{\tfrac{1}{p+1},\tfrac12-s_c,\tfrac{4}{p(d+2)}\} < \tfrac{2}{q} < \min\{1,\tfrac23\cdot\tfrac{1}{p+1}[\tfrac{5}{2}-s_c]\}
\end{equation}
and define
\[
\gamma=\tfrac{2q}{p} - (d+2)<0. 
\]
To see that we may choose $q$ as in \eqref{q-conditions}, we note that $p>\frac4{d+1}$ is equivalent to $\frac12-s_c<1$.  Also,
\[
\tfrac{2}{3}\cdot\tfrac{1}{p+1}[\tfrac52-s_c]-\tfrac{4}{p(d+2)} = \tfrac{(d-1)(4-p(d-2))}{3(d+2)p(1+p)}>0
\]
provided $p<\tfrac{4}{d-2}$, and 
\[
\tfrac{2}{3}\cdot\tfrac{1}{p+1}[\tfrac52-s_c]-(\tfrac12-s_c) = \tfrac{3(d-1)p^2+(d-5)p-4}{6p(p+1)}>0
\]
whenever $p>\tfrac{4}{d+1}$. Indeed, the larger root of the polynomial appearing in the numerator above is $\tfrac{5-d+\sqrt{-23+38d+d^2}}{6(d-1)}$, which is $\leq \tfrac{4}{d+1}$ (with equality when $d=3$).

\begin{remark} Some of the above restrictions are based on the requirements of the Strichartz estimates of \cite{Hidano} (cf. Proposition \ref{P:rad1} (ii)).
Even though Theorem~\ref{T:RS} provides slightly better estimates, we cannot improve on the restriction $p>\frac{4}{d+1}$.
\end{remark}

Recalling the notation in \eqref{weight-notation}, we now define
\begin{equation}\label{spaces}
\left.\begin{aligned}
S_I(u)&=\|u\|_{S(I)}  = \| u\|_{ L_{t,x}^{q;\gamma}(I\times\R^d)}, \\
\|F\|_{N(I)} & = \| F\|_{L_{t,x}^{\frac{q}{p+1};\gamma}(I\times\R^d)}.
\end{aligned}\qquad\qquad\right\}
\end{equation}
We write $S(I) = \{f:\|f\|_{S(I)}<\infty\}$, and similarly for $N(I)$.  With this notation,
\begin{equation}\label{wp}
\left.\begin{gathered}\!\!\!\!\biggl\| \int_{t_0}^t e^{i(t-s)\Delta}F(s)\,ds\biggr\|_{L_t^\infty \dot H_x^{s_c}(I\times\R^d)} + \biggl\|\int_{t_0}^t e^{i(t-s)\Delta}F(s)\,ds\biggr\|_{S(I)} \lesssim \|F\|_{N(I)}\\
\text{and}\qquad \bigl\| |u|^{p}u \bigr\|_{N(I)} \leq  \bigl\| u \bigr\|_{S(I)}^{p+1}.
\end{gathered}\ \right\}
\end{equation}

Finally, we remark that $|x|^{\gamma}$ is an $A_q$ weight, as $p<q$ implies $\gamma>-d$.

\subsection{Local well-posedness and stability}\label{S:LWP}  In this section we record local well-posedness and stability results for \eqref{nls}.  These results are valid in the range $d\geq 3$ and $\tfrac{4}{d+1}<p<\tfrac{4}{d}$.  As the arguments are standard, we simply state the results.  The key ingredient is the Strichartz estimate \eqref{wp}. 

\begin{definition}[Solution]\label{D:solution} A function $u:I\times\R^d\to\C$ is a \emph{solution} to \eqref{nls} if it belongs to $C_t \dot H_x^{s_c}(K\times\R^d)\cap S(K)$ for any compact $K\subset I$ and obeys the Duhamel formula
\[
u(t) = e^{it\Delta} u_0 - i\mu \int_0^t e^{i(t-s)\Delta}(|u|^p u)(s)\,ds
\]
for any $t\in I$.  We call $I$ the \emph{lifespan} of $u$.  We call $u$ a \emph{maximal-lifespan solution} if it cannot be extended to any strictly larger interval.  We call $u$ \emph{global} if $I=\R$. 
\end{definition}

\begin{theorem}[Local well-posedness]\label{T:LWP} For any radial $u_0\in \dot H^{s_c}$, there exists a unique maximal-lifespan solution $u:I_{\max}\times\R^d\to\C$ to \eqref{nls} with $u(0)=u_0$. 

If $T_{\max}<\infty$ then $\lim_{T\to T_{\max}}S_{[t_0,T]}(u)=\infty$ for any $t_0\in I_{\max}$.  An analogous result holds backward in time.

The solution scatters in $\dot H^{s_c}$ forward in time if and only if $S_{[t_0,T_{\max})}(u)<\infty$ for some $t_0\in I_{\max}$, in which case $T_{\max}=\infty$. An analogous result holds backward in time. 
\end{theorem}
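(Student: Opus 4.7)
The plan is to prove all three parts by the standard Strichartz-based contraction argument, built on the estimates packaged in \eqref{wp}. I would define the solution map
\[
\Phi(u)(t) := e^{it\Delta}u_0 - i\mu \int_0^t e^{i(t-s)\Delta}(|u|^p u)(s)\,ds
\]
and look for a fixed point in
\[
X_{I,\eta} := \{ u \in C_t \dot H^{s_c}_x(I\times\R^d)\cap S(I) : \|u\|_{S(I)} \le 2\eta\}
\]
for an interval $I\ni 0$ and small $\eta>0$. The first ingredient is the linear bound $\|e^{it\Delta}u_0\|_{S(\R)}\lesssim \|u_0\|_{\dot H^{s_c}_x}$: this is the radial Strichartz estimate of Theorem~\ref{T:RS} at the diagonal point $(q,q)$ with weight $\alpha=\gamma/q$; the scaling condition \eqref{scalecond} collapses to the definition of $\gamma$ from Section~\ref{S:FS}, and admissibility in the range \eqref{alpharange} is ensured by the choice \eqref{q-conditions}. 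Combining with \eqref{wp} yields
\[
\|\Phi(u)\|_{S(I)} \le \|e^{it\Delta}u_0\|_{S(I)} + C\|u\|_{S(I)}^{p+1},
\]
and the pointwise inequality $||u|^p u - |v|^p v| \lesssim (|u|^p+|v|^p)|u-v|$ together with H\"older in spacetime gives
\[
\|\Phi(u)-\Phi(v)\|_{S(I)} \le C(\|u\|_{S(I)}^p + \|v\|_{S(I)}^p)\|u-v\|_{S(I)}.
\]
Choosing $I$ so that $\|e^{it\Delta}u_0\|_{S(I)}\le\eta$ (possible by absolute continuity of the $L_{t,x}^{q;\gamma}$ norm) and $\eta$ small enough, the map $\Phi$ is a contraction on $X_{I,\eta}$, and the unique fixed point is the desired local solution on $I$; for small $\dot H^{s_c}_x$ data one may take $I=\R$ outright.

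For the maximal-lifespan solution I would glue these local solutions along the standard union-of-intervals procedure, with uniqueness on overlaps following from the Lipschitz bound above. For the blowup criterion, suppose $T_{\max}<\infty$ and $S_{[0,T_{\max})}(u)<\infty$; applying \eqref{wp} on $[0,T]$ and letting $T\uparrow T_{\max}$ gives a uniform $\dot H^{s_c}_x$ bound on $u(\tau)$, and by absolute continuity of the $S$-norm I can pick $\tau<T_{\max}$ for which $\|e^{i(t-\tau)\Delta}u(\tau)\|_{S([\tau,\tau+\delta])}$ lies below the contraction threshold for some $\delta$ with $\tau+\delta>T_{\max}$; then the local theory started at time $\tau$ extends $u$ past $T_{\max}$, contradicting maximality. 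For the scattering criterion, assume $S_{[0,\infty)}(u)<\infty$; by Duhamel and \eqref{wp},
\[
\| e^{-it\Delta}u(t) - e^{-is\Delta}u(s)\|_{\dot H^{s_c}_x} \lesssim \| |u|^p u\|_{N([s,t])} \le \|u\|_{S([s,t])}^{p+1},
\]
which tends to zero as $s,t\to\infty$; hence $e^{-it\Delta}u(t)$ converges in $\dot H^{s_c}_x$ to a limit $u_+$, with the analogous statement backward in time. The converse direction (scattering implies $S<\infty$) is immediate from the definition of scattering in $\dot H^{s_c}_x$ combined with the small-data portion of the construction applied on a large-time tail.

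Given the setup already in place, no serious obstacle is expected; the choice of the exponent $q$ in \eqref{q-conditions} was arranged precisely so that the pair $(S(I),N(I))$ closes under the Strichartz/nonlinear bounds in \eqref{wp}. The only mild subtlety is that for $p<1$ the map $u\mapsto|u|^p u$ is only H\"older in $u$, but the pointwise difference estimate above handles this without change to the argument.
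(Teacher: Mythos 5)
Your proposal is correct and follows exactly the route the paper intends: the paper omits the proof as standard, noting only that the key ingredient is the Strichartz/nonlinear estimate pair \eqref{wp}, and your contraction argument in $S(I)$ together with the diagonal radial Strichartz bound (whose admissibility is indeed what \eqref{q-conditions} and $\gamma>-d$ arrange) is precisely that standard argument. The one step stated too loosely is the extension past $T_{\max}$: absolute continuity alone does not give $\|e^{i(t-\tau)\Delta}u(\tau)\|_{S([\tau,\tau+\delta])}$ small with $\tau+\delta>T_{\max}$ uniformly as $\tau$ varies, since the profile $u(\tau)$ changes with $\tau$; the standard repair is to use the Cauchy property of $e^{-it\Delta}u(t)$ in $\dot H^{s_c}_x$ (which you already establish via Duhamel in the scattering step) to replace $u(\tau)$ by a fixed limiting profile, after splitting the interval at $T_{\max}$ and controlling the piece $[\tau,T_{\max})$ by Duhamel and the small tail $S$-norm.
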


We also have the standard stability result. 

\begin{theorem}[Stability]\label{T:stab} Let $I$ be an interval and $\tilde u:I\times\R^d\to\C$ be a radial solution to
\[
(i\partial_t+\Delta) \tilde u = \mu |\tilde u|^p \tilde u + e
\]
for some function $e:I\times\R^d\to\C$.  Suppose that 
\[
\|\tilde u\|_{L_t^\infty \dot H_x^{s_c}(I\times\R^d)} + \|\tilde u\|_{S(I)}\leq M.
\]
Fix $t_0\in I$ and $u_0\in \dot H^{s_c}_x$.  There exists $\eps_1=\eps_1(M)>0$ such that if
\[
\|e^{i(t-t_0)\Delta}(\tilde u(t_0) - u_0)\|_{S(I)} < \eps \qtq{and}
\|e\|_{N(I)}<\eps
\]
for $0<\eps<\eps_1$, then there exists a unique radial solution $u:I\times\R^d\to\C$ to \eqref{nls} with $u(t_0)=u_0$ satisfying $\|u-\tilde u\|_{S(I)}\lesssim \eps.$
\end{theorem}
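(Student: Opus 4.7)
The plan is to run the standard Strichartz-based bootstrap/iteration argument familiar from the mass- and energy-critical NLS literature (cf.\ \cite{KVClay}), adapted to the radial weighted function spaces $S(I)$ and $N(I)$ defined in \eqref{spaces}. The only two analytic ingredients are the radial Strichartz/Duhamel bound \eqref{wp} and the nonlinear difference estimate
\[
\bigl\||u|^p u - |\tilde u|^p \tilde u\bigr\|_{N(J)} \lesssim \bigl(\|u\|_{S(J)}^p + \|\tilde u\|_{S(J)}^p\bigr)\|u-\tilde u\|_{S(J)},
\]
which follows from the pointwise bound $\bigl||u|^p u - |\tilde u|^p \tilde u\bigr| \lesssim (|u|^p+|\tilde u|^p)|u-\tilde u|$ and H\"older in the weighted norm $L^{q;\gamma}_{t,x}$, exactly as in the derivation of the second line of \eqref{wp}.

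First, since $\|\tilde u\|_{S(I)}\le M$ and $q<\infty$, partition $I$ into finitely many consecutive subintervals $I_1,\dots,I_J$ with $\|\tilde u\|_{S(I_j)}\le \eta$, where $\eta=\eta(M)$ is a small parameter to be fixed and $J=J(M,\eta)<\infty$. Setting $w:=u-\tilde u$, the difference $w$ obeys
\[
(i\partial_t+\Delta)w=\mu\bigl(|u|^p u - |\tilde u|^p\tilde u\bigr)-e,
\]
so on any subinterval $I_j$ with left endpoint $t_j$, an application of \eqref{wp} combined with the difference estimate yields
\[
\|w\|_{S(I_j)}+\|w\|_{L_t^\infty\dot H_x^{s_c}(I_j)}\lesssim \|e^{i(t-t_j)\Delta}w(t_j)\|_{S(I_j)}+\|e\|_{N(I_j)} + \bigl(\eta^p+\|w\|_{S(I_j)}^p\bigr)\|w\|_{S(I_j)}.
\]
Choosing $\eta=\eta(M)$ small enough that the $\eta^p$ contribution is absorbed on the left, a routine continuity argument shows that whenever the first two terms on the right are both below some absolute threshold, $\|w\|_{S(I_j)}$ is bounded by a fixed multiple of their sum, and \eqref{wp} likewise controls $\|w(t_{j+1})\|_{\dot H^{s_c}_x}$.

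Next I would iterate. On $I_1$ the two driving terms are bounded by $\eps$ by hypothesis. To pass from $I_j$ to $I_{j+1}$, write
\[
e^{i(t-t_{j+1})\Delta}w(t_{j+1}) = e^{i(t-t_j)\Delta}w(t_j) - i\mu\!\int_{t_j}^{t_{j+1}} \!\!e^{i(t-s)\Delta}\bigl(|u|^pu-|\tilde u|^p\tilde u\bigr)(s)\,ds + i\!\int_{t_j}^{t_{j+1}}\!\! e^{i(t-s)\Delta}e(s)\,ds,
\]
and take the $S(I_{j+1})$ norm; \eqref{wp}, the difference estimate, and the single-step bootstrap together show that the driving data on $I_{j+1}$ is at most a fixed multiple $C$ of its analogue on $I_j$. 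Thus after $J$ steps the driving data is bounded by $C^{J}\eps$. Choosing $\eps_1=\eps_1(M)$ so small that $C^{J(M)}\eps_1$ remains below the bootstrap threshold closes the induction and gives $\|w\|_{S(I)}\lesssim_M\eps$. Existence of $u$ on each piece then follows from the usual contraction argument in $S(I_j)$ with initial data $u(t_j)$ (noting that $\dot H^{s_c}$ control of $w(t_j)$ implies $u(t_j)\in\dot H^{s_c}_x$), and the pieces are glued using these endpoint $\dot H^{s_c}$ bounds; uniqueness is an immediate consequence of the same difference estimate applied to two solutions.

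The main obstacle is precisely this iteration: the bootstrap is only small on each individual $I_j$, so one must track how the initial-data mismatch propagates from one piece to the next, and verify that the geometrically amplified constant $C^{J(M)}\eps_1$ still lies below the small-data threshold on every piece. This forces $\eps_1$ to depend on $M$ (through the number $J(M)$ of subintervals) rather than being absolute, which is exactly why the statement of Theorem~\ref{T:stab} allows this dependence.
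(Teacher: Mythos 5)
Your argument is correct and is exactly the standard stability scheme the paper has in mind: it explicitly omits the proof as ``standard,'' citing \eqref{wp} as the key ingredient, and your proof is precisely that argument (subinterval decomposition with $\|\tilde u\|_{S(I_j)}\le\eta(M)$, the H\"older-based difference estimate in the weighted norms, bootstrap, and geometric propagation of the free-evolution mismatch, with $\eps_1$ depending on $M$ through $J(M)$); it is also worth noting that because $S(I)$ and $N(I)$ carry no derivatives, the pointwise difference bound suffices even for $p<1$, so no fractional chain rule issues arise. The only slight imprecision is that the $\|w\|_{L_t^\infty\dot H^{s_c}(I_j)}$ term on the left of your one-step estimate additionally requires $\|w(t_j)\|_{\dot H^{s_c}}$ on the right, but this is harmless since the $\dot H^{s_c}$ control is needed only qualitatively (finiteness, for gluing), not quantitatively, for the stated conclusion.
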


Finally, applying Theorem~\ref{T:stab} with $\tilde u(t)=e^{it\Delta}u_0$ leads to the following small-data scattering result.

\begin{corollary}\label{C:small-data} Let $u_0\in \dot H^{s_c}(\R^d)$ be a radial function.  If $\|u_0\|_{\dot H^{s_c}}$ is sufficiently small, then the solution $u$ to \eqref{nls} with $u(0)=u_0$ is global and $\|u\|_{S(\R)}<\infty$.  In particular, $u$ scatters in both time directions.  
\end{corollary}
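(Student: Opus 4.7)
The plan is to apply the stability theorem (Theorem~\ref{T:stab}) with the candidate approximate solution $\tilde u(t) := e^{it\Delta}u_0$.  Since $\tilde u$ solves the free Schr\"odinger equation, it satisfies the perturbed equation with forcing $e := -\mu|\tilde u|^p\tilde u$, so the whole game is to verify the smallness hypotheses of Theorem~\ref{T:stab} when $\|u_0\|_{\dot H^{s_c}_x}$ is small.

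The first step is to control $\tilde u$ and $e$ in the critical norms.  Unitarity of $e^{it\Delta}$ on $\dot H^{s_c}_x$ gives $\|\tilde u\|_{L_t^\infty \dot H^{s_c}_x}=\|u_0\|_{\dot H^{s_c}_x}$.  The exponents fixed in \eqref{q-conditions} were chosen precisely so that the scattering space $S(\R)$ is an admissible radial Strichartz space; applying Theorem~\ref{T:RS} (or equivalently Proposition~\ref{P:rad1}(ii)) at the scaling $(q,r,\alpha,s)=(q,q,\gamma/q,s_c)$ yields
\[
\|\tilde u\|_{S(\R)}\lesssim \|u_0\|_{\dot H^{s_c}_x}.
\]
The nonlinear estimate built into \eqref{wp} then gives
\[
\|e\|_{N(\R)}=\bigl\||\tilde u|^p\tilde u\bigr\|_{N(\R)}\leq \|\tilde u\|_{S(\R)}^{p+1}\lesssim \|u_0\|_{\dot H^{s_c}_x}^{p+1}.
\]

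Now I would fix $M:=1$, let $\eps_1=\eps_1(M)$ be the threshold produced by Theorem~\ref{T:stab}, and take $\|u_0\|_{\dot H^{s_c}_x}$ small enough that both $\|\tilde u\|_{L_t^\infty \dot H^{s_c}_x}+\|\tilde u\|_{S(\R)}\leq M$ and $\|e\|_{N(\R)}<\eps_1$ hold.  Choosing $t_0=0$ makes the difference $\tilde u(t_0)-u_0$ vanish identically, so the first smallness hypothesis of Theorem~\ref{T:stab} is trivial.  The stability theorem then produces a radial solution $u:\R\times\R^d\to\C$ to \eqref{nls} with $u(0)=u_0$ and $\|u-\tilde u\|_{S(\R)}\lesssim \eps$; in particular $\|u\|_{S(\R)}<\infty$.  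Globality ($I_{\max}=\R$) and scattering in both time directions then follow from the scattering criterion in Theorem~\ref{T:LWP}, and the uniqueness clause of Theorem~\ref{T:LWP} identifies $u$ with the maximal-lifespan solution starting from $u_0$.

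I expect no genuine obstacle here: all of the analytic difficulty has already been absorbed into Theorem~\ref{T:stab} and into the careful engineering of the spaces $S(I)$, $N(I)$ in Section~\ref{S:FS}, which was done so that the combined linear and nonlinear estimates \eqref{wp} hold with the powers needed for a contraction-type argument.  The only mild point to bear in mind is the order of quantifiers in the stability theorem — $M$ must be chosen \emph{before} $\eps_1$ is provided — which is why the argument fixes $M=1$ first and then takes $\|u_0\|_{\dot H^{s_c}_x}$ small in terms of that $\eps_1$.
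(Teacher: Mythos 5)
Your argument is correct and is exactly the route the paper takes: the text introduces Corollary~\ref{C:small-data} precisely by "applying Theorem~\ref{T:stab} with $\tilde u(t)=e^{it\Delta}u_0$," and your use of the radial Strichartz estimate to bound $\|\tilde u\|_{S(\R)}$ together with \eqref{wp} to bound the error term fills in the standard details in the intended way. The remark about fixing $M$ before $\eps_1$ is a sensible point of care but poses no difficulty.
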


\subsection{Scattering in weighted spaces}\label{S:weighted}

As mentioned in the introduction, Theorem~\ref{T:main} yields an improvement to our previous results in \cite{KMMV} for a range of nonlinearities in the radial defocusing case.  Specifically, in certain cases we can show that solutions with radial data in the weighted space
$$
\F \dot H^{|s_c|}=\{f:|x|^{|s_c|}f\in L^2\}
$$
that remain bounded in $\dot H^{s_c}$ actually scatter in the stronger norm $\F \dot H^{|s_c|}$, in the sense that $e^{-it\Delta}u(t)$ converges in $\F\dot H^{|s_c|}$ as $t\to\pm\infty$.  Note that the embedding $\F \dot H^{|s_c|}\hookrightarrow \dot H^{s_c}$ for $-\frac d2<s_c\leq 0$ follows from Hardy's inequality and Plancherel.

As a simple example, let us demonstrate this fact when $d=3$ and $p_0(3)<p<\tfrac43$.  We suppose that $u_0\in \F\dot H^{|s_c|}$ is radial, and that the solution $u$ to \eqref{nls} with $u(0)=u_0$ remains uniformly bounded in $\dot H^{s_c}$ throughout its lifespan.  Theorem~\ref{T:main} then implies that $u$ is global, with $u\in S(\R)$.  A further application of Strichartz (Theorem~\ref{T:RS}) shows that $u$ also belongs to the critical space $L_t^{2p-}L_x^{3p+}(\R\times\R^3)$.  To use this norm without weights requires only $p>\tfrac{16}{15}$ (see \eqref{alpharange}). 

We will show that $\{e^{-it\Delta}u(t)\}$ is Cauchy in $\F\dot H^{|s_c|}$ as $t\to\infty$.  The case $t\to-\infty$ is similar.  To this end, we recall the operator 
\[
J^s(t) = e^{it\Delta} |x|^s e^{-it\Delta} = e^{i|x|^2/4t}(-4t^2\Delta)^{s/2} e^{-i|x|^2/4t},
\]
which featured prominently in \cite{KMMV}.  We will first show that $J^{|s_c|}u$ remains bounded in $L^2$.  To see this, we use the Strichartz estimates in \cite{KMMV} (see e.g. Proposition~2.6 therein) and the Duhamel formula
\[
J^{|s_c|}(t)u(t) = e^{i(t-t_0)\Delta}J^{|s_c|}(t_0)u(t_0)-i\int_{t_0}^t e^{i(t-s)\Delta}J^{|s_c|}(s)\bigl(|u|^p u\bigr)(s)\,ds.
\]
Writing $\tilde u(t)=e^{-i|x|^2/4t}u(t)$, we get the following estimate on any spacetime slab $(t_0,t_1)\times\R^3$:
\begin{align*}
\| J^{|s_c|}u \|_{L_t^\infty L_x^2} & \lesssim \|e^{-it_0\Delta}u(t_0)\|_{\F\dot H^{|s_c|}} +  \| |t|^{|s_c|}|\nabla|^{|s_c|}(|\tilde u|^p \tilde u)\|_{L_t^{2-,2}L_x^{\frac65+}} \\
& \lesssim \|J^{|s_c|}u(t_0)\|_{L^2} + \|u\|_{L_t^{2p-,2p} L_x^{3p+}}^p \| |t|^{|s_c|}|\nabla|^{|s_c|}\tilde u\|_{L_t^\infty L_x^2} \\
& \lesssim \|J^{|s_c|}u(t_0)\|_{L^2}+ \|u\|_{L_t^{2p-} L_x^{3p+}}^p \| J^{|s_c|}u\|_{L_t^\infty L_x^2},
\end{align*}
where $L_t^{a,b}$ denotes the Lorentz space.  Combining this estimate with a bootstrap argument (splitting $\R$ into finitely many intervals on which the $L_t^{2p-}L_x^{3p+}$-norm is small) yields boundedness of $J^{|s_c|}u$ in $L^2$.  Applying the same estimates once more, we now get
\[
\|e^{-it\Delta}u(t)-e^{-is\Delta}u(s)\|_{\F\dot H^{|s_c|}}\lesssim \|u\|_{L_t^{2p-}L_x^{3p+}([s,t]\times\R^3)}^p\| J^{|s_c|}u\|_{L_t^\infty L_x^2} \to 0
\]
as $s,t\to\infty$, which yields the desired result.  

\section{Concentration compactness}\label{S:reduction}

In this section we discuss the proof of Theorem~\ref{T:reduction}.  We denote the $\dot H^{s_c}$-preserving dilation by 
\[
 [D(\lambda)f](x) = \lambda^{-\frac2p}f(\tfrac{x}{\lambda})
\]
and recall the notation
\[
\|f\|_{S(I)}= \| f\|_{L_{t,x}^{q;\gamma}(I\times\R^d)},
\]
with $q$ and $\gamma$ defined in Section~\ref{S:FS}. 

The proof of Theorem~\ref{T:reduction} relies heavily on the following linear profile decomposition.  


\begin{proposition}[Linear profile decomposition]\label{P:LPD} Let $\{u_n\}\subset \dot H^{s_c}$ be a bounded sequence of radial functions. There exist radial $\{\psi_j\}\subset \dot H^{s_c}$, radial $\{R_n^J\}\subset \dot H^{s_c}$, $\{N_n^j\}\subset 2^{\mathbb{Z}}$ and $\{t_n^j\}\subset\R$ such that (passing to a subsequence) the following decomposition holds for each $J\geq 1$:
\[
u_n=\sum_{j=1}^J D(N_n^j)e^{it_n^j\Delta} \psi^j + R_n^J.
\]
Furthermore, this decomposition has the following properties:

\noindent (1) The parameters are asymptotically orthogonal: 
\[
\lim_{n\to\infty} \tfrac{N_n^j}{N_n^k} + \tfrac{N_n^k}{N_n^j} + \bigl| t_n^j - (\tfrac{N_n^j}{N_n^k})^2t_n^k\bigr| = \infty\qtq{for}j\neq k.
\]
\noindent (2) There is asymptotic decoupling of the $\dot H^{s_c}$ norm: for each $J\geq 1$,
\[
\|u_n\|_{\dot H^{s_c}}^2 = \sum_{j=1}^J \|\psi^j\|_{\dot H^{s_c}}^2 + \|R_n^J\|_{\dot H^{s_c}}^2+ o(1)\qtq{as}n\to\infty.
\]
\noindent (3) The remainders vanish in the following sense:
\[
\limsup_{J\to\infty}\limsup_{N\to\infty} \|e^{it\Delta} R_n^J\|_{S(\R)}=0.
\]
\end{proposition}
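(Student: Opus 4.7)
The plan is to run the Keraani--Bahouri--G\'erard concentration-compactness scheme, adapted to the radial negative-regularity setting. The argument has three main components: a refined radial Strichartz inequality playing the role of the inverse Strichartz tool, an inductive profile extraction, and a verification of asymptotic orthogonality together with Pythagorean decoupling.

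\textbf{Refined Strichartz (the engine).} I aim to prove a bound of the form
\[
\|e^{it\Delta} f\|_{S(\R)} \lesssim \|f\|_{\dot H^{s_c}}^{1-\theta} \sup_{N\in 2^\Z,\ t\in\R} \bigl\| |x|^{\alpha_0} e^{it\Delta} P_N f \bigr\|_{L^{r_0}_x}^{\theta}
\]
for spherically-symmetric $f$, with some $\theta\in(0,1)$ and a weighted pair $(r_0,\alpha_0)$ lying strictly inside the range \eqref{alpharange} of Theorem~\ref{T:RS}. My plan is to decompose $\|e^{it\Delta} f\|^q_{S(\R)}$ via Littlewood--Paley, apply Corollary~\ref{C:BS} to pick up a small factor whenever two frequencies are well separated, and then interpolate between two admissible radial Strichartz pairs so as to land at the weighted scattering exponent $(q,\gamma)$ of Section~\ref{S:FS}. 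The $A_q$-property of $|x|^\gamma$ noted at the end of that section gives access to the Littlewood--Paley square function in the target norm, as needed to recombine the frequency pieces.

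\textbf{Profile extraction and iteration.} Given a bounded radial sequence $\{u_n\}\subset \dot H^{s_c}$, if $\|e^{it\Delta} u_n\|_{S(\R)}\to 0$ the proposition is trivial. Otherwise, the refined Strichartz supplies scales $N_n^1\in 2^\Z$ and times $t_n^1\in\R$ for which the rescaled sequence
\[
g_n^1 := D(N_n^1)^{-1} e^{-it_n^1\Delta} u_n
\]
concentrates a nontrivial fraction of $\dot H^{s_c}$-mass on a fixed, compactly frequency-supported test function. By weak compactness in $\dot H^{s_c}$ and a diagonal argument, a subsequence of $g_n^1$ converges weakly to a nonzero $\psi^1\in \dot H^{s_c}$. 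Setting $R_n^1 := u_n - D(N_n^1) e^{it_n^1\Delta}\psi^1$, one iterates on $R_n^1$ to produce all $(N_n^j, t_n^j, \psi^j)$; at each step the newly extracted profile absorbs a definite fraction of the remaining scattering norm, which forces $\|\psi^j\|_{\dot H^{s_c}}\to 0$ as $j\to\infty$.

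\textbf{Orthogonality, decoupling, and the remainder.} Asymptotic orthogonality (property~(1)) is obtained in the usual way: if the parameters $(N_n^j, t_n^j)$ and $(N_n^k, t_n^k)$ with $j<k$ failed to separate in the sense stated, then the unitary rescaling and time-translation linking them would have a strong limit, contradicting $D(N_n^k)^{-1} e^{-it_n^k\Delta} R_n^{k-1}\rightharpoonup \psi^k\ne 0$ while the analogous rescaled expression of the $j$-th remainder tends weakly to $0$. Pythagorean decoupling (property~(2)) is then an expansion of $\|u_n\|_{\dot H^{s_c}}^2$ using orthogonality of the rescaled profiles in $\dot H^{s_c}$. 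Property~(3) follows by feeding $R_n^J$ back through the refined Strichartz: as the extraction exhausts every concentration of $\dot H^{s_c}$-norm above a shrinking threshold, the supremum on the right of the refined estimate tends to zero as $J\to\infty$, uniformly in $n$ after passing to the diagonal subsequence.

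\textbf{Main obstacle.} I expect the refined Strichartz inequality to be the delicate step. Because $s_c<0$ and the scattering norm $L^{q;\gamma}_{t,x}$ carries a weight, the Bourgain and B\'egout--Vargas refinements do not apply off the shelf. One must combine the bilinear Corollary~\ref{C:BS} with the weighted radial estimates of Theorem~\ref{T:RS} at two carefully chosen interior admissible pairs and check that the interpolation lands exactly at $(q,\gamma)$; the inequalities in \eqref{q-conditions} should be precisely what makes this balancing act succeed, which is also consistent with the paper's restriction to $p>\tfrac{4}{d+1}$.
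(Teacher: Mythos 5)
Your overall scheme (refined Strichartz $\to$ inverse Strichartz $\to$ iterate $\to$ orthogonality/decoupling) is the standard route and matches the paper's at the level of strategy, but there is one genuine gap that needs to be addressed, and one place where you are taking a different (and more complicated) path than necessary.

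\textbf{The gap: the spatial translation parameter.} Notice that Proposition~\ref{P:LPD} contains \emph{no} spatial translations $x_n^j$: the profiles are of the form $D(N_n^j)e^{it_n^j\Delta}\psi^j$ only. In your profile-extraction step you produce a scale $N_n^1$ and a time $t_n^1$, but any correct inverse-Strichartz argument in this setting will also identify a concentration \emph{point} $x_n^1\in\R^d$: after a H\"older step the scattering norm is dominated by an $L^\infty_{t,x}$ norm, and the point $(t_n^1,x_n^1)$ where that supremum is nearly attained defines your modulation parameters. If you leave that $x_n^1$ in, the whole proposition breaks, because the scattering norm $\|\cdot\|_{S(\R)} = \||x|^{\gamma/q}\cdot\|_{L^q_{t,x}}$ is \emph{not} translation invariant (the weight $|x|^\gamma$ is centered at the origin). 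The paper's key observation, and the only place radiality is really used in this step, is that radial Sobolev embedding forces $N_n|x_n|$ to be bounded: combining
\[
|x_n|^{\frac{d-1}{2}}|(e^{it_n\Delta}P_{N_n}u_n)(x_n)| \lesssim \|P_{N_n}u_n\|_{L_x^2}^{\frac12}\|\nabla P_{N_n}u_n\|_{L_x^2}^{\frac12} \lesssim N_n^{\frac12-s_c}A
\]
with the lower bound from the concentration gives $N_n|x_n|\lesssim(A\beta^{-1})^{2/(d-1)}$; after extracting a subsequence, $N_nx_n$ converges and (modulo a harmless translation of $\psi$) one can take $x_n\equiv 0$. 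Your write-up never confronts this, and without it the decomposition has the wrong form and property~(3) cannot be established.

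\textbf{A non-essential but real detour: you do not need bilinear Strichartz for the refined estimate.} Your plan leans on Corollary~\ref{C:BS} to gain the small factor at separated frequencies and on a fixed-time formulation $\sup_{N,t}\||x|^{\alpha_0}e^{it\Delta}P_Nf\|_{L^{r_0}_x}$. The paper instead proves the much simpler frequency-localized spacetime form
\[
\|e^{it\Delta}f\|_{S(\R)}\ \lesssim\ \|f\|_{\dot H^{s_c}}^{2/q}\Bigl[\sup_{N}\|e^{it\Delta}P_Nf\|_{S(\R)}\Bigr]^{1-2/q},
\]
and its proof is purely elementary: the $A_q$ property of $|x|^\gamma$ gives the Littlewood--Paley square function in $S(\R)$, one multilinearizes into $k$ frequency slots with $2(k-1)<q\le 2k$, and then H\"older together with Strichartz and Bernstein at neighbouring exponents $r_1>q>r_k$ (unweighted in frequency, i.e., no bilinear estimate) produces a geometric gain $(N_1/N_k)^{\frac2p(1-q/r_1)}$ that lets one close via Cauchy--Schwarz. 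The bilinear estimate is reserved in the paper for the self-similar analysis; using it here is not incorrect, but it over-engineers a step that already has a clean elementary proof, and it obscures the fact that the only structural input needed at this stage is the $A_q$ weight and Bernstein.

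Your treatment of orthogonality, Pythagorean decoupling, and the vanishing of remainders is the usual boilerplate and is fine once the spatial translation issue is repaired; in particular, with $x_n\equiv 0$ the symmetry group consists only of dilations and time translations, both of which act unitarily on $\dot H^{s_c}_x$ and preserve $S(\R)$, so the decoupling argument proceeds exactly as in the standard references.
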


The proof of Proposition~\ref{P:LPD} follows well-known arguments detailed, for example, in \cite{KVClay}. Hence, we emphasize only the main points and the details that need to be modified in our setting.  The proof proceeds by induction, successively removing packets of concentration from the sequence by appealing to the following proposition.

\begin{proposition}[Inverse Strichartz]\label{P:IS} Assume $\{u_n\}\subset \dot H^{s_c}$ is a sequence of radial functions such that
\[
\|u_n\|_{\dot H^{s_c}} \leq A<\infty \qtq{and}\|e^{it\Delta} u_n\|_{S(\R)}\geq \eps >0. 
\]
Then there exists $\beta=\beta(\eps,A)$, $\{N_n\}\subset 2^{\mathbb{Z}}$, and $\{t_n\}\subset\R$ such that (passing to a subsequence)
\begin{align*}
& [D(N_n)e^{it_n\Delta}]^{-1}u_n\rightharpoonup \psi\qtq{weakly in}\dot H^{s_c}\qtq{as}n\to\infty, \qtq{with} \|\psi\|_{\dot H^{s_c}}\geq\beta. 
\end{align*}
\end{proposition}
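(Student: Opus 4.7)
The argument follows the standard inverse-Strichartz extraction scheme, adapted to the radial negative-regularity setting. Since all $u_n$ are radial, concentration occurs only on spheres centered at the origin, so no spatial translation parameter is needed: the symmetries captured by $D(N_n)$ and the time translation $e^{it_n\Delta}$ suffice. The proof splits into three pieces: (i) a refined Strichartz inequality that isolates a dyadic frequency scale $N_n$; (ii) a radial dispersive argument that isolates a time $t_n$; and (iii) passing to the weak limit.

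\textbf{Step 1: refined Strichartz.} The goal is an inequality of the form
\[
\|e^{it\Delta}f\|_{S(\R)} \lesssim \|f\|_{\dot H^{s_c}}^{1-\theta}\,\Bigl(\sup_{N \in 2^{\Z}}\,N^{|s_c|}\|e^{it\Delta}P_N f\|_{Y}\Bigr)^{\theta}
\]
for some $\theta \in (0,1)$ and a weighted Strichartz-admissible space $Y$ that is strictly stronger, at each dyadic frequency scale, than $L_{t,x}^{q;\gamma}$. Because $|x|^\gamma$ is an $A_q$ weight (as observed at the end of Section~\ref{S:FS}), the $S(\R)$-norm is equivalent to its Littlewood--Paley square function
\[
\|e^{it\Delta}f\|_{S(\R)} \sim \Bigl\|\Bigl(\sum_{N} |e^{it\Delta}P_N f|^2\Bigr)^{1/2}\Bigr\|_{L^{q;\gamma}_{t,x}}.
\]
Combining H\"older, the almost-orthogonality identity $\sum_N N^{2s_c}\|P_N f\|_{L_x^2}^2 \sim \|f\|_{\dot H^{s_c}}^2$, and a Strichartz estimate at a different interior point of the admissible range in Theorem~\ref{T:RS}, one interpolates to obtain the refinement, with a geometric decay in dyadic scales ensuring summability of the off-diagonal interactions.

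\textbf{Step 2: extracting $N_n$, $t_n$, and the weak limit.} The refined inequality combined with the two hypotheses forces some $N_n \in 2^{\Z}$ for which $N_n^{|s_c|}\|e^{it\Delta}P_{N_n}u_n\|_{Y} \gtrsim c(\eps,A)$. Interpolating a weighted $L^\infty$-type dispersive bound (available from Lemma~\ref{L:WDE} or Proposition~\ref{P:rad1}(i)) against the $\dot H^{s_c}$-bound via Bernstein then produces a time $t_n \in \R$ at which a suitable weighted pointwise amplitude of $e^{it_n\Delta}P_{N_n}u_n$ is bounded below. Frequency localization at scale $N_n$ together with radial symmetry force any point of concentration to lie at distance $\lesssim N_n^{-1}$ from the origin, so after rescaling via $D(N_n)$ the concentration sits near the origin at unit scale. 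Setting $v_n := [D(N_n)e^{it_n\Delta}]^{-1}u_n$ we obtain a sequence bounded in $\dot H^{s_c}$, hence a weakly convergent subsequence $v_n \rightharpoonup \psi$. Testing against a suitably normalized radial bump $\varphi \in \dot H^{-s_c}$ concentrated at unit frequency and unit physical scale converts the pointwise lower bound into $|\langle v_n,\varphi\rangle| \gtrsim c(\eps,A)$, and duality yields $\|\psi\|_{\dot H^{s_c}} \ge \beta(\eps,A)$.

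\textbf{Main obstacle.} The only genuinely nonstandard ingredient is the refined Strichartz inequality in the weighted mixed-norm space $L^{q;\gamma}_{t,x}$. Classical Bourgain--Keraani refinements are formulated in unweighted Lebesgue norms, so one must verify both that the Littlewood--Paley square function identity survives in this weighted setting (it does, by the $A_q$-weight property of $|x|^\gamma$) and that a strictly stronger auxiliary Strichartz anchor is available inside the admissible range provided by Theorem~\ref{T:RS}. Since $q$ is chosen near the boundary of that range by \eqref{q-conditions}, this selection is tight but remains feasible; with the refined Strichartz in hand, the remaining extraction and weak-limit steps follow the template in \cite{KVClay}.
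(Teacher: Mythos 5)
Your plan matches the paper's proof in all essentials: an improved Strichartz estimate proved via the $A_q$ square-function equivalence, H\"older, Bernstein, and Cauchy--Schwarz to isolate a dyadic scale $N_n$; then H\"older against $L^\infty_{t,x}$ plus Bernstein to extract a space-time concentration point; radial Sobolev embedding to confine that point to distance $\lesssim_{\eps,A} N_n^{-1}$ from the origin; and finally Banach--Alaoglu with testing against a unit-scale bump ($P_1\delta$ in the paper) to get the nontrivial weak limit. The only cosmetic difference is that the paper's refinement (Lemma~\ref{L:IS}) keeps the single-frequency factor in the same norm $S(\R)$ rather than in a strictly stronger auxiliary space $Y$, which slightly simplifies Step 1 but changes nothing downstream.
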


In turn, the starting point for Proposition~\ref{P:IS} is the following improved Strichartz estimate, which firstly identifies a scale at which concentration occurs. 

\begin{lemma}[Improved Strichartz]\label{L:IS} For any radial $f\in \dot H^{s_c}$, 
\[
\|e^{it\Delta} f\|_{S(\R)} \lesssim \|f\|_{\dot H^{s_c}}^{\frac2q}\biggl[\sup_{N\in 2^{\mathbb{Z}}} \| e^{it\Delta} P_N f\|_{S(\R)}\biggr]^{1-\frac2q}. 
\]
\end{lemma}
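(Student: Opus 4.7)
My plan is to combine the weighted Littlewood--Paley square function with a refined-Sobolev-type argument in the style of Bahouri--Gerard / Keraani.

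Since $|x|^\gamma$ is an $A_q$ weight (cf.\ Section~\ref{S:FS}), the weighted Littlewood--Paley theorem gives
\[
\|e^{it\Delta}f\|_{S(\R)}\;\sim\;\biggl\|\Bigl(\sum_{N}|F_N|^2\Bigr)^{1/2}\biggr\|_{L^q_{t,x;\gamma}},\qquad F_N:=e^{it\Delta}P_N f,
\]
while standard (radial) Strichartz applied to each piece (Theorem~\ref{T:RS}), combined with the $\dot H^{s_c}$ square function equivalence, yields $\sum_N\|F_N\|_{S(\R)}^2\lesssim \|f\|_{\dot H^{s_c}}^2$. In particular, using $|F|^q = |F|^{q-2}\cdot|F|^2$, substituting the square function for $|F|^2$, and applying H\"older with the conjugate pair $(q/2,\,q/(q-2))$, one obtains the one-sided bound $\|F\|_{S(\R)}^2 \lesssim \sum_N \|F_N\|_{S(\R)}^2$; this alone recovers only the unrefined Strichartz estimate $\|F\|_{S(\R)}\lesssim\|f\|_{\dot H^{s_c}}$.

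To extract the supremum $M:=\sup_N\|F_N\|_{S(\R)}$, note that the target inequality is equivalent, after raising to the $q$-th power, to $\|F\|_{S(\R)}^q \lesssim M^{q-2}\|f\|_{\dot H^{s_c}}^2$. The elementary $\ell^q$--$\ell^\infty$--$\ell^2$ interpolation $\sum_N a_N^q\le (\sup_N a_N)^{q-2}\sum_N a_N^2$ applied with $a_N=\|F_N\|_{S(\R)}$ already produces $\sum_N\|F_N\|_{S(\R)}^q \lesssim M^{q-2}\|f\|_{\dot H^{s_c}}^2$, so the crux is to show that $\|F\|_{S(\R)}^q$ is controlled by this quantity. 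I would approach this via a Bahouri--Gerard style distribution-function argument: decompose $\|F\|_{S(\R)}^q$ through its layer-cake integral against $|x|^\gamma\,dx\,dt$, and at each height $\lambda$ split the frequencies at a threshold $N^\ast(\lambda)$ so that the high-frequency part is controlled by $(\sum_N\|F_N\|_{S(\R)}^2)^{1/2}$ (via Cauchy--Schwarz in $N$, using square-integrability in dyadic scale) and the low-frequency part is dominated pointwise by $M$ (via the weighted Bernstein bound of Lemma~\ref{L:WB} to transfer the pointwise supremum to the $L^q_{t,x;\gamma}$ supremum). Optimizing in $N^\ast(\lambda)$ and integrating in $\lambda$ then yields the claimed refinement.

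I expect the main obstacle to be executing this Bahouri--Gerard argument in the $A_q$-weighted space $L^q_{t,x;\gamma}$: the pointwise Bernstein step and the summation over dyadic frequencies must be performed against the weight $|x|^\gamma$, and the threshold $N^\ast(\lambda)$ must be chosen to balance the two contributions uniformly in $\lambda$. The $A_q$-property of the weight together with the weighted Bernstein inequality (Lemma~\ref{L:WB}) suggest these technicalities can be handled, but care is needed. As an alternative, a bilinear Strichartz approach via Corollary~\ref{C:BS} (which gains a factor $(N/M)^\delta$ on off-diagonal frequency interactions) could supply the same refinement by a Schur-type test on the cross terms in a multilinear expansion of $|F|^q$.
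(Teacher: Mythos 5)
Your primary route has a genuine gap at its central step. In the G\'erard--Meyer--Oru/Bahouri--G\'erard layer-cake argument, the low-frequency block $\sum_{N\le N^*}F_N$ is controlled \emph{pointwise} because the refinement there is phrased in terms of an $L^\infty$-type (Besov $\dot B_{\infty,\infty}$) quantity $\sup_N N^{-\alpha}\|P_Nf\|_{L^\infty}$. Here $M=\sup_N\|F_N\|_{S(\R)}$ is a spacetime $L^q_{t,x;\gamma}$ quantity, and no pointwise bound on $F_N(t,x)$ follows from it: Lemma~\ref{L:WB} only trades $P_{>N}$ for $N^{-1}\nabla$ within a fixed weighted $L^q$ space, and spatial Bernstein at a fixed time would require a fixed-time $L^q_x$ bound, which cannot be extracted from an $L^q_t$ average without further input. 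So the step ``the low-frequency part is dominated pointwise by $M$'' is unjustified as stated, and it is exactly the step where the supremum is supposed to enter. This is why refinements of \emph{Strichartz} (as opposed to Sobolev) estimates are not usually proved by layer-cake.

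The one-sentence alternative at the end of your proposal is essentially the paper's proof, and is the route you should develop. One expands $\|F\|_{S(\R)}^q\sim\|(\sum_M|F_M|^2)^{1/2}\|_{L^{q;\gamma}_{t,x}}^q$ (legitimate since $|x|^\gamma$ is $A_q$) into a $k$-linear sum over ordered frequencies $N_1\le\cdots\le N_k$ with $2(k-1)<q\le 2k$. All but one power of each of the two \emph{extreme} frequencies, and all powers of the intermediate ones, are estimated by $\sup_N\|F_N\|_{S(\R)}$, which produces the factor $M^{q-2}$. The remaining single powers of $F_{N_1}$ and $F_{N_k}$ are placed in $L^{r_1;\gamma}_{t,x}$ and $L^{r_k;\gamma}_{t,x}$ with $r_1>q>r_k$ and $\tfrac1{r_1}+\tfrac1{r_k}=\tfrac2q$; the weighted radial Strichartz estimate together with Bernstein gives $\|F_{N_j}\|_{L^{r_j;\gamma}_{t,x}}\lesssim N_j^{\frac2p(1-\frac{q}{r_j})}\|f_{N_j}\|_{\dot H^{s_c}}$, so the product of the two carries the off-diagonal gain $(N_1/N_k)^{\frac2p(1-\frac{q}{r_1})}$ with a positive exponent. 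Cauchy--Schwarz (Schur's test) in $(N_1,N_k)$ then yields $\|f\|_{\dot H^{s_c}}^2$ and closes the estimate. The bilinear estimate of Corollary~\ref{C:BS} would serve the same purpose of supplying off-diagonal decay, but it is not needed: mismatched Lebesgue exponents plus Bernstein already suffice, and avoid the bookkeeping of pairing up factors bilinearly inside the $k$-linear sum.
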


\begin{proof}  Fix an integer $k\geq 2$ with $2(k-1)<q\leq 2k$ and take $r_1>q>r_k$ so that 
\[
\tfrac1{r_1}+\tfrac1{r_k}=\tfrac{2}{q}.
\]
Note that by Strichartz (choosing $r_1$ and $r_k$ close enough to $q$) and Bernstein, we have
\[
\| e^{it\Delta} P_N f \|_{L_{t,x}^{r_j;\gamma}} \lesssim N^{\frac{2}{p}(1-\frac{q}{r_j})}\| |\nabla|^{s_c} P_N f\|_{L_x^2}.
\]
Thus, writing $F(t)=e^{it\Delta}f$ and recalling that $|x|^\gamma$ is an $A_q$ weight, we have by the square function estimate, H\"older, Strichartz, and Bernstein, 
\begin{align*}
\| F\|_{L_{t,x}^{q;\gamma}}^q &\sim \biggl\| \biggl(\sum_M |F_M|^2\biggr)^{\frac12}\biggr\|_{L_{t,x}^{q;\gamma}}^q  \lesssim \sum_{N_1\leq\cdots\leq N_k} \iint \biggl[\prod_{j=1}^k |F_{N_j}|^{\frac{q}{k}}\biggr]|x|^\gamma\,dx\,dt \\
& \lesssim \sum_{N_1\leq\cdots \leq N_k}\biggl(\prod_{j=1,k}\| F_{N_j}\|_{L_{t,x}^{q;\gamma}}^{\frac{q}{k}-1}\|F_{N_j}\|_{L_{t,x}^{r_j;\gamma}}\biggr) \prod_{\ell=2}^{k-1} \|F_{N_\ell}\|_{L_{t,x}^{q;\gamma}}^{\frac{q}{k}} \\
& \lesssim \sup_N \|F_N\|_{L_{t,x}^{q;\gamma}}^{q-2} \sum_{N_1\leq\cdots\leq N_k}(\tfrac{N_1}{N_k})^{\frac2p(1-\frac{q}{r_1})}\| \nsc f_{N_1}\|_{L_x^2}\|\nsc f_{N_k}\|_{L_x^2}.
\end{align*}
The result now follows from an application of Cauchy--Schwarz. \end{proof} 

With Lemma~\ref{L:IS} in place, we can now sketch the proof of Proposition~\ref{P:IS}.

\begin{proof}[Proof of Proposition~\ref{P:IS}] Using Lemma~\ref{L:IS}, we deduce the existence of $N_n\in 2^{\mathbb{Z}}$ such that
\[
\| |x|^{\frac{\gamma}{q}}e^{it\Delta} P_{N_n}u_n\|_{L_{t,x}^q} \gtrsim A(\tfrac{\eps}{A})^{\frac{q}{q-2}}.
\]

As $\tfrac12-s_c<\tfrac2q$, we may choose $\theta$ satisfying
\[
\tfrac{2}{d-1}(\tfrac{d}{2}-s_c-\tfrac2q) < \theta <1. 
\]
Then by H\"older's inequality and Strichartz, 
\begin{align*}
A(\tfrac{\eps}{A})^{\frac{q}{q-2}} & \lesssim \| |x|^{\frac{\gamma}{q\theta}} e^{it\Delta} P_{N_n} u_n \|_{L_{t,x}^{\theta q}}^\theta \| e^{it\Delta} P_{N_n} u_n\|_{L_{t,x}^\infty}^{1-\theta} \\ 
& \lesssim \| |\nabla|^{\frac{d}{2}-\frac{2}{p\theta}} P_{N_n}u_n\|_{L_x^2}^\theta  \| e^{it\Delta} P_{N_n} u_n\|_{L_{t,x}^\infty}^{1-\theta} \\
& \lesssim N_n^{-\frac{2(1-\theta)}{p}}\|u_n\|_{\dot H^{s_c}}^\theta  \| e^{it\Delta} P_{N_n} u_n\|_{L_{t,x}^\infty}^{1-\theta}.
\end{align*}
It follows that there exist $(t_n,x_n)\in\R^{1+d}$ such that
\begin{equation}\label{is-nontriv}
N_n^{-\frac2p}|(e^{it_n\Delta} P_{N_n}u_n)(x_n)| \gtrsim \beta:=A(\tfrac{\eps}{A})^{\frac{q}{(q-2)(1-\theta)}}. 
\end{equation}

By Banach--Alaoglu, there is a subsequence along which
\[
N_n^{-\frac2p}(e^{it_n\Delta}u_n)(\tfrac{x}{N_n}+x_n)
\]
converges weakly in $\dot H^{s_c}$.  Taking inner products with the test function $P_1\delta$ and using \eqref{is-nontriv} shows that this weak limit is non-trivial, having $\dot H^{s_c}$ norm is bounded below by $\beta$.

Finally, we observe that the sequence $\{N_n x_n\}$ is bounded.  Indeed, by radial Sobolev embedding, 
\[
|x_n|^{\frac{d-1}{2}}|(e^{it_n\Delta}P_{N_n}u_n)(x_n)| \lesssim \|P_{N_n}u_n\|_{L_x^2}^{\frac12}\|\nabla P_{N_n}u_n\|_{L_x^2}^{\frac12} \lesssim N_n^{\frac12-s_c}A,
\] 
which combined with \eqref{is-nontriv} yields 
$$
N_n |x_n| \lesssim [A\beta^{-1} ]^{\frac2{d-1}}.
$$
Thus, passing to a subsequence, we conclude that $N_nx_n$ converges to a fixed point in $\R^d$.  After an additional translation, we may thus assume $x_n\equiv 0$.
\end{proof}

As stated above, successive applications of Proposition~\ref{P:IS} yield the linear profile decomposition Proposition~\ref{P:LPD}. For a textbook treatment, see \cite{Visan-notes}.  

To prove Theorem~\ref{T:reduction}, we set 
\[
L(E) := \sup \norm{u}_{S(K)},
\]
where the supremum is taken over all compact time intervals $K$ and over all radial solutions $u\in C(K;\dot{H}^{s_c})$ satisfying $\sup_{t\in K} \norm{u(t)}_{\dot{H}^{s_c}} \le E$. By the results in  \cite{Ma3,KMMV}, we have the following characterization of the quantity $E_c$ introduced in \eqref{Ec}:
\[
E_c = \sup \{E  :\, L(E)<\infty\}.
\]
Hence, with the linear profile decomposition Proposition~\ref{P:LPD} and the stability theory of Section~\ref{S:LWP} in place, the existence of an almost periodic modulo symmetries solution attaining $E_c$ follows from well-known arguments; see, for example, \cite{Visan-notes}.  Finally, the rescaling and limiting arguments in \cite{KTV,TVZ} apply in our setting and allow us to reduce attention to the three scenarios recorded in Theorem~\ref{T:reduction}.

We conclude this section with a few useful properties of almost periodic solutions. 

\begin{lemma}[Uniform smallness]\label{L:unif-small} Let $u:I\times\R^d\to\C$ be a radial maximal-lifespan almost periodic solution to \eqref{nls}.  For any $\eps>0$, there exists $\delta=\delta(u,\eps)>0$ such that for any $t_0\in I$, we have
\[
 I_\delta(t_0):=[t_0-\delta N(t_0)^{-2}, t_0+\delta N(t_0)^{-2}] \subset I\qtq{and} \|u\|_{S(I_\delta(t_0))}<\eps.
\]
\end{lemma}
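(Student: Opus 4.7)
The plan is to use the scaling symmetry to reduce to the case $N(t_0)=1$, to establish smallness of the linear evolution on short intervals via a frequency splitting and a non-critical radial Strichartz estimate, and then to upgrade to the nonlinear solution through a standard Duhamel bootstrap combined with the blow-up criterion of Theorem~\ref{T:LWP}.

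First I would introduce the rescaled solution
\[
v(s,y):=N(t_0)^{-\frac{2}{p}}\,u\bigl(t_0 + N(t_0)^{-2}s,\, N(t_0)^{-1}y\bigr),
\]
which again solves \eqref{nls}, has lifespan $I_v := \{s:\, t_0 + N(t_0)^{-2}s\in I_{\max}\}$, preserves the $\dot H^{s_c}_x$ and $S$-norms, and whose frequency parameter at $s=0$ equals $1$. Since the function $C(\eta)$ in Definition~\ref{D:AP} does not depend on $t$, the almost periodicity of $u$ translates into the uniform bound $\|P_{>C(\eta)}v(0)\|_{\dot H^{s_c}_x}<\eta^{1/2}$, so it suffices to show that for every $\eps>0$ there is $\delta>0$ (depending on $u$ and $\eps$) with $[-\delta,\delta]\subset I_v$ and $\|v\|_{S([-\delta,\delta])}<\eps$.

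The core step is linear smallness: for any prescribed $\eps'>0$, I would show that $\|e^{is\Delta}v(0)\|_{S([-\delta,\delta])}<\eps'$ once $\delta$ is sufficiently small. Split $v(0)=P_{\leq C(\eta)}v(0)+P_{>C(\eta)}v(0)$. The critical Strichartz estimate \eqref{wp} bounds the high-frequency contribution by $\eta^{1/2}$ on all of $\R$. For the low-frequency piece I would employ Theorem~\ref{T:RS} with the spatial exponent and weight $(r,\alpha)=(q,\gamma/q)$ fixed at their $S$-norm values but with a time exponent $\tilde q>q$ taken slightly larger than $q$; by the scaling relation \eqref{scalecond} this corresponds to regularity $\tilde s=s_c+\tfrac{2}{q}-\tfrac{2}{\tilde q}>s_c$, and the admissibility conditions \eqref{alpharange} (which are open) persist under a small perturbation of $\tilde q$ around $q$. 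Strichartz, Bernstein, and H\"older in time then yield
\[
\|e^{is\Delta}P_{\leq C(\eta)}v(0)\|_{S([-\delta,\delta])}\lesssim (2\delta)^{\frac{1}{q}-\frac{1}{\tilde q}}\,C(\eta)^{\tilde s-s_c}\,\|v(0)\|_{\dot H^{s_c}_x}.
\]
Choosing $\eta$ small for the high-frequency bound and then $\delta$ small depending on $\eta$ and $\|u\|_{L^\infty_t\dot H^{s_c}_x}$ gives the claim.

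Finally, from linear to nonlinear: on any compact $J\subset I_v$ containing $0$, Duhamel combined with \eqref{wp} and the pointwise bound $\||v|^pv\|_{N(J)}\leq\|v\|_{S(J)}^{p+1}$ gives
\[
\|v\|_{S(J)}\leq C_0\,\|e^{is\Delta}v(0)\|_{S(J)}+C_0\,\|v\|_{S(J)}^{p+1}.
\]
Since $v\in S(K)$ for every compact $K\subset I_v$, the map $J\mapsto\|v\|_{S(J)}$ is continuous in the endpoints of $J$ and vanishes as $J$ shrinks to $\{0\}$, so a standard continuity bootstrap upgrades the linear smallness to $\|v\|_{S(J)}<\eps$ uniformly over compact $J\subset I_v\cap[-\delta,\delta]$ containing $0$, provided $\eps'$ was chosen small in terms of $\eps$ and $C_0$. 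If $I_v\cap[-\delta,\delta]$ were a proper subset of $[-\delta,\delta]$, the blow-up criterion in Theorem~\ref{T:LWP} would force $\|v\|_{S(I_v\cap[-\delta,\delta])}=\infty$, a contradiction; hence $[-\delta,\delta]\subset I_v$, which after undoing the rescaling gives the lemma. The main obstacle is the linear smallness step: the $S$-norm sits at critical scaling, so Strichartz alone gives no gain from short time intervals. This is circumvented by combining almost periodicity (to dispense with the high frequencies uniformly in $t_0$) with a non-critical radial Strichartz pair that trades the extra Sobolev regularity of the low-frequency data against a positive power of $\delta$.
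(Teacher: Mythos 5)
Your argument is correct, but it takes a genuinely different route from the paper's at the key step. The paper obtains the linear smallness $\|e^{i(t-t_0)\Delta}u(t_0)\|_{S(I_\delta(t_0))}<\eps$ by a compactness argument: after the same rescaling, almost periodicity says the rescaled profiles $\psi(t_0)$ range over a precompact set $K\subset\dot H^{s_c}_x$, and then Strichartz plus monotone convergence give smallness of $\|e^{it\Delta}\psi\|_{S([-\delta,\delta])}$ uniformly over $\psi\in K$. You instead exploit only the frequency-localization half of almost periodicity, splitting at $C(\eta)$ and trading Bernstein regularity for a positive power of $\delta$ via a slightly subcritical weighted Strichartz pair; this is a valid use of Theorem~\ref{T:RS}, since the pair $(q,q,\gamma/q,s_c)$ satisfies the constraints of \eqref{alpharange} with strict inequalities (this is precisely what \eqref{q-conditions} guarantees), so a small perturbation of the time exponent is admissible. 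Your version is more quantitative (it yields an explicit rate $\delta^{1/q-1/\tilde q}C(\eta)^{\tilde s-s_c}$), at the cost of being slightly more computational; the paper's is softer but shorter. For the passage from linear to nonlinear, the paper invokes the stability theorem (Theorem~\ref{T:stab}) with $\tilde u(t)=e^{i(t-t_0)\Delta}u(t_0)$, whereas you run the continuity bootstrap directly from Duhamel and \eqref{wp}; these are essentially equivalent. Finally, the paper simply cites \cite[Lemma~5.18]{KVClay} for the inclusion $I_\delta(t_0)\subset I$, while you derive it from the blow-up criterion of Theorem~\ref{T:LWP} together with the uniform $S$-bound on compact subintervals — this is clean and self-contained. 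One cosmetic point: $P_{>C(\eta)}$ is a Littlewood--Paley cutoff rather than the sharp Fourier restriction appearing in Definition~\ref{D:AP}, but since $P_{>N}$ has Fourier support in $\{|\xi|>N\}$ and is bounded on $\dot H^{s_c}_x$, your bound $\|P_{>C(\eta)}v(0)\|_{\dot H^{s_c}_x}\lesssim\eta^{1/2}$ still follows.
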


\begin{proof}  First, note that we may choose $\delta=\delta(u)$ small enough that $I_\delta(t_0)\subset I$ for any $t_0\in I$ (see e.g. \cite[Lemma~5.18]{KVClay}).

Let $\eps>0$.  We first claim that there exists $\delta=\delta(u,\eps)>0$ such that
\begin{equation}\label{delta1}
\| e^{i(t-t_0)\Delta} u(t_0)\|_{S(I_\delta(t_0))} < \eps \qtq{for any}t_0\in I. 
\end{equation} 
To see this, we first use almost periodicity to write
\[
u(t,x) = N(t)^{\frac{2}{p}}\psi(t)(N(t)x),
\]
where $\psi:I\to K$ for some precompact set $K\subset\hsc $.  A change of variables shows
\[
\| e^{i(t-t_0)\Delta}u(t_0)\|_{S(I_\delta(t_0))}= \| e^{it\Delta}\psi(t_0)\|_{S([-\delta,\delta])}. 
\]
Using Strichartz, monotone convergence, and the precompactness of $K$, we deduce that for $\delta=\delta(u,\eps)$ small enough,
\[
\sup_{t_0\in I} \|e^{it\Delta}\psi(t_0)\|_{S([-\delta,\delta])} < \eps.
\]
This gives \eqref{delta1}.

Next, we apply the stability theory with the approximate solution 
\[
\tilde u(t) = e^{i(t-t_0)\Delta}u(t_0)
\] 
on the interval $I_\delta(t_0)$.  Note that $\tilde u$ solves \eqref{nls} up to an error of
\[
\| |\tilde u|^p \tilde u \|_{N(I_\delta(t_0))} \lesssim \| \tilde u\|_{S(I_\delta(t_0))}^{p+1}\lesssim \eps^{p+1}.
\]
Furthermore, $\tilde u(t_0)=u(t_0)$.  Thus, for $\eps$ small we can apply the stability result Theorem~\ref{T:stab} to deduce that
\[
\|u-\tilde u\|_{S(I_\delta)}\lesssim \eps^{p+1},\qtq{whence} \|u\|_{S(I_\delta)}\lesssim \eps.
\]
The result follows. \end{proof}

Using that $N(t)=t^{-\frac12}$ for self-similar solutions and that $N(t)\leq 1$ in the remaining two scenarios, we deduce:

\begin{corollary}\label{C:unif-small} Let $u:I\times\R^d\to\C$ be a radial almost periodic solution as in Theorem~\ref{T:reduction}.  For any $\eps>0$, there exists $0<\delta=\delta(u,\eps)<1$ such that:
\begin{itemize}
\item If $u$ is self-similar, then 
\[
\sup_{T\in(0,\infty)} \| u\|_{S((1-\delta)T,(1+\delta)T)} < \eps.
\]
\item If $u$ is a cascade or soliton solution, then
\[
\| u\|_{S(I)} <\eps\qtq{for any interval such that} |I|< 2\delta. 
\] 
\end{itemize}
\end{corollary}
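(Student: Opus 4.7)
The plan is to deduce Corollary~\ref{C:unif-small} directly from Lemma~\ref{L:unif-small} by specializing the interval $I_\delta(t_0) = [t_0 - \delta N(t_0)^{-2}, t_0 + \delta N(t_0)^{-2}]$ to each of the three scenarios using the explicit behavior of $N(t)$. Since Lemma~\ref{L:unif-small} already gives the core analytic input (namely that $\|u\|_{S(I_\delta(t_0))}<\eps$ for an appropriate $\delta=\delta(u,\eps)$), what remains is essentially a bookkeeping argument about which time intervals $I_\delta(t_0)$ covers in each scenario.

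For the self-similar case, I would substitute $N(t) = t^{-1/2}$ into the expression for $I_\delta(t_0)$, yielding $N(t_0)^{-2} = t_0$ and hence $I_\delta(t_0) = [(1-\delta)t_0, (1+\delta)t_0]$. Setting $T = t_0$ and taking the supremum over $T \in (0,\infty)$ recovers the first bullet of the corollary as a direct consequence of Lemma~\ref{L:unif-small}.

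For the cascade and soliton scenarios, I would use the hypothesis $\sup_{t \in \R} N(t) \le 1$, which gives $N(t_0)^{-2} \ge 1$ for every $t_0$. Thus if $I' \subset I$ is any interval with $|I'| < 2\delta$ and $t_0 \in I'$ is any point, then
\[
I' \subset [t_0 - \delta, t_0 + \delta] \subset [t_0 - \delta N(t_0)^{-2}, t_0 + \delta N(t_0)^{-2}] = I_\delta(t_0),
\]
and monotonicity of the $S$-norm under restriction, together with Lemma~\ref{L:unif-small}, yields $\|u\|_{S(I')} \le \|u\|_{S(I_\delta(t_0))} < \eps$.

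There is no genuine obstacle here: the only subtlety is noticing that in the self-similar case the fixed-proportion dilations around $T$ are precisely the intervals $I_\delta(T)$, while in the other two cases the bound $N \le 1$ makes the $I_\delta(t_0)$ cover all intervals of fixed length below $2\delta$. Both observations are elementary once Lemma~\ref{L:unif-small} is in hand.
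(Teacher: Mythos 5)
Your argument is correct and is exactly the paper's (the paper simply states that the corollary follows from Lemma~\ref{L:unif-small} using $N(t)=t^{-1/2}$ in the self-similar case and $N(t)\le 1$ in the other two, which is precisely the substitution and covering observation you carry out). No issues.
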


As a consequence of Lemma \ref{L:unif-small} and the frequency-localized Strichartz estimate Proposition \ref{P:NStrichartz}, we obtain

\begin{lemma}\label{stb}
Let $u:I\times\R^d\to\C$ be a radial almost periodic solution to \eqref{nls}  as in Theorem~\ref{T:reduction}.  Given $\eps>0$, let $\delta=\delta(u,\eps)>0$ be as in Lemma \ref{L:unif-small}.  Then, for any $t_0\in I$ and $(q,r,s,\alpha)\in (2,\infty)\times (2,\infty) \times \R \times \R$ satisfying \eqref{scalecond} and \eqref{alpharange}, 
\[
\|P_{M} u \|_{L^q_tL_x^{r;r\alpha}(I_\delta(t_0)\times \R^d)}\lesssim_{p,q,s,\alpha} M^{s-s_c} \eps.
\]
The operator $P_{M}$ can be replaced by $P_{>M}$ if $s\leq s_c$ and by $P_{<M}$ if $s\geq s_c$.
\end{lemma}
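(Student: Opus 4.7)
The plan is to use the Duhamel formula on $I_\delta(t_0)$ to split $P_M u$ into a linear evolution from $t_0$ plus an inhomogeneous Duhamel term, and to estimate each piece separately. The $\eps$-smallness will come from Lemma \ref{L:unif-small} (for the nonlinear piece, via the smallness of $\|u\|_{S(I_\delta)}$) together with a monotone-convergence argument powered by the precompactness inherent in almost periodicity (for the linear piece).

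For the inhomogeneous term, I would apply Proposition \ref{P:NStrichartz} with dual exponents $(\tilde q,\tilde r,\tilde s,\tilde\alpha)$ chosen so that $\tilde q'=\tilde r'=\tfrac{q}{p+1}$, $-\tilde r'\tilde\alpha=\gamma$, and so that \eqref{scalecond} holds; a short computation shows these constraints force $\tilde s=-s_c$, so that $M^{s+\tilde s}=M^{s-s_c}$ and the right-hand side of Proposition \ref{P:NStrichartz} reduces to the $N$-norm. Using boundedness of $P_M$ on the associated weighted $L^{q/(p+1)}$-space (verifiable from the $A_q$-weight condition on $|x|^\gamma$ in our parameter range), this yields
\[
\biggl\|P_M\int_{t_0}^t e^{i(t-s)\Delta}(|u|^p u)(s)\,ds\biggr\|_{L_t^q L_x^{r;r\alpha}(I_\delta(t_0))}\lesssim M^{s-s_c}\|u\|_{S(I_\delta)}^{p+1}\lesssim M^{s-s_c}\eps^{p+1}.
\]

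For the linear piece, I would rescale using almost periodicity. Writing $u(t,x)=N(t)^{2/p}\psi(t,N(t)x)$ with $\{\psi(t):t\in I\}$ precompact in $\dot H^{s_c}$, a change of variables built from \eqref{scalecond} gives
\[
\|P_M e^{i(t-t_0)\Delta}u(t_0)\|_{L_t^q L_x^{r;r\alpha}(I_\delta(t_0))}=N(t_0)^{s-s_c}\|P_{M/N(t_0)}e^{i\tau\Delta}\psi(t_0)\|_{L_\tau^q L_y^{r;r\alpha}([-\delta,\delta])}.
\]
Setting $M'=M/N(t_0)$, the goal becomes bounding the rescaled norm by $M'^{s-s_c}\eps$ uniformly in $M'>0$ and $t_0\in I$. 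I would dichotomize on $M'$ using the modulus $C(\cdot)$ from Definition \ref{D:AP}: for $M'$ outside a bounded range (depending on $\eps$), uniform frequency localization forces $\|P_{M'}\psi(t_0)\|_{\dot H^{s_c}}<\eps$, whence Theorem \ref{T:RS} and Bernstein yield the bound directly at regularity $s$; for $M'$ inside the bounded range, $\{P_{M'}\psi(t_0)\}$ is precompact in $\dot H^{s_c}$, the seminorm $\phi\mapsto\|e^{i\tau\Delta}\phi\|_{L_\tau^q L_y^{r;r\alpha}([-\delta,\delta])}$ is continuous by Strichartz and vanishes as $\delta\to 0$ on any individual $\phi$ by monotone convergence, so it does so uniformly on the precompact set by equicontinuity.

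Combining the two pieces yields the claim for $P_M$ once $\delta=\delta(u,\eps)$ and $\eps$ are chosen small enough. The versions with $P_{>M}$ (when $s\le s_c$) and $P_{<M}$ (when $s\ge s_c$) follow by dyadic summation: for $s\ne s_c$ the geometric series $\sum_{M'}(M'/M)^{s-s_c}$ over the appropriate dyadic range converges and contributes only a harmless constant; at the endpoint $s=s_c$ the $A_r$-boundedness of the relevant frequency projector on $L_t^q L_x^{r;r\alpha}$ reduces the estimate to the unprojected $s=s_c$ version applied to $u$ itself. The principal technical hurdle lies in the linear piece: the Bernstein factor $M'^{s-s_c}$ must be reconciled with uniform smallness coming from two distinct mechanisms in almost periodicity, namely frequency localization outside a bounded range of $M'$ and equicontinuity on a precompact family for bounded $M'$, and the dichotomy above is exactly what is needed to patch these two regimes together.
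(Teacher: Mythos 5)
Your proposal is correct and follows the route the paper itself indicates (the paper offers no written proof, simply declaring the lemma a consequence of Lemma~\ref{L:unif-small} and Proposition~\ref{P:NStrichartz}): Duhamel plus the frequency-localized inhomogeneous Strichartz estimate with the dual tuple matching the $N$-norm for the nonlinear term, and the rescaling/precompactness mechanism already used in the proof of Lemma~\ref{L:unif-small}, sharpened by your frequency dichotomy, for the linear term. Two cosmetic points: the uniform \emph{low}-frequency localization you invoke for small $M'$ comes from precompactness of the rescaled orbit in $\dot H^{s_c}$ rather than from Definition~\ref{D:AP} as literally stated, and your middle-range equicontinuity step may require shrinking $\delta$ in a way that depends on $(q,r,s,\alpha)$ and on $C(\eps)$ --- harmless, since only finitely many exponent tuples are ever used, but worth saying.
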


Finally we have the following reduced Duhamel formula for almost periodic solutions, which follows from the fact that $e^{-it\Delta}u(t)$ must converge weakly to zero in $\dot H^{s_c}$ at the endpoints of its maximal lifespan (cf.  \cite[Proposition~5.23]{KVClay}).

\begin{lemma}[Reduced Duhamel formula]\label{L:RDF} Let $u:I\times\R^d\to\C$ be a maximal-lifespan almost periodic solution to \eqref{nls}. Then
\begin{align*}
u(t)=i\lim_{T\to\sup I}\int_t^T e^{i(t-s)\Delta}(\mu |u|^p u)(s) \,ds = -i\lim_{T\to\inf I}\int_T^t e^{i(t-s)\Delta}(\mu |u|^p u)(s)\,ds,
\end{align*}
as weak limits in $\dot H^{s_c}$. 
\end{lemma}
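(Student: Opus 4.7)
The plan is to derive the reduced Duhamel formulas via a short weak-limit argument, assuming the weak convergence statement
\[
e^{-iT\Delta}u(T) \rightharpoonup 0 \quad \text{in } \dot H^{s_c} \text{ as } T \to \sup I
\]
(and analogously as $T \to \inf I$), which is precisely \cite[Proposition~5.23]{KVClay} transplanted to our setting. Granting this, I rewrite the strong Duhamel identity from Definition~\ref{D:solution} as
\[
u(t) = e^{i(t-T)\Delta}u(T) + i\mu \int_t^T e^{i(t-s)\Delta}|u|^p u(s)\,ds
\]
for any $t, T \in I$, and pass $T \to \sup I$. Since $e^{i(t-T)\Delta}u(T) = e^{it\Delta}\bigl[e^{-iT\Delta}u(T)\bigr]$ and $e^{it\Delta}$ is a unitary---hence weakly continuous---operator on $\dot H^{s_c}$, the linear term converges weakly to zero and the forward reduced formula drops out; the backward formula is identical.

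The substantive content is the weak convergence. I fix a sequence $T_n \to \sup I$ and, using the uniform $\dot H^{s_c}$ bound from almost periodicity together with Banach--Alaoglu, extract a subsequence along which $e^{-iT_n\Delta}u(T_n) \rightharpoonup \phi$ for some $\phi \in \dot H^{s_c}$. It suffices to show $\phi = 0$; by density of Schwartz functions $\psi$ with $\widehat{\psi} \in C_c^\infty(\R^d\setminus\{0\})$ in $\dot H^{-s_c}$, the task reduces to proving that $\langle u(T_n), e^{iT_n\Delta}\psi\rangle_{\dot H^{s_c}} \to 0$ for every such $\psi$. The key input is the spatial/frequency localization of $u(T_n)$ at the characteristic scale $N(T_n)$ (Definition~\ref{D:AP}), which allows me to truncate $u(T_n)$ to the ball $\{|x| \lesssim C(\eta)/N(T_n)\}$ and the annulus $\{|\xi| \sim N(T_n)\}$ modulo $\dot H^{s_c}$-error $\eta$.

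The argument then splits on $\sup I$. If $\sup I < \infty$, Theorem~\ref{T:LWP} combined with Lemma~\ref{L:unif-small} forces $N(T_n) \to \infty$, so the Fourier support of $u(T_n)$ escapes the support of $\widehat{\psi}$ and the pairing vanishes. If $\sup I = \infty$, the dispersive decay $\|e^{iT_n\Delta}\psi\|_{L_x^\infty} \lesssim |T_n|^{-d/2}$ combined with the spatial localization of $u(T_n)$ (via Cauchy--Schwarz on the localized ball) and frequency orthogonality drives the pairing to zero modulo $\eta$; since $\eta$ is arbitrary, $\phi = 0$. The main obstacle is the delicate subcase of $\sup I = \infty$ with $N(T_n) \to 0$ fast enough that $N(T_n) T_n$ stays bounded: here the spatial extent of $u(T_n)$ may outrun the pointwise dispersive decay of $e^{iT_n\Delta}\psi$, and one must re-invoke the Duhamel formula to squeeze nonlinear decay out of the evolution, precisely as carried out in the proof of \cite[Proposition~5.23]{KVClay}, whose argument adapts verbatim given our radial Strichartz toolkit from Section~\ref{S:RS}.
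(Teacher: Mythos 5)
Your proposal takes the same route as the paper, which proves the lemma exactly by reducing it to the weak vanishing of $e^{-iT\Delta}u(T)$ in $\dot H^{s_c}$ at the endpoints of $I$ and citing \cite[Proposition~5.23]{KVClay}; your reduction via the Duhamel formula and the case split on $\sup I$ (frequency escape when $\sup I<\infty$, spatial localization against dispersive decay when $\sup I=\infty$) is precisely that standard argument. The one small correction: the subcase you flag as delicate ($\sup I=\infty$ with $N(T_n)T_n$ bounded) is in fact vacuous, since the local theory gives $N(t)\gtrsim_u \min\{N(t_0),|t-t_0|^{-1/2}\}$, hence $N(T)T\gtrsim_u T^{1/2}\to\infty$ and your Cauchy--Schwarz/dispersive bound already closes without re-invoking Duhamel.
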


\section{The self-similar scenario}\label{S:SS}

In this section we prove the following:

\begin{theorem}\label{T:no-ss} There are no self-similar solutions as in Theorem~\ref{T:reduction}.
\end{theorem}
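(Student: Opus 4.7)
The plan is to derive a contradiction by establishing that any self-similar solution $u$ as in Theorem~\ref{T:reduction} must, in fact, satisfy $u\in L^\infty_t L^2_x((0,\infty)\times\R^d)$. Granted this upgrade, the almost-periodic decomposition $u(t,x)=N(t)^{2/p}\psi(t)(N(t)x)$ with $N(t)=t^{-1/2}$ yields
\[
\|u(t)\|_{L^2_x}^2 = t^{s_c}\,\|\psi(t)\|_{L^2_x}^2.
\]
Since $s_c<0$, uniform boundedness of the left-hand side as $t\downarrow 0$ forces $\|\psi(t)\|_{L^2_x}\to 0$; the precompactness of $\{\psi(t)\}$ in $\dot H^{s_c}_x$ then forces $\|u(t_n)\|_{\dot H^{s_c}_x}\to 0$ along some sequence $t_n\downarrow 0$, which, combined with small-data scattering (Corollary~\ref{C:small-data}) and the stability theory of Section~\ref{S:LWP}, contradicts the non-scattering character of a minimal counterexample.

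To upgrade the a priori control from $\dot H^{s_c}_x$ to $L^2_x$, I will bootstrap in frequency using the forward reduced Duhamel formula on the maximal lifespan $(0,\infty)$ (Lemma~\ref{L:RDF}):
\[
P_{>N}u(t)=i\lim_{T\to\infty}\int_t^T e^{i(t-s)\Delta}P_{>N}\bigl(\mu|u|^p u\bigr)(s)\,ds.
\]
To this I will apply the frequency-localized inhomogeneous radial Strichartz estimate (Proposition~\ref{P:NStrichartz}) inside the weighted spaces $S(I),\,N(I)$ of Section~\ref{S:FS}. Expanding $|u|^p u$ as $\text{\O}(u^{p+1})$ and splitting each factor dyadically as $u_{\le M}+u_{>M}$, the most dangerous interactions (those with two factors at high frequency) will be handled by the bilinear Strichartz estimate (Corollary~\ref{C:BS}), which supplies a genuine negative power of the highest frequency because $s_c\in(-\tfrac12,0)$ whenever $p>\tfrac{4}{d+1}$. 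The low-frequency factors will be controlled through almost periodicity via Lemma~\ref{stb}, while the uniform smallness on time slabs of length $\sim N(t)^{-2}=t$ furnished by Corollary~\ref{C:unif-small} will keep the nonlinear coupling small on each self-similar scale. Summing over $M$ and iterating in $N$ with the acausal Gronwall inequality (Lemma~\ref{L:AG}) should yield
\[
\|P_{>N}u\|_{L^\infty_t L^2_x}\lesssim N^{-\delta}
\]
for some $\delta>0$. The low-frequency piece is harmless since $|\xi|^{-2s_c}\le 1$ on $|\xi|\le 1$ gives $\|P_{\le 1}u\|_{L^2_x}\lesssim \|u\|_{\dot H^{s_c}_x}$, so $u\in L^\infty_t L^2_x$ will follow.

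The principal technical obstacle is that the starting regularity $s_c$ is strictly negative, so the $(p+1)$-linear estimate on $|u|^p u$ cannot be closed by unweighted Strichartz and Sobolev embedding in the standard fashion. This is exactly what the weighted radial Strichartz estimates of Theorem~\ref{T:RS} and the weighted spaces $S(I),\,N(I)$ were designed for; nevertheless one must still distribute the frequency projections among the $p+1$ factors carefully so that the pair to which Corollary~\ref{C:BS} is applied produces a genuine gain while the remaining factors stay controlled by their $S(I)$-type norms. In contrast to the cascade and soliton scenarios, however, here it suffices to cross the regularity $s=0$ rather than to reach $s=\tfrac12$, which is why the preclusion of the self-similar case does not require the more restrictive assumption $p>p_0(d)$.
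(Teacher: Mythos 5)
Your overall strategy --- upgrade the self-similar solution to $L^2$ and then contradict the blow-up at $t=0$ --- is exactly the paper's, and your toolkit (reduced Duhamel, weighted radial Strichartz, bilinear Strichartz for the high-high pair, acausal Gronwall) is the right one. However, there are two genuine gaps in the execution. First, the frequency localization must be taken \emph{relative to the self-similar scale}: the correct quantities are $\sup_{T}\|P_{>AT^{-1/2}}u(T)\|_{\dot H^{s_c}_x}$ and its $S$- and $N$-norm analogues over the slabs $(T,(1+\delta)T)$, not absolute cutoffs $P_{>N}$ with a uniform-in-$t$ norm. Since the frequency scale $N(t)=t^{-1/2}$ escapes to infinity as $t\downarrow 0$, for any fixed $N$ the piece $P_{>N}u(t)$ eventually contains essentially all of $u(t)$, and one has $\|P_{>N}u(t)\|_{L^2_x}\gtrsim t^{-|s_c|/2}$ along the concentration; so the bound $\|P_{>N}u\|_{L^\infty_tL^2_x}\lesssim N^{-\delta}$ you propose to establish cannot be obtained by the direct Duhamel--Strichartz bootstrap. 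What the argument actually yields is $\|P_{>At^{-1/2}}u(t)\|_{\dot H^{s_c}_x}\lesssim A^{-2|s_c|}$, hence $u(t)\in L^2$ for each \emph{fixed} $t$ with a constant degenerating like $t^{-|s_c|/2}$; that pointwise-in-time membership (plus mass conservation and $L^2$-subcritical global well-posedness) already gives the contradiction, so the uniform statement is both unreachable directly and unnecessary.

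Second, your bootstrap has no seed. Uniform smallness from Corollary~\ref{C:unif-small} and Lemma~\ref{stb} gives bounds of size $\eps$, not quantitative decay in the frequency parameter, and the self-improving step (the analogue of Corollary~\ref{MSNcontrolsN}) only converts decay at rate $A^{-\sigma}$ into decay at rate $A^{-\sigma-\sigma_1}$; it cannot start from nothing, because the worst term in the nonlinear recurrence is of the schematic form $\S(A^{k\beta})^p\,\S(A^\beta)$, which is merely small rather than decaying. The paper supplies the seed in Lemma~\ref{L:initialS} by applying the weighted dispersive estimate (Lemma~\ref{L:WDE}) to the \emph{backward} reduced Duhamel formula over the finite interval $(0,\tfrac{T}{1+\delta})$ --- a step that exploits precisely the fact that the self-similar lifespan begins at $t=0$, so the time integral of the dispersive kernel converges and produces an honest negative power $A^{-\sigma_0}$. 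Without an ingredient of this kind your iteration does not get off the ground. (Your closing observations are correct: only $p>\tfrac{4}{d+1}$ is needed here, and the restriction $p>p_0(d)$ enters only in the cascade/soliton regularity argument.)
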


It suffices to prove that any self-similar solution must belong to $L^2$.  Indeed, $L^2$ solutions to the mass-subcritical NLS are automatically global, while self-similar solutions blow up at $t=0$.  Throughout this section, we write $F(z)=\mu |z|^p z$. 

\begin{proof}[Proof of Theorem~\ref{T:no-ss}] Suppose toward a contradiction that $u$ is a self-similar solution in the sense of Theorem~\ref{T:reduction}.  We fix $\eps>0$ to be determined below (cf. Lemma~\ref{L:initialS}), and choose $\delta=\delta(u,\eps)$ as in Corollary~\ref{C:unif-small}.  For $T\in(0,\infty)$ we set 
\[
I_\delta(T):=(T,(1+\delta)T).
\]  
For $A>0$, we define
\begin{align}
\M(A) &:= \sup_{T\in(0,\infty)} \| \nsc u_{>AT^{-\frac12}}(T)\|_{L_x^2}, \\
\S(A) &:= \sup_{T\in(0,\infty)} \| u_{>AT^{-\frac12}}\|_{S(I_\delta(T))}, \\
\N(A) &:= \sup_{T\in(0,\infty)} \| P_{>AT^{-\frac12}} F(u) \|_{N(I_\delta(T))},
\end{align}
where we use the notation $S(I)$ and $N(I)$ introduced in Section~\ref{S:FS}.  By Strichartz,
\begin{align}\label{10am}
\S(A)\lesssim \M(A)+\N(A) \quad\text{uniformly in $A>0$},
\end{align}
while by Corollary~\ref{C:unif-small} and H\"older,
\begin{equation}\label{MSN-initial-small}
\M(A)\lesssim_u 1,\quad \S(A)\lesssim \eps,\qtq{and} \N(A)\lesssim \eps^{p+1} \quad\text{uniformly in $A>0$}.
\end{equation}

To prove $u\in L^2$, we will prove quantitative decay for $\M(A)$ as $A\to\infty$.   The first result towards this goal shows that decay of $\N$ implies decay of $\M$ and $\S$. 

\begin{lemma}[$\N$ controls $\M$ and $\S$]\label{NcontrolsM} For $\sigma>0$,
\[
\N(A)\lesssim A^{-\sigma}\implies \M(A) + \S(A)\lesssim A^{-\sigma}. 
\]
\end{lemma}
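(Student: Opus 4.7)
The plan is to use the reduced Duhamel formula (Lemma~\ref{L:RDF}) to represent the high-frequency portion of $u(T)$ as an integral into the future, and then to decompose this integral into geometric pieces controlled by $\N$. Thanks to \eqref{10am}, it suffices to prove $\M(A)\lesssim A^{-\sigma}$; the bound on $\S(A)$ then follows immediately.

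Fix $T>0$. Applying $|\nabla|^{s_c} P_{>AT^{-1/2}}$ (which commutes with the Schr\"odinger propagator) to Lemma~\ref{L:RDF} at time $T$ gives, as a weak limit in $L^2$,
\[
|\nabla|^{s_c} u_{>AT^{-1/2}}(T) = i\lim_{T'\to\infty}\int_T^{T'} e^{i(T-s)\Delta}\,|\nabla|^{s_c} P_{>AT^{-1/2}} F(u)(s)\,ds.
\]
I would then partition $(T,\infty)$ into the pieces $J_k:=I_\delta(T_k)=(T_k,T_{k+1})$, where $T_k:=(1+\delta)^k T$. Since $e^{i(T-T_k)\Delta}$ is an isometry on $\dot H^{s_c}$, the contribution from $J_k$ can be rewritten with initial time $T_k$ and then controlled by \eqref{wp}. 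Combining this with lower semicontinuity of the norm under weak limits and the triangle inequality yields
\[
\|u_{>AT^{-1/2}}(T)\|_{\dot H^{s_c}}\lesssim \sum_{k\geq 0}\|P_{>AT^{-1/2}} F(u)\|_{N(J_k)}.
\]

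The decisive step is a rescaling identity specific to the self-similar regime: since $T^{-1/2}=(1+\delta)^{k/2} T_k^{-1/2}$, the frequency threshold rewrites as $P_{>AT^{-1/2}}=P_{>[A(1+\delta)^{k/2}]T_k^{-1/2}}$. The definition of $\N$ combined with the hypothesis then gives
\[
\|P_{>AT^{-1/2}} F(u)\|_{N(J_k)} \leq \N\bigl(A(1+\delta)^{k/2}\bigr) \lesssim A^{-\sigma}(1+\delta)^{-k\sigma/2}.
\]
Summing the geometric series in $k\geq 0$ and taking the supremum over $T\in(0,\infty)$ delivers $\M(A)\lesssim_{\delta,\sigma} A^{-\sigma}$.

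I do not anticipate any serious obstacle. The most delicate point is that the evaluation time $T$ lies to the \emph{left} of each interval $J_k$, so \eqref{wp} does not apply directly with $t_0=T$; the unitarity trick above circumvents this. The passage through the weak limit is routine, since each finite truncation is dominated by the same geometric series, and the $\dot H^{s_c}$ norm of a weak limit is controlled by the liminf of the norms.
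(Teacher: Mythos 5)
Your proposal is correct and follows essentially the same route as the paper: the reduced Duhamel formula, a decomposition of $(T,\infty)$ into the intervals $I_\delta((1+\delta)^kT)$, the identification $P_{>AT^{-1/2}}=P_{>A(1+\delta)^{k/2}T_k^{-1/2}}$ to invoke $\N(A(1+\delta)^{k/2})$, and summation of the resulting geometric series, with the $\S$ bound then following from \eqref{10am}. The paper handles the "evaluation time to the left of $J_k$" issue by applying the dual Strichartz estimate directly to $\int e^{-is\Delta}F(s)\,ds$, which is the same unitarity observation you make.
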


\begin{proof} Suppose $\N(A)\lesssim A^{-\sigma}$.  By the reduced Duhamel formula (Lemma~\ref{L:RDF}), weak lower-semicontinuity, and Strichartz, we have
\begin{align*}
\| \nsc u_{>AT^{-\frac12}}(T)\|_{L_x^2} & \leq \sum_{k=0}^\infty\biggl\|\nsc \int_{I_\delta((1+\delta)^kT)} e^{i(T-s)\Delta}P_{>AT^{-\frac12}}F(u(s))\,ds\biggr\|_{L_x^2} \\
&\lesssim \sum_{k=0}^\infty \| P_{>AT^{-\frac12}} F(u)\|_{N(I_\delta((1+\delta)^kT))} \\
& \lesssim \sum_{k=0}^\infty \N((1+\delta)^{\frac{k}{2}} A)  \lesssim \sum_{k=0}^\infty (1+\delta)^{-\frac{k\sigma}{2}} A^{-\sigma} \lesssim A^{-\sigma}. 
\end{align*}
Thus $\M(A)\lesssim A^{-\sigma}$.  Combining this with \eqref{10am} yields  $\S(A)\lesssim A^{-\sigma}$, as well.
\end{proof}

The next lemma concerns the estimation of $\N(A)$. 

\begin{lemma}[Control of $\N$]\label{controlN}
Fix $0<\beta \le 1$.  There exist $k \in (0,1)$ and $\tilde\sigma>0$ depending only on $p$, $q$, and $d$
such that
\begin{align}\label{SN1}
\N(A) \lesssim \eps^p\!\! \sum_{N\leq A^\beta}\! \tfrac{N}{A} \S(N) &+ A^{-\tilde\sigma \beta} [\M ( A^\beta) + \N( A^\beta)]\notag\\
&+ [\S( A^{k\beta})^p + \S( A^\beta)^p]\S( A^\beta),
\end{align}
uniformly in $A>0$.
\end{lemma}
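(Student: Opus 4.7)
The plan is to perform a Littlewood--Paley paraproduct decomposition of the nonlinearity $F(u)=\mu|u|^p u$ at two frequency thresholds and estimate each resulting term. Fix $T>0$ and set $M=AT^{-\frac12}$, $M_0=A^\beta T^{-\frac12}$, and $M_1=A^{k\beta}T^{-\frac12}$, where $k\in(0,1)$ will be chosen so that all intermediate Strichartz exponents lie in the admissible range \eqref{alpharange} of Theorem~\ref{T:RS}. Decompose $u=u_{\leq M_0}+u_{>M_0}$ and use the algebraic inequality $|F(u)-F(u_{\leq M_0})|\lesssim|u_{>M_0}|(|u_{\leq M_0}|^p+|u_{>M_0}|^p)$ to split
\[
P_{>M}F(u)=P_{>M}F(u_{\leq M_0})+P_{>M}\O(u_{>M_0}\cdot u_{\leq M_0}^p)+P_{>M}\O(u_{>M_0}^{p+1}).
\]

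I would bound the third piece in $N(I_\delta(T))$ via H\"older, obtaining $\|u_{>M_0}\|_S^{p+1}\leq\S(A^\beta)^{p+1}$. For the mixed piece, I would further decompose $u_{\leq M_0}=u_{\leq M_1}+u_{M_1<\cdot\leq M_0}$ and expand $u_{\leq M_0}^p$. Terms containing at least one copy of $u_{M_1<\cdot\leq M_0}$ are controlled by $\S(A^\beta)\cdot\S(A^{k\beta})^p$ via H\"older, providing (together with the first estimate) the third summand of \eqref{SN1}. The leftover contribution $P_{>M}\O(u_{>M_0}\cdot u_{\leq M_1}^p)$ is the source of the first summand: I would dyadically decompose the low-frequency factor $u_{\leq M_1}=\sum_{NT^{-1/2}\leq M_1}u_{NT^{-1/2}}$ and apply the radial bilinear Strichartz estimate (Corollary~\ref{C:BS}) to each pairing $(u_{NT^{-1/2}},u_{>M_0})$, extracting a factor proportional to $N/A$; the remaining $p-1$ low-frequency factors of $u_{\leq M_1}$ are estimated in $S(I_\delta(T))$ by $\eps$ via Corollary~\ref{C:unif-small}. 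Summing over dyadic $N\leq A^{k\beta}$ and using $\|u_{NT^{-1/2}}\|_S\lesssim\S(N)$ yields $\eps^p\sum_{N\leq A^\beta}(N/A)\S(N)$; contributions from scales $A^{k\beta}<N\leq A^\beta$ that arise in intermediate H\"older steps can be absorbed into the same sum.

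The principal piece $P_{>M}F(u_{\leq M_0})$ is the most delicate and must produce the second summand $A^{-\tilde\sigma\beta}[\M(A^\beta)+\N(A^\beta)]$. I would use the reduced Duhamel formula (Lemma~\ref{L:RDF}) to represent $u_{\leq M_0}$ as the Duhamel integral of $P_{\leq M_0}F(u)$, apply the frequency-localized Strichartz estimates (Proposition~\ref{P:NStrichartz}) to bound the relevant Strichartz norms of $u_{\leq M_0}$ by $\M(A^\beta)+\N(A^\beta)$, and combine this with a fractional chain rule together with the weighted Bernstein inequality (Lemma~\ref{L:WB}) to extract the decay factor $(M_0/M)^\sigma$. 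The main obstacle is this last piece: since $F$ is not smooth for generic $p$, the usual Bernstein/commutator arguments need a fractional variant that is compatible with the $A_q$-weighted norm $L^{q/(p+1);\gamma}_{t,x}$, and the exponents must be tracked carefully so that the raw frequency-ratio gain can be presented in the form $A^{-\tilde\sigma\beta}$ with $\tilde\sigma$ depending only on $p,q,d$. The choice of $k\in(0,1)$ is dictated by the admissibility constraints that appear in this Strichartz analysis.
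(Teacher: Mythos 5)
Your decomposition of the nonlinearity is essentially the paper's (four pieces: $F(u_{\leq A^\beta T^{-1/2}})$, $\text{\O}(u^p_{\leq A^{k\beta}T^{-1/2}}u_{>A^\beta T^{-1/2}})$, $\text{\O}(u^p_{A^{k\beta}T^{-1/2}<\cdot\leq A^\beta T^{-1/2}}u_{>A^\beta T^{-1/2}})$, $\text{\O}(u_{>A^\beta T^{-1/2}}^{p+1})$), and your treatment of the last two pieces by H\"older is correct. But you have swapped which of the first two pieces produces which summand of \eqref{SN1}, and both halves of the swapped argument break down. First, the principal piece $P_{>AT^{-1/2}}F(u_{\leq A^\beta T^{-1/2}})$ cannot produce $A^{-\tilde\sigma\beta}[\M(A^\beta)+\N(A^\beta)]$: the quantities $\M(A^\beta)$ and $\N(A^\beta)$ only measure the solution at frequencies \emph{above} $A^\beta T^{-1/2}$, so no Duhamel/Strichartz manipulation can control the low-frequency input $u_{\leq A^\beta T^{-1/2}}$ by them. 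Moreover, the fractional chain rule machinery you invoke to extract extra decay is both unnecessary and unavailable ($F$ has only limited smoothness). This piece is actually the \emph{easy} one and is the source of the \emph{first} summand: by weighted Bernstein (Lemma~\ref{L:WB}) one gains exactly one derivative, $\|P_{>AT^{-1/2}}F(u_{\leq A^\beta T^{-1/2}})\|_{N}\lesssim (AT^{-1/2})^{-1}\|\text{\O}(u^p\nabla u_{\leq A^\beta T^{-1/2}})\|_{N}$, and H\"older plus Corollary~\ref{C:unif-small} gives $\eps^p\sum_{N\leq A^\beta}\tfrac{N}{A}\S(N)$.

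Second, your plan for the mixed piece $\text{\O}(u_{\leq A^{k\beta}T^{-1/2}}^p\,u_{>A^\beta T^{-1/2}})$ does not yield the first summand. Corollary~\ref{C:BS} applied to the pairing $u_{NT^{-1/2}}u_{>A^\beta T^{-1/2}}$ produces the factor $(NT^{-1/2})^{\frac{d-1}{2}+|s_c|}(A^\beta T^{-1/2})^{-(\frac12-|s_c|)}$, not something ``proportional to $N/A$''; the high-frequency factor comes out in the norm $\|\cdot\|_*$, i.e.\ as $\M(A^\beta)+\N(A^\beta)$, not as $\S(N)$; and the bilinear estimate lives in $L^2_{t,x}$, not in $N(I)=L^{q/(p+1);\gamma}_{t,x}$, so one cannot simply multiply by $p-1$ further $S$-norms. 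This term is in fact the delicate one and is the source of the \emph{second} summand: the paper interpolates, via a trilinear H\"older with a small parameter $\theta$, between the bilinear Strichartz bound (to power $\theta$), a weighted Hardy/Strichartz bound $\||x|^{-a}u_{\leq A^{k\beta}T^{-1/2}}\|_{L^r_{t,x}}^{p-\theta}$, and $\|u_{>A^\beta T^{-1/2}}\|_{S}^{1-\theta}$; the powers of $T$ cancel by scaling, and choosing $k$ small enough makes the net exponent of $A$ equal to $-\tilde\sigma\beta<0$. As written, your argument establishes neither the first nor the second summand, so the proof has a genuine gap.
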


\begin{proof} Fix $\alpha = k\beta$, where $k\in(0,1)$ will be determined below.  We decompose the nonlinearity as follows:
\begin{align}
F(u) & = F(u_{\leq A^\beta T^{-\frac12}}) + \text{\O}(|u_{\leq A^\alpha T^{-\frac12}}|^p u_{>A^\beta T^{-\frac12}}) \label{decomp1} \\
& \quad +{\O}(|u_{A^\alpha T^{-\frac12}<\cdot\leq A^\beta T^{-\frac12}}|^p u_{>A^\beta T^{-\frac12}}) + {\text{\O}}(|u_{>A^\beta T^{-\frac12}}|^{p+1}). \label{decomp2}
\end{align}

We firstly estimate the terms in \eqref{decomp2} via H\"older's inequality:
\[
\|P_{>AT^{-\frac12}}\eqref{decomp2}\|_{N(I_\delta(T))} \lesssim [\S( A^\alpha)^p + \S( A^\beta)^p]\S( A^\beta),
\]
which is acceptable. 

By weighted Bernstein (Lemma~\ref{L:WB}), H\"older, and Corollary~\ref{C:unif-small},
\begin{align*}
\| P_{>A T^{-\frac12}}F(u_{\leq A^\beta T^{-\frac12}}) \|_{N(I_\delta(T))} & \lesssim (AT^{-\frac12})^{-1} \| \text{\text{\O}}(u^p \nabla u_{\leq {A^\beta}T^{-\frac12}}) \|_{N(I_\delta(T))} \\
& \lesssim \eps^p \sum_{N\leq A^\beta} \tfrac{N}{A} \S(N),
\end{align*}
which is acceptable.  Note that the application of Lemma~\ref{L:WB} requires
\[
-d<\gamma<d(\tfrac{q}{p+1}-1),
\]
which follows from \eqref{q-conditions}. 

We turn to the second term in \eqref{decomp1}.  We fix a parameter $\theta=\theta(p,q)$ satisfying
\[
0<\theta<\min\{1,p,\tfrac{(q-2)p}2,\tfrac{2(q-p)}{2-q|s_c|}\}
\]
and define
\[
r = \tfrac{2q(p-\theta)}{2p-\theta(q-2)}\qtq{and} a =-\tfrac{p+\theta}{p-\theta}\tfrac{\gamma}{q}.
\]
Note that $r\in (2,\infty)$ since $\tfrac{2}{q}\in(\tfrac{1}{p+1},1)$. Recalling that $\gamma<0$, we also have $a>0$. Furthermore, using the upper bound on $\theta$, we find
\begin{equation}\label{ss-hardy}
\tfrac{d}{r}-a = \tfrac{p+\theta}{p-\theta}(\tfrac{2}{p}-\tfrac{2}{q})-\tfrac{\theta}{p-\theta}\tfrac{d}{2}>0. 
\end{equation}

Discarding the projection to high frequencies and taking all space-time norms over $I_\delta(T)\times\R^d$, we use H\"older's inequality to estimate
  \begin{align*}
\|& |u_{\leq A^\alpha T^{-\frac12}}|^p u_{>A^\beta T^{-\frac12}} \|_{N(I_\delta(T))} \\
 & \lesssim \| u_{\leq A^\alpha T^{-\frac12}} u_{>A^\beta T^{-\frac12}} \|_{L_{t,x}^2}^\theta \| |x|^{-a} u_{\leq A^{\alpha}T^{-\frac12}} \|_{L_{t,x}^r}^{p-\theta} \| u_{>A^{\beta}T^{-\frac12}} \|_{S(I_\delta(T))}^{1-\theta}. \end{align*}
We will now estimate each of these terms individually.  

By the bilinear estimate from Corollary~\ref{C:BS} and Corollary~\ref{C:unif-small}, we get
\begin{align*}
\| u_{\leq A^\alpha T^{-\frac12}} u_{>A^\beta T^{-\frac12}} \|_{L_{t,x}^2} \lesssim A^{\alpha(\frac{d-1}{2}+|s_c|)-\beta(\frac12-|s_c|)} T^{-\frac12(\frac{d}{2}-1+2|s_c|)}[\M(A^\beta)\!+\!\N(A^\beta)].
\end{align*}
On the other hand, by H\"older (in time), Hardy (cf. \eqref{ss-hardy}), and Bernstein,
\[
\| |x|^{-a}u_{\leq A^\alpha T^{-\frac12}} \|_{L_{t,x}^r} \lesssim A^{\alpha(\frac{d}{2}-\frac{d}{r}+a+|s_c|)} T^{-\frac12(\frac{d}{2}-\frac{d+2}{r}+|s_c|+a)} \|\nsc u\|_{L_t^\infty L_x^2}. 
\]
Finally, an application of Strichartz yields
\[
\| u_{>A^{\beta}T^{-\frac12}} \|_{S(I_\delta(T))} \lesssim \M(A^\beta)+\N(A^\beta). 
\]

We now combine these three estimates and collect the total powers of $T^{-\frac12}$ and $A$.  For the power of $T^{-\frac12}$, we have
\[
\theta(\tfrac{d}{2}-1+2|s_c|)+(p-\theta)(\tfrac{d}{2}-\tfrac{d+2}{r}+|s_c|+a) = 0.
\]
Next, using the scaling relations (e.g. \eqref{ss-hardy} above) and recalling $\alpha=k\beta$, we write the power of $A$ as
\begin{align*}
&\theta[\alpha(\tfrac{d-1}{2}+|s_c|)-\beta(\tfrac12-|s_c|)]+(p-\theta)\alpha(\tfrac{d}{2}-\tfrac{d}{r}+a+|s_c|) \\
& = \alpha \bigl[\tfrac{2p}{q}+ \theta(\tfrac2q -\tfrac12 + |s_c|)\bigr] - \theta\beta(\tfrac12-|s_c|)  \\
&= \beta\bigl\{k\bigl[\tfrac{2p}{q} + \theta(\tfrac2q -\tfrac12 + |s_c|) \bigr]-\theta(\tfrac12-|s_c|)\bigr\}.
\end{align*}
This may be written in the form $-\tilde\sigma\beta$ for $\tilde\sigma>0$ provided
\begin{align*}
0<k<\tfrac{\theta(\frac12-|s_c|)}{\frac2q(p+\theta) + \theta(\frac12-|s_c|)}<1.
\end{align*}
This completes the estimation of the second term in \eqref{decomp1} and so the proof of Lemma~\ref{controlN}. \end{proof}

Combining Lemmas~\ref{NcontrolsM} and \ref{controlN} leads to the following: 

\begin{corollary}\label{MSNcontrolsN}
Let $0<b<1$.  There exists $0<\sigma_1<1-b$ such that the following holds:  for any $0<\sigma<b$, 
\begin{equation}\label{E:MSNcontrolsN}
\M(A) + \S(A)+ \N(A) \lesssim A^{-\sigma}	\implies \M(A)+\S(A)+ \N(A) \lesssim A^{-\sigma -\sigma_1}.
\end{equation}
\end{corollary}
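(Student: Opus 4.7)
The plan is to apply Lemma~\ref{controlN} with a parameter $\beta\in(0,1)$ chosen close to $1$, insert the bootstrap hypothesis $\M(A)+\S(A)+\N(A)\lesssim A^{-\sigma}$ into each of the three terms on its right-hand side, and verify that each decays like $A^{-\sigma-\sigma_1}$ with a gain $\sigma_1>0$ depending only on $b$. Once the improved bound $\N(A)\lesssim A^{-\sigma-\sigma_1}$ is in place, Lemma~\ref{NcontrolsM} upgrades it to the same decay rate for $\M(A)+\S(A)$, which closes the bootstrap.

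For the first term I would use $\S(N)\lesssim N^{-\sigma}$ together with $\sigma<b<1$: since $N^{1-\sigma}$ grows geometrically, the sum is dominated by its largest entry, giving
\[
\eps^p\sum_{N\leq A^\beta}\tfrac{N}{A}\S(N)\lesssim \eps^p A^{\beta(1-\sigma)-1}=\eps^p A^{-\sigma-(1-\beta)(1-\sigma)},
\]
which produces a uniform gain of at least $(1-\beta)(1-b)>0$. For the second term, the hypothesis gives
\[
A^{-\tilde\sigma\beta}[\M(A^\beta)+\N(A^\beta)]\lesssim A^{-\beta(\tilde\sigma+\sigma)}=A^{-\sigma-[\beta\tilde\sigma-(1-\beta)\sigma]},
\]
with uniform gain at least $\beta\tilde\sigma-(1-\beta)b>0$ provided $\beta>b/(\tilde\sigma+b)$, which fixes an acceptable range for $\beta$.

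The main obstacle is the third term $[\S(A^{k\beta})^p+\S(A^\beta)^p]\S(A^\beta)$. Directly applying the decay hypothesis to all factors bounds it by $A^{-\beta\sigma(kp+1)}$, which only improves on $A^{-\sigma}$ by a factor $\sigma[\beta(kp+1)-1]$ that degenerates as $\sigma\to 0$. To extract a gain uniform in $\sigma\in(0,b)$ I would interpolate between the decay hypothesis and the universal small-data bound $\S\lesssim\eps$, writing $\S(M)\lesssim \eps^{1-\theta}M^{-\theta\sigma}$ for a suitable $\theta\in(0,1)$; choosing $\theta$ carefully trades part of the exponent gain for an $\eps^{(1-\theta)p}$ prefactor, after which the third term can be absorbed on top of the gains from the first two. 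Taking $\sigma_1>0$ to be the minimum of the three resulting gains (and, if necessary, shrinking it below $1-b$) yields $\N(A)\lesssim A^{-\sigma-\sigma_1}$, and Lemma~\ref{NcontrolsM} then delivers the same bound for $\M+\S$.
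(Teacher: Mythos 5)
Your overall strategy --- apply Lemma~\ref{controlN}, insert the hypothesis into each of the three terms on its right-hand side, and then upgrade the bound on $\N$ to one on $\M+\S$ via Lemma~\ref{NcontrolsM} --- is exactly the paper's proof, and your treatment of the first two terms coincides with it. For the third term, however, the paper simply uses the direct bound $A^{-\beta\sigma(1+kp)}$ and takes $\beta$ close to $1$; the resulting gain $\sigma[\beta(1+kp)-1]$ is indeed proportional to $\sigma$, as you observe, but this is harmless in the only place the corollary is used: in the proof of Theorem~\ref{T:no-ss} the iteration starts from a fixed $\sigma_0>0$ and $\sigma$ only increases, so the gain stays bounded below by $\sigma_0[\beta(1+kp)-1]$. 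Your proposed interpolation repair does not accomplish what you want: replacing $\S(M)\lesssim M^{-\sigma}$ by $\S(M)\lesssim\eps^{1-\theta}M^{-\theta\sigma}$ only \emph{weakens} the exponent (the third term then decays like $\eps^{(1-\theta)p}A^{-\beta\sigma(1+k\theta p)}$, which is worse in $A$ than before), and no constant prefactor $\eps^{(1-\theta)p}$ can compensate for a deficit in the power of $A$ uniformly as $A\to\infty$. So you should either accept a gain of the form $\min\{\sigma_1,c\sigma\}$ (which is what the argument actually delivers and is all the application requires) or note that $\sigma$ is bounded below in the iteration; as written, the claim of a gain uniform over all $\sigma\in(0,b)$ is not established by the interpolation step.
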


\begin{proof}
Suppose the left-hand side of \eqref{E:MSNcontrolsN} holds for some $0<\sigma<b<1$.  We apply Lemma \ref{controlN} to obtain
\[
	\N (A)
\lesssim A^{-\beta\sigma-(1-\beta)} + A^{-\beta(\sigma+\tilde\sigma)} + A^{-\beta\sigma(1+kp)}.
\]
Choosing $0<\beta=\beta(k,p,\sigma,\tilde\sigma)<1$ sufficiently close to $1$ yields the desired estimate for $\N(A)$.  Combining this with Lemma~\ref{NcontrolsM} we obtain the same bounds for $\M(A)$ and $\S(A)$.  
\end{proof}

With Corollary~\ref{MSNcontrolsN} in place, it remains to obtain some initial quantitative decay.

\begin{lemma}[Initial estimate for $\S$]\label{L:initialS} There exists $0<\sigma_0<1$ so that
\begin{equation}\label{initialS}
\S(A)\leq C(u, \delta)  A^{-\sigma_0} + C(u) \eps^p \sum_{N\leq \frac{A}{2}} \tfrac{N}{A}\S(N) \qtq{uniformly in $A>0$.}
\end{equation}
In particular, choosing $\eps$ sufficiently small, we get
\begin{equation}\label{quantS}
\S(A) \lesssim  A^{-\sigma_0} \qtq{uniformly in $A>0$.}
\end{equation}
\end{lemma}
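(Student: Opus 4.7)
The plan is to derive the recursion by combining a Strichartz estimate on the short interval $I_\delta(T)$ with the nonlinear decomposition of Lemma~\ref{controlN} (in the borderline case $\beta=1$), and to handle the initial-data contribution at time $T$ via the reduced Duhamel formula. First, applying Strichartz on $I_\delta(T)$ to the frequency-localized equation $(i\partial_t+\Delta)u_{>AT^{-1/2}}=\mu P_{>AT^{-1/2}}F(u)$ yields $\S(A)\lesssim\M(A)+\N(A)$, so it suffices to bound each of $\M(A)$ and $\N(A)$ by the right-hand side of \eqref{initialS}.

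For $\N(A)$, I would rerun the decomposition of Lemma~\ref{controlN} with $\beta=1$ (cutoff $AT^{-1/2}/2$) and $\alpha=k\in(0,1)$ chosen as in the proof of Lemma~\ref{controlN}: the weighted-Bernstein piece on $P_{>AT^{-1/2}}F(u_{\le AT^{-1/2}/2})$ produces precisely the kernel $\eps^p\sum_{N\le A/2}\tfrac{N}{A}\S(N)$; the bilinear Strichartz piece on $P_{>AT^{-1/2}}(|u_{\le A^kT^{-1/2}}|^p u_{>AT^{-1/2}/2})$ yields a decay factor $A^{-\tilde\sigma}$ multiplied by $\M(A)+\N(A)$; and the two Hölder-type pieces $\text{\O}(|u_{A^k<\cdot\le A}|^p u_{>A})$ and $\text{\O}(|u_{>A}|^{p+1})$ are majorized by $[\S(A^k)^p+\S(A)^p]\S(A)\lesssim\eps^p\S(A)$. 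Absorbing the $A^{-\tilde\sigma}\N(A)$ and $\eps^p\S(A)$ terms into the LHS (using $\S\lesssim\M+\N$ and small $\eps$) then gives
\[
\N(A)\lesssim\eps^p\sum_{N\le A/2}\tfrac{N}{A}\S(N)+A^{-\tilde\sigma}\M(A).
\]

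For $\M(A)$ I would apply the reduced Duhamel formula (Lemma~\ref{L:RDF}) forward in time, covering $[T,\infty)$ by the intervals $I_\delta((1+\delta)^kT)$ for $k\ge 0$ and using Strichartz on each subinterval to get $\M(A)\le\sum_{k\ge 0}\N(A(1+\delta)^{k/2})$; plugging in the bound above and using that $\M$ is uniformly bounded (by almost periodicity), the $A^{-\tilde\sigma}\M$ contributions form a convergent geometric series yielding $\M(A)\le C(u,\delta)A^{-\sigma_0}$ with $\sigma_0=\tilde\sigma$. Assembling this with the $\N$-bound proves \eqref{initialS}, and \eqref{quantS} then follows from \eqref{initialS} via the acausal Gronwall inequality (Lemma~\ref{L:AG}): after the dyadic substitution $A=2^k$ and setting $x_k=2^{k\sigma_0}\S(2^k)$, \eqref{initialS} takes the form $x_k\le b_k+C\eps^p\sum_{\ell<k}r^{k-\ell}x_\ell$ with ratio $r=2^{\sigma_0-1}\in(0,1)$ and bounded $b_k$, and Lemma~\ref{L:AG} delivers $x_k\lesssim 1$, i.e.\ $\S(A)\lesssim A^{-\sigma_0}$.

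The main obstacle is that the Hölder bound $\eps^{p+1}$ from the last two pieces of the Lemma~\ref{controlN} decomposition, together with the $\eps^p\S$-type terms that reappear when summing the reduced-Duhamel expansion, do not decay in $A$ and would a priori sum to infinity over the dyadic scales $A(1+\delta)^{k/2}$. To close the argument I would exploit the qualitative smallness $\S(A),\M(A)\to 0$ as $A\to\infty$, which follows from the precompactness of the orbit $\{\psi(T):T>0\}\subset\dot H^{s_c}$ (via almost periodicity and the self-similar rescaling $u(T,x)=T^{-1/p}\psi(T,x/\sqrt{T})$): for $A\ge A_0(u,\delta)$ these error terms become pointwise smaller than $\eps^p\S(A)$ and can be absorbed, while the finitely many scales $A<A_0$ are absorbed into the constant $C(u,\delta)$. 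This nonquantitative compactness input is precisely what forces the leading constant in \eqref{initialS} to depend on both $u$ and $\delta$.
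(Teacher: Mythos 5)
Your handling of the inhomogeneous term over the short interval (Lemma~\ref{controlN} with $\beta=1$, absorbing the $\eps^{p}\S(A)$-type pieces into the sum) matches the paper, and your passage from \eqref{initialS} to \eqref{quantS} via a dyadic Gronwall/induction is essentially the paper's argument. The genuine gap is in your estimate of $\M(A)$. You propose to bound it by the \emph{forward}-in-time reduced Duhamel formula, summing $\N\bigl(A(1+\delta)^{k/2}\bigr)$ over $k\geq 0$. Each such term contains $\eps^{p}\sum_{N\leq A_k/2}\tfrac{N}{A_k}\S(N)$, which does not decay in $k$ a priori, so the series diverges. You recognize this, but the proposed repair --- qualitative smallness $\S(A),\M(A)\to 0$ from precompactness of the orbit --- supplies only $o(1)$ terms, not a summable rate: if, say, $\S(N)\sim 1/\log N$, the sum over $k$ still diverges. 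This is exactly why the paper formulates Lemma~\ref{NcontrolsM} as a \emph{conditional} implication with the quantitative hypothesis $\N(A)\lesssim A^{-\sigma}$; invoking the forward Duhamel sum in order to \emph{produce} the initial decay is circular, and no amount of compactness alone breaks the circle.

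The paper's proof sidesteps this by writing the ordinary Duhamel formula from the fixed earlier time $T/(1+\delta)$, so that the term to control is the free evolution of $P_{>AT^{-1/2}}u(T/(1+\delta))$ on $I_\delta(T)$, not $\M(A)$ itself. At each frequency $B>A$ this is estimated by H\"older interpolation between (i) a weighted radial Strichartz bound, which gains $(BT^{-1/2})^{-2(1-\theta)|s_c|}$ from the high-frequency localization via Bernstein, and (ii) the \emph{backward}-in-time reduced Duhamel representation toward the blowup time $t=0$, estimated with the weighted dispersive estimate of Lemma~\ref{L:WDE}. In (ii) one has $|t-s|\gtrsim_\delta T$, so the integral over $(0,T/(1+\delta))$ converges and yields a compensating factor $T^{-(1-\theta)|s_c|}$; the self-similar relation $N(t)=t^{-1/2}$ makes the powers of $T$ cancel exactly, leaving $B^{-2(1-\theta)|s_c|}$, which is summable over $B>A$ and gives $\sigma_0=2(1-\theta)|s_c|$. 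This backward-Duhamel-plus-dispersive-estimate mechanism, which exploits the finite blowup time of the self-similar solution, is the essential ingredient missing from your argument.
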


\begin{proof} We first prove \eqref{initialS}.  Writing the Duhamel formula for $u$ beginning at $t=\frac{T}{1+\delta}$ and applying Strichartz, we get
\begin{align}
\|u_{>AT^{-\frac12}}\|_{S(I_\delta(T))} & \lesssim \| P_{>AT^{-\frac12}}e^{i(t-\frac{T}{1+\delta})\Delta}u(\tfrac{T}{1+\delta}) \|_{S(I_\delta(T))} \label{initialS1}\\ 
& \quad + \|P_{>AT^{-\frac12}} F(u)\|_{N( I_\delta(\frac{T}{1+\delta})\cup I_\delta(T))}.\label{initialS2}
\end{align}

By Lemma~\ref{controlN} (with $\beta=1$) and \eqref{MSN-initial-small}, we have
\[
\eqref{initialS2} \lesssim \N(A)+\N(A(1+\delta)^{-\frac12})\lesssim \N(\tfrac{A}{2}) \lesssim \text{RHS}\eqref{initialS}. 
\]

We will treat \eqref{initialS1} by estimating at a fixed frequency $B>A$ and summing.  Fix $\theta\in(0,1)$ (close to one) to be determined below and define
\[
a=\tfrac{q\theta}{\theta(q-1)+p(1-\theta)-(q-2)}, \quad b=\tfrac{q}{q-(p+1)},\quad \alpha= \tfrac{\gamma[p+2-\theta(p+1)]}{q\theta}.  
\]
Note that $a>2$ for $\theta$ close to $1$.  By H\"older's inequality, we get 
\begin{align}
\| e^{i[t-\frac{T}{1+\delta}]\Delta} P_{BT^{-\frac12}} u(\tfrac{T}{1+\delta})  \|_{S(I_\delta(T))} &\lesssim \| |x|^{\alpha} e^{i[t-\frac{T}{1+\delta}]\Delta}P_{BT^{-\frac12}} u(\tfrac{T}{1+\delta})\|_{L_{t,x}^a}^{\theta}  \label{ss-q1} \\
&\quad \times  \| |x|^{-\frac{\gamma}{q}(p+1)}e^{i[t-\frac{T}{1+\delta}]\Delta} u(\tfrac{T}{1+\delta}) \|_{L_{t,x}^b}^{1-\theta},\label{ss-q2}
\end{align}
where all space-time norms are over $I_\delta(T)\times\R^d$. 

To estimate \eqref{ss-q1} we use Theorem~\ref{T:RS}. This requires $\alpha>-\tfrac{d}{a}$, which we can guarantee by choosing $\theta$ close enough to $1$.  Indeed, when $\theta=1$, we have $\alpha=\tfrac{\gamma}{q}$ and $a=q$, in which case the desired bound follows from $p<q$.  Setting $s=\tfrac{d}{2}-\tfrac{d+2}{a}-\alpha$, we then estimate
\begin{align*}
\eqref{ss-q1} & \lesssim \| |\nabla|^{s}P_{BT^{-\frac12}} u(\tfrac{T}{1+\delta})\|_{L_x^2}^\theta \lesssim (BT^{-\frac12})^{\theta(s-s_c)} \lesssim (BT^{-\frac12})^{-2(1-\theta)|s_c|},
\end{align*}

We estimate \eqref{ss-q2} using the weighted dispersive estimate (Lemma~\ref{L:WDE}). This requires
\[
\tfrac{d-1}{2}[1-\tfrac{2}{b}]+\tfrac{\gamma}{q}(p+1)=\tfrac52-s_c-\tfrac3q(p+1)\geq 0,
\]
which follows from the final upper bound in \eqref{q-conditions}.  Using the reduced Duhamel formula (Lemma~\ref{L:RDF}) and Lemma~\ref{L:WDE}, we get
\begin{align*}
\| &|x|^{-\frac{\gamma}{q}(p+1)} e^{i[t-\frac{T}{1+\delta}]\Delta} u(\tfrac{T}{1+\delta})\|_{L_{t,x}^b} \\
& \lesssim T^{\frac{1}{b}} \biggl\| \int_0^{\frac{T}{1+\delta}} |t-s|^{-(\frac{d}{2}-\frac{d}{b})-\frac{\gamma}{q}(p+1)} \| |x|^{\frac{\gamma}{q}(p+1)} F(u(s)) \|_{L_x^{\frac{q}{p+1}}} \, ds\biggr\|_{L_t^\infty(I_\delta(T))} \\
& \lesssim_\delta T^{\frac{1}{b}-(\frac{d}{2}-\frac{d}{b})-\frac{\gamma}{q}(p+1)} \|\,|x|^{\frac{\gamma}{q}(p+1)}F(u)\|_{L_t^1 L_x^{\frac{q}{p+1}}((0,\frac{T}{1+\delta})\times\R^d)} \\
& \lesssim_\delta T^{\frac{1}{b}-(\frac{d}{2}-\frac{d}{b})-\frac{\gamma}{q}(p+1)}\sum_{0< \tau \leq \frac{T}{2(1+\delta)}} \tau^{\frac{1}{b}} \|u\|_{S([\tau,2\tau])}^{p+1}  \lesssim_\delta T^{-\frac{d}{2} + \frac{d+2}{b} - \frac{\gamma}{q}(p+1)} \lesssim_\delta T^{-|s_c|}. 
\end{align*}

Summing over $B>A$ we obtain
\[
\|  e^{i[t-\frac{T}{1+\delta}]\Delta} P_{>AT^{-\frac12}} u(\tfrac{T}{1+\delta})   \|_{S(I_\delta(T))} \lesssim_\delta A^{-2(1-\theta)|s_c|}. 
\]  
This completes the proof of \eqref{initialS}.

We turn to \eqref{quantS}.  We first  rewrite \eqref{initialS} as
\begin{equation}\label{initialSagain}
\mathcal S(A) \leq C_1(u,\delta) A^{-\sigma_0} +C(u) \eps^p \sum_{N\leq\frac{A}{2}} \tfrac{N}{A} \S(N) \qtq{uniformly in $A>0$.}
\end{equation}
The goal is to prove
\begin{equation}\label{quantS2}
\S(A) \leq C_2(u,\delta) A^{-\sigma_0} \qtq{uniformly in $A>0$.}
\end{equation}

By \eqref{MSN-initial-small}, \eqref{quantS2} clearly holds for $A\leq 1$ and $C_2$ sufficiently large.  Now suppose \eqref{quantS2} holds up to $\frac{A}{2}$.  Then using \eqref{initialSagain} and the inductive hypothesis, we deduce
\[
\S(A) \leq \bigl[C_1(u,\delta)+C_2(u,\delta)C(u) \eps^p\bigr]A^{-\sigma_0}. 
\]
Taking $C_2\geq 2C_1$ and $\eps$ small enough so that $\eps^p C(u)\leq 1$, we derive \eqref{quantS2} at level $A$.
 \end{proof}

We are now in a position to complete the proof of Theorem \ref{T:no-ss}.  We first claim that 
\begin{align}\label{1135}
\M(A)+\S(A)+\N(A) \lesssim A^{-\sigma_0} \quad\text{uniformly for $A>0$}
\end{align}
and some $\sigma_0>0$.   In view of \eqref{MSN-initial-small}, we only need to verify this claim for $A$ large.  By Strichartz combined with Lemmas~\ref{L:initialS} and \ref{controlN} (with $\beta=1$), we find
\[
\M(A) + \N(A)\lesssim A^{-\sigma_0} + A^{-\tilde\sigma}[\M(A)+\N(A)].
\] 
Taking $A$ sufficiently large, we deduce \eqref{1135}. 

We now choose $2|s_c|<b<1$ and apply Lemma~\ref{MSNcontrolsN} finitely many times to deduce
\[
\M(A) \lesssim A^{-2|s_c|}.
\]
Thus, for any $t\in(0,\infty)$, we have
\begin{align*}
\| u_{>At^{-\frac12}}(t)\|_{L_x^2} &\lesssim \sum_{B>A} (Bt^{-\frac12})^{|s_c|} \|\nsc u_{Bt^{-\frac12}}(t)\|_{L_x^2} \lesssim A^{-|s_c|}t^{-\frac{|s_c|}{2}},
\end{align*}
while by Bernstein,
\begin{align*}
\| u_{\leq At^{-\frac12}}(t)\|_{L_x^2}& \lesssim A^{|s_c|}t^{-\frac{|s_c|}{2}}\| \nsc u\|_{L_x^2}\lesssim A^{|s_c|}t^{-\frac{|s_c|}{2}}.
\end{align*}
We conclude that $u(t)\in L^2$, which implies that $u$ is global.  This contradicts the fact that the self-similar solution blows up at $t=0$ and completes the proof of Theorem~\ref{T:no-ss}.
\end{proof}

\section{Additional regularity}\label{S:AR}

In this section, we establish additional regularity for the cascade and soliton scenarios described in Theorem~\ref{T:reduction}.  It is here that we encounter the technical restriction $p>p_0(d)$, where $p_0(d)$ is defined in \eqref{range}; see the proof of Lemma~\ref{L:MC}. 

We will employ an in/out decomposition for radial functions, as introduced in \cite{KTV, KVZ}. Briefly, writing a radial function as $f=f(r)$, one defines the projection onto outgoing/incoming spherical waves by
\begin{align*}
[P^+ f](r) &= \tfrac12 f(r) + \tfrac{i}{\pi} \int_0^\infty \tfrac{r^{2-d}\rho^{d-1}}{r^2-\rho^2}f(\rho)\,d\rho, \\
[P^- f](r) & = \tfrac12 f(r) -\tfrac{i}{\pi} \int_0^\infty \tfrac{r^{2-d}\rho^{d-1}}{r^2-\rho^2}f(\rho)\,d\rho.
\end{align*}
We write $P_M^\pm = P^{\pm} P_M$.  We import a few results from \cite[Section~4]{KVZ}. 

\begin{lemma}[Kernel estimates]\label{L:KE} Fix $M\in 2^\Z$.  For $|x|\gtrsim M^{-1}$, $t\gtrsim M^{-2}$, and $m\geq 0$,
\[
\bigl| [P_M^\pm e^{\mp it\Delta}](x,y)\bigr| \lesssim_m \begin{cases} \bigl( |x|\,|y|\bigr)^{-\frac{d-1}{2}} |t|^{-\frac12} &: |y|-|x|\sim Mt \\ \\
\frac{M^d}{(M|x|)^{\frac{d-1}{2}}\langle M|y|\rangle^{\frac{d-1}{2}}} \langle M^2 t + M|x|\!-\!M|y|\rangle^{-m} &: \text{otherwise.}\end{cases}
\]
For $|x|\gtrsim M^{-1}$, $|t|\lesssim M^{-2}$, and $m\geq 0$,
\[
\bigl| [P_M^\pm e^{\mp it\Delta}](x,y)\bigr|\lesssim_m \frac{M^d}{(M|x|)^{\frac{d-1}{2}}\langle M|y|\rangle^{\frac{d-1}{2}}} \langle M|x|-M|y|\rangle^{-m}.
\] 
\end{lemma}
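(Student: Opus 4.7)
The plan is to represent the kernel as a one-dimensional oscillatory integral via the Fourier--Bessel (Hankel) transform, split the radial Bessel function into outgoing and incoming Hankel pieces, and then apply stationary phase/integration by parts in the radial frequency variable. Writing $\nu=\tfrac{d-2}{2}$ and letting $\varphi_M$ be a sharp Littlewood--Paley cutoff to $\rho\sim M$, the action of $e^{\mp it\Delta}$ on radial functions is expressed through the kernel
\[
\int_0^\infty e^{\mp it\rho^2}\varphi_M(\rho)\, \frac{J_\nu(\rho r)}{(\rho r)^\nu}\frac{J_\nu(\rho s)}{(\rho s)^\nu}\,\rho^{2\nu+1}\,d\rho,
\]
where $r=|x|$, $s=|y|$. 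The in/out decomposition $P^\pm$ replaces one factor of $J_\nu(\rho r)$ by the corresponding Hankel function $H_\nu^{(1,2)}(\rho r)$, which is precisely the mechanism that isolates outgoing vs.\ incoming spherical waves.

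Since $r\gtrsim M^{-1}$ and $\rho\sim M$, we have $\rho r\gtrsim 1$, so the standard asymptotic expansion
\[
H_\nu^{(1,2)}(\rho r) = (\rho r)^{-1/2} e^{\pm i \rho r}\,\bigl[1+ O((\rho r)^{-1})\bigr]
\]
is valid and produces the factor $(Mr)^{-(d-1)/2}$ after accounting for the weights $(\rho r)^{-\nu}$. For the $J_\nu(\rho s)$ factor one uses the same expansion when $\rho s\gtrsim 1$ (yielding the $(Ms)^{-(d-1)/2}$ factor), while when $\rho s\lesssim 1$ one uses $J_\nu(\rho s)\lesssim (\rho s)^\nu$, which accounts for the bracket $\langle Ms\rangle^{-(d-1)/2}$ appearing in the statement. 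After these substitutions, the kernel of $P_M^\pm e^{\mp it\Delta}$ is reduced to a finite sum of oscillatory integrals of the form
\[
\int_0^\infty e^{i\Phi_\pm(\rho)} \, a(\rho;r,s)\, d\rho, \qquad \Phi_\pm(\rho)= \mp t\rho^2 \mp \rho r \pm \rho s,
\]
with a compactly supported, slowly varying amplitude $a$ obeying symbol bounds in $\rho$.

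In the regime $t\gtrsim M^{-2}$, the relevant phase has derivative $\mp 2t\rho + (s-r)$ vanishing at $\rho_*=(s-r)/(2t)$, and second derivative $\mp 2t$. When $s-r\sim Mt$, this critical point lies in the support of the cutoff $\varphi_M$, and stationary phase produces the factor $|t|^{-1/2}$ together with the amplitude $(rs)^{-(d-1)/2}$, giving the first bound. Outside this window, $|\Phi'_\pm(\rho)|\gtrsim M^{-1}\langle M^2 t+Mr-Ms\rangle$ uniformly on $\mathrm{supp}\,\varphi_M$; iterating the nonstationary phase identity $e^{i\Phi}=\tfrac{1}{i\Phi'}\partial_\rho e^{i\Phi}$ and integrating by parts $m$ times yields the claimed rapid decay in $\langle M^2t+Mr-Ms\rangle$. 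For the other choices of signs coming from the $J_\nu(\rho s)$ expansion, the phase has derivative bounded below by $|t|M$, producing even faster decay that is dominated by the stated bound. In the short-time regime $|t|\lesssim M^{-2}$, the quadratic term $t\rho^2$ contributes only $O(1)$ oscillation on the support of $\varphi_M$, so it can be absorbed into the amplitude; the remaining phase $\pm\rho(s-r)$ is nonstationary, and integration by parts in $\rho$ immediately gives decay in $\langle Mr-Ms\rangle^{-m}$.

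The main technical point is the bookkeeping needed to combine the four sign choices arising from $H_\nu^{(1)}$ vs.\ $H_\nu^{(2)}$ and from the $J_\nu(\rho s)$ decomposition, and to track the transition between the small-$Ms$ regime (where $J_\nu$ is bounded) and the large-$Ms$ regime (where Hankel asymptotics apply); this is exactly where the $\langle Ms\rangle^{-(d-1)/2}$ factor enters rather than $(Ms)^{-(d-1)/2}$. Once one verifies that the amplitude $a(\rho;r,s)$ obeys uniform symbol bounds $|\partial_\rho^k a|\lesssim M^{-k}$ on $\mathrm{supp}\,\varphi_M$ in each region, the stationary phase and repeated integration by parts yield exactly the two estimates claimed.
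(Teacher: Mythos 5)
Your sketch is correct and follows essentially the same route as the source: the paper does not prove this lemma but imports it from \cite[Section~4]{KVZ}, where the argument is exactly the Hankel-transform representation of $P_M^{\pm}e^{\mp it\Delta}$, the asymptotic expansion of $H_\nu^{(1,2)}(\rho r)$ for $\rho r\gtrsim 1$ together with the bound $|J_\nu(\rho s)|\lesssim \langle \rho s\rangle^{-1/2}(\rho s)^{\nu}\cdot(\rho s)^{-\nu}$-type splitting, and then stationary phase when $|y|-|x|\sim Mt$ and repeated integration by parts otherwise. The only caveat is bookkeeping: your phase should carry $e^{\pm it\rho^2}$ for $e^{\mp it\Delta}$ under the paper's convention $e^{it\Delta}=\F^{-1}e^{-it|\xi|^2}\F$, and the lower-order terms in the Hankel expansion must be absorbed into the amplitude, but neither issue affects the argument.
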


\begin{lemma} The operator $P^++P^-$ is the identity on radial functions in $L^2(\R^d)$.  Moreover, for a spherically-symmetric function $f\in L^2(\R^d)$,
\[
\| P^{\pm} P_{>M} f\|_{L_x^2(|x|\geq \frac{1}{100}M^{-1})} \lesssim \|f\|_{L_x^2(\R^d)}, 
\]
uniformly for $M\in 2^\Z$.
\end{lemma}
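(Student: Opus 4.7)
The first claim is immediate: adding the two defining formulas makes the principal-value integrals cancel, so $(P^++P^-)f=f$ for every radial $f\in L^2(\R^d)$.

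For the quantitative bound, the strategy is to decompose $P_{>M} f=\sum_{N>M}P_N f$ and write
\[
P^\pm P_{>M} f=\sum_{N>M} P_N^{\pm}\widetilde P_N f,
\]
where $\widetilde P_N$ is a fattened Littlewood--Paley projector satisfying $P_N\widetilde P_N=P_N$. The central ingredient is the kernel estimate from Lemma~\ref{L:KE} evaluated at $t=0$:
\[
|P_N^{\pm}(x,y)|\lesssim_m \frac{N^d}{(N|x|)^{(d-1)/2}\langle N|y|\rangle^{(d-1)/2}}\langle N(|x|-|y|)\rangle^{-m}
\qquad\text{whenever } |x|\gtrsim N^{-1}.
\]
Since $N>M$ together with $|x|\geq\tfrac{1}{100}M^{-1}$ automatically gives $|x|\gtrsim N^{-1}$, this estimate applies uniformly throughout the region of interest.

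The next step is to invoke Schur's test on this kernel (taking $m$ large enough, depending on $d$) to establish the per-frequency bound
\[
\|P_N^{\pm}g\|_{L_x^2(|x|\gtrsim N^{-1})}\lesssim \|g\|_{L_x^2(\R^d)}
\]
uniformly in $N\in 2^{\mathbb{Z}}$. After the substitutions $s=N|y|$ and $t=N|x|$ the Schur integrals reduce to scale-invariant expressions of the form $t^{-(d-1)/2}\int_0^\infty s^{d-1}\langle s\rangle^{-(d-1)/2}\langle t-s\rangle^{-m}\,ds$, which are easily seen to be $O(1)$ for $t\gtrsim 1$ once $m>d$.

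The principal obstacle is the final summation over $N$. A triangle inequality is hopeless because the $P^{\pm}$ are not bounded on all of $L^2(\R^d)$, so bounds of size $\|\widetilde P_N f\|_{L^2}$ cannot simply be summed. The plan is to exploit almost-orthogonality: each summand $P_N^{\pm}\widetilde P_N f$ is essentially localized to frequencies of size $N$, while the rapid spatial decay $\langle N(|x|-|y|)\rangle^{-m}$ in the kernel forces the cross-terms $\langle P_N^{\pm}\widetilde P_N f,\,P_{N'}^{\pm}\widetilde P_{N'}f\rangle$ to decay geometrically in $|\log(N/N')|$. A Cotlar--Stein or $TT^{*}$ argument then yields
\[
\Bigl\|\sum_{N>M}P_N^{\pm}\widetilde P_N f\Bigr\|_{L_x^2(|x|>\frac{1}{100}M^{-1})}^{2}\lesssim \sum_{N>M}\|\widetilde P_N f\|_{L^2}^{2}\lesssim \|f\|_{L^2}^{2}.
\]
It is precisely in this final step that the interplay between the spatial cutoff $|x|>\tfrac{1}{100}M^{-1}$ and the frequency separation $N>M$ is essential; this is the part of the argument where the most care is needed.
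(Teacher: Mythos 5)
The paper itself offers no proof of this lemma --- it is imported verbatim from \cite{KVZ} --- so the only question is whether your argument stands on its own. The first claim and the per-frequency Schur bound are fine: the kernel estimate of Lemma~\ref{L:KE} at $t=0$ applies on $|x|\geq\tfrac{1}{100}M^{-1}\gtrsim N^{-1}$, and the scale-invariant Schur integrals are indeed $O(1)$. The problem is the final summation, which is where the entire content of the lemma lives, and your Cotlar--Stein step is not justified. Write $T_N=\chi\,P^\pm P_N$ with $\chi=\chi_{\{|x|\geq\frac{1}{100}M^{-1}\}}$. Cotlar--Stein requires geometric decay of \emph{both} $\|T_NT_{N'}^*\|$ and $\|T_N^*T_{N'}\|$ in $|\log(N/N')|$. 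The first family is harmless: $T_NT_{N'}^*=\chi P^\pm P_NP_{N'}(P^\pm)^*\chi=0$ once $N,N'$ are a few dyadic steps apart. But $T_N^*T_{N'}=P_N(P^\pm)^*\chi^2P^\pm P_{N'}$ enjoys no such structure: the middle operator is not a Fourier multiplier, so the outer projections give nothing for free, and one-sided vanishing alone does not suffice (take $T_Nf=\langle f,e_N\rangle v$ with $\{e_N\}$ orthonormal: $T_NT_{N'}^*=0$ off-diagonal, yet $\|\sum_{N\leq K}T_N\|\sim\sqrt{K}$).

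Moreover, the reason you give for the cross-term decay --- the factor $\langle N(|x|-|y|)\rangle^{-m}$ --- cannot produce it. That factor only says $T_N$ nearly preserves the radial variable; composing two such kernels localizes $|y|\approx|y'|$ but yields $\|T_N^*T_{N'}\|=O(1)$ with no gain in $N/N'$. Whatever orthogonality is present comes from the \emph{phase} $e^{\pm iN(|x|-|y|)}$ implicit in the kernel (equivalently, from the Hankel-function asymptotics showing that $P^\pm P_N$ approximately preserves frequency support on $|x|\gtrsim N^{-1}$), and that information is destroyed the moment you pass to absolute values. To close the argument you must either retain the oscillation and run a genuine $TT^*$/stationary-phase computation, or abandon the dyadic decomposition and work directly with $P^\pm P_{>M}=\tfrac12P_{>M}\pm\tfrac{i}{\pi}\mathcal{H}P_{>M}$, where $\mathcal H$ has kernel $\tfrac{r^{2-d}\rho^{d-1}}{r^2-\rho^2}$: near the diagonal one uses uniform bounds for truncated Hilbert transforms, while in the off-diagonal regions (in particular $\rho\gg r$, where $\mathcal H$ alone is \emph{unbounded} on $L^2$) one must pair the smoothness of the kernel at scale $M^{-1}$ against the absence of frequencies below $M$ in $P_{>M}f$. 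That interplay is the actual crux of the lemma, and it is exactly the step your outline leaves unproved.
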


We will also need the following kernel estimates for the frequency-localized free propagator:
\begin{lemma}[Kernel estimates]\label{L:KE2} For any $m\geq 0$ and $M\in 2^{\mathbb{Z}}$, 
\[
|P_M e^{it\Delta}(x,y)| \lesssim_m \begin{cases}
M^d\langle M|x-y|\rangle^{-m} &: |t|\leq M^{-2} \\
|t|^{-\frac{d}{2}} &: |t|>M^{-2}\qtq{and} |x-y|\sim M|t| \\
\tfrac{M^d}{|M^2 t|^m\langle M|x-y|\rangle^m} &: \text{otherwise}.
\end{cases}
\]
\end{lemma}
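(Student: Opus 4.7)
The proof is an oscillatory integral analysis of the explicit Fourier representation. Writing the Littlewood--Paley bump as $\psi_M(\xi) := \varphi(\xi/M) - \varphi(2\xi/M)$ and rescaling $\xi = M\eta$, one finds
\[
P_M e^{it\Delta}(x,y) = (2\pi)^{-d} M^d \int_{\R^d} e^{i\Phi(\eta)}\psi(\eta)\,d\eta, \qquad \Phi(\eta) := M(x-y)\cdot\eta - tM^2|\eta|^2,
\]
with $\psi$ a fixed Schwartz bump supported on an annulus $\{|\eta|\sim 1\}$. The gradient $\nabla\Phi(\eta) = M(x-y) - 2tM^2\eta$ has a (possible) critical point at $\eta_c = (x-y)/(2Mt)$ with Hessian $-2tM^2\,\mathrm{Id}$. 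The three cases in the lemma correspond exactly to the regimes where this critical point produces negligible oscillation, lies on the support of $\psi$, or lies well separated from $\mathrm{supp}\,\psi$.

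In the regime $|t|\leq M^{-2}$, the quadratic part of $\Phi$ is bounded on $\mathrm{supp}\,\psi$, so the nontrivial oscillation comes from the linear term. When $M|x-y|\lesssim 1$ the integral is trivially bounded by $M^d$; when $M|x-y|\gtrsim 1$ the linear term dominates $\nabla\Phi$ and $m$ integrations by parts with the transpose operator $L^* = -i\,\mathrm{div}\bigl(\nabla\Phi\,|\nabla\Phi|^{-2}\,\cdot\,\bigr)$ yield the decay $\langle M|x-y|\rangle^{-m}$. In the stationary regime $|t|>M^{-2}$ with $|x-y|\sim M|t|$, one has $|\eta_c|\sim 1$ on $\mathrm{supp}\,\psi$ and the Hessian has size $|t|M^2 \gtrsim 1$; classical stationary phase yields $(|t|M^2)^{-d/2}$, which combines with the $M^d$ prefactor to give the announced $|t|^{-d/2}$ estimate.

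The remaining case is where the trickiest bookkeeping enters. There $\eta_c$ lies quantitatively outside $\mathrm{supp}\,\psi$, so on that support
\[
|\nabla\Phi(\eta)| \gtrsim M|x-y| + |tM^2|.
\]
If $M|x-y|\geq 1$, performing $2m$ integrations by parts and exploiting the elementary bound $(a+b)^{2m} \geq (ab)^m$ extracts the product decay $(\langle M|x-y|\rangle\,|tM^2|)^{-m}$; if instead $M|x-y|<1$ then necessarily $|x-y|\ll M|t|$ (because $|tM^2|>1$), hence $|\nabla\Phi|\gtrsim |tM^2|$ and $m$ integrations by parts already suffice since $\langle M|x-y|\rangle^{-m}\sim 1$. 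The main obstacle is precisely this product-type decay: a single batch of $m$ integrations by parts using only one of the two lower bounds produces decay in only one parameter, and one must exploit that the \emph{sum} lower bound on $|\nabla\Phi|$ controls both factors simultaneously.
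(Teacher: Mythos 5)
Your argument is correct and is exactly the standard stationary/non-stationary phase analysis that this lemma rests on; the paper itself states Lemma~\ref{L:KE2} without proof (it is of the type imported from \cite{KVZ}), so there is no competing argument to compare against. The one point worth noting is that in the first and third regimes the integration by parts only kicks in once $M|x-y|$ exceeds a fixed constant (otherwise $\nabla\Phi$ need not be bounded below by $M|x-y|$), but there the trivial bound $M^d$ already matches $M^d\langle M|x-y|\rangle^{-m}$ up to an $m$-dependent constant, so your proof is complete.
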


As usual, we denote $F(z)=\mu |z|^p z$. 

\begin{theorem}[Additional regularity]\label{T:regularity} Suppose $u$ is a cascade or soliton solution as in Theorem~\ref{T:reduction}. Then 
$u\in L_t^\infty H_x^{\frac12}(\R\times\R^d)$.
\end{theorem}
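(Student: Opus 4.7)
Since $N(t)\le 1$ along the lifespan, almost periodicity guarantees that for every $\eta>0$ there is $N_\eta$ with $\sup_t\||\nabla|^{s_c}P_{>N}u(t)\|_{L^2_x}<\eta$ for $N>N_\eta$, and Lemma~\ref{stb} then produces matching smallness of $\|P_{>N}u\|_{S(I_\delta(t_0))}$ uniformly in $t_0$, where $I_\delta(t_0)$ is an interval of length of order one. My plan is to leverage this smallness to prove quantitative polynomial decay of the frequency-localized $L^2_x$-tails, and then iterate in the regularity parameter to lift the a priori regularity of $u$ from $\dot H^{s_c}$ up to $\dot H^{1/2}$ in finitely many steps.

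The new feature compared with the self-similar case of Section~\ref{S:SS} is that the lifespan is all of $\R$, so one must integrate the reduced Duhamel formula over an unbounded range without a geometric scaling to aid summation. For this I would use the radial in/out decomposition $P^++P^-=I$, integrating the outgoing piece forward and the incoming piece backward:
\[
P^\pm P_{>N}u(t_0)=\pm i\lim_{T\to\pm\infty}\int_{t_0}^T P^\pm e^{i(t_0-s)\Delta}\mu P_{>N}F(u(s))\,ds.
\]
The kernel estimates of Lemma~\ref{L:KE} show that away from the outgoing/incoming cone $|y|-|x|\sim\pm N(s-t_0)$ the kernels of $P^\pm P_{>N}e^{\mp i(s-t_0)\Delta}$ are rapidly decreasing, which supplies the integrability in $s$ needed to close the unbounded time integrals. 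For $s\ge s_c$ and dyadic $N\ge 1$ set
\begin{align*}
\M_s(N)&:=\sup_{t\in\R}\||\nabla|^sP_{>N}u(t)\|_{L^2_x},\\
\S_s(N)&:=\sup_{t_0\in\R}\||\nabla|^sP_{>N}u\|_{S(I_\delta(t_0))},\\
\N_s(N)&:=\sup_{t_0\in\R}\||\nabla|^sP_{>N}F(u)\|_{N(I_\delta(t_0))}.
\end{align*}
Splitting the above integral into the short-range part $|s-t_0|\lesssim N^{-2}$ (handled by Strichartz, in analogy with Lemma~\ref{NcontrolsM}) and the long-range part (handled by the kernel bounds), one obtains $\M_s(N)+\S_s(N)\lesssim \N_s(N)+\text{(rapidly decaying tail)}$, reducing matters to controlling $\N_s(N)$.

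The self-improving control of $\N_s(N)$ is produced exactly as in Lemma~\ref{controlN}. Decomposing
\[
F(u)=F(u_{\le N^\theta})+\text{\O}\bigl(|u_{\le N^\theta}|^p u_{>N^\theta}\bigr)+\text{\O}\bigl(|u_{>N^\theta}|^{p+1}\bigr)
\]
for a suitable $\theta\in(0,1)$, the low-frequency piece is handled by the weighted Bernstein estimate (Lemma~\ref{L:WB}) to transfer a derivative off $F$, the mixed piece by the bilinear Strichartz estimate (Corollary~\ref{C:BS}) combined with H\"older in the interpolated radial Strichartz norms (Theorem~\ref{T:RS}), and the purely-high piece by H\"older and the uniform smallness of $\S_s(N^\theta)$. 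Iterating as in Corollary~\ref{MSNcontrolsN} and Lemma~\ref{L:initialS} first yields polynomial decay $\M_{s_c}(N)\lesssim N^{-\sigma}$, which with Bernstein already gives $u\in L^\infty_t L^2_x$. Running the same argument at successively higher regularity levels $s_c+k\eps$ then produces $u\in L^\infty_t\dot H^{1/2}_x$ after finitely many rounds, whence $u\in L^\infty_t H^{1/2}_x$. The main obstacle is closing the bilinear/H\"older bookkeeping at the higher regularity levels: as $s$ increases, the weights and integrability exponents admissible in Theorem~\ref{T:RS} become tight, and ensuring that a positive gain $N^{-\tilde\sigma}$ still survives in compatibility with the range \eqref{q-conditions} of $q$ is precisely what forces the technical restriction $p>p_0(d)$ in \eqref{range}.
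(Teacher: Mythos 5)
Your overall framework --- reduced Duhamel plus the in/out decomposition, a short-time piece handled by Strichartz, and a Gronwall iteration on frequency-localized norms --- matches the paper's strategy, but there is a genuine gap in your treatment of the long-time integral. You assert that the long-range part $|s-t_0|\gtrsim N^{-2}$ is ``handled by the kernel bounds'' and contributes only a ``rapidly decaying tail.'' This is not so: Lemma~\ref{L:KE} gives rapid decay of the kernel of $P_M^{\pm}e^{\mp it\Delta}$ only \emph{off} the propagating region $|y|-|x|\sim Mt$; on that region the kernel is of size $(|x|\,|y|)^{-\frac{d-1}{2}}|t|^{-\frac12}$, and $|t|^{-\frac12}$ is not integrable at infinity. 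The contribution of the region $|y|\gtrsim M|t|$ is precisely the main term (Lemma~\ref{L:MC} in the paper), and estimating it requires decomposing the nonlinearity as $F(u_{\le M})+\sum_{L\ge 2M}\text{\O}(u_L(u_{\le L})^p)$ and invoking the weighted radial Strichartz estimates of Theorem~\ref{T:RS} with a delicate choice of exponents $(r_j,\alpha_j,s_j)$; the summability condition $\alpha_0+\alpha_1+p\alpha_2>1-\tfrac1{r_0}$ over dyadic time intervals is exactly what forces $p>p_0(d)$. So the restriction does not, as you suggest, arise from ``bilinear/H\"older bookkeeping at higher regularity levels'' --- it arises already in the single estimate of the on-cone long-time contribution at the base regularity. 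Only the complementary region $|y|\ll M|t|$ is a genuine rapidly decaying tail (Lemma~\ref{L:TE}).

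A second, structural difference: you propose to iterate in the regularity parameter, climbing from $s_c$ to $\tfrac12$ in steps of size $\eps$. The paper avoids this entirely by proving the recurrence $\M(N)\lesssim_u N^{-s}+\sum_{M\le\eta N}(\tfrac MN)^s\M(M)$ for the $\dot H^{s_c}$-localized quantity with an exponent $s$ strictly larger than $\tfrac12-s_c$; a single application of the acausal Gronwall inequality (Lemma~\ref{L:AG}) then gives $\M(N)\lesssim_u N^{-s}$, and summing $N^{\frac12-s_c}\M(N)$ over $N\ge1$ yields $u\in L_t^\infty\dot H_x^{1/2}$ in one pass. Your multi-step scheme is not obviously fatal, but it would require re-deriving the Strichartz and nonlinear estimates at each non-critical regularity $s_c+k\eps$ (where the scaling-critical spaces $S(I)$, $N(I)$ of Section~\ref{S:FS} no longer match the data), and in any case it does not repair the missing main-term estimate above. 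Note also that the analogue of Lemma~\ref{controlN} does not transfer ``exactly'': that lemma exploits the self-similar scaling $N(t)=t^{-1/2}$ to balance powers of $T$ against powers of $A$, a mechanism unavailable in the soliton and cascade scenarios.
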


\begin{proof} Suppose $u$ is a cascade or soliton solution as in Theorem~\ref{T:reduction}. For $N\in 2^\Z$, we define 
\begin{align*}
\M(N) := \| u_{\geq N}\|_{L_t^\infty \dot H_x^{s_c}(\R\times\R^d)}\sim \Bigl\|\sqrt{\sum_{M\geq N} M^{2s_c} \|u_M(t)\|_{L_x^2}^2}\Bigr\|_{L_t^\infty}. 
\end{align*}
Note that $\M(N)\lesssim_u1$ uniformly in $N$, 
\[
\lim_{N\to\infty} \M(N) = 0 \qtq{and} \M(N)\lesssim \Bigl\|\sum_{M\geq N} M^{s_c} \|u_M(t)\|_{L_x^2}\Bigr\|_{L_t^\infty}.
\]

We will establish the following recurrence relation for $\M(N)$:

\begin{lemma}\label{lem:addreg} There exists $s>\tfrac12-s_c$ so that 
\[
\M(N)\lesssim_u N^{-s} + \sum_{M\leq \eta N}(\tfrac{M}{N})^s\M(M)
\]
for sufficiently small $\eta$ and sufficiently large $N$.
\end{lemma}

Combining Lemma~\ref{lem:addreg} with the Gronwall-type inequality of Lemma~\ref{L:AG}, we derive that there exists $s>\tfrac12-s_c$ such that
\begin{equation}\label{eq:addreg}
\M(N)\lesssim_u N^{-s}\quad\text{uniformly for $N\geq 1$}.
\end{equation}
Theorem~\ref{T:regularity} follows easily from this, since then
\begin{align*}
\|u\|_{L_t^\infty \dot H_x^{\frac12}} & \lesssim \|u_{\leq 1}\|_{L_t^\infty \dot H_x^{s_c}}+ \sum_{N\geq 1} N^{\frac12-s_c}\M(N)\lesssim_u 1.
\end{align*}

By time-translation symmetry, to prove Lemma~\ref{lem:addreg} it suffices to show that
\begin{equation}\label{ar-ets}
\sqrt{\sum_{M\geq N} M^{2s_c}\|u_M (0)\|_{L^2_x}^2} \lesssim_u N^{-s}+\sum_{M\leq\eta N}(\tfrac{M}{N})^s \M(M)
\end{equation}
for some $s>\tfrac12-s_c$ and for sufficiently small $\eta$ and sufficiently large $N$.

Let $\chi$ be the characteristic function of $[1,\infty)$ and let $\chi_N(x) = \chi(N|x|)$.  Using the in/out decomposition and the reduced Duhamel formula (Lemma~\ref{L:RDF}), we decompose
\begin{align}
\chi_N u_M(0)&= i \!\int_0^\delta \!\!\chi_N P^+_M e^{-it\Delta}  F(u(t))\, dt - i\! \int^0_{-\delta} \chi_N P^-_M e^{-it\Delta} F(u(t)) \,dt \label{eq:areg1}\\
&\quad+i\lim_{T\to\infty}   \int_\delta^T \chi_N P^+_M e^{-it\Delta}  F(u(t)) \,dt \label{eq:areg2}\\
&\quad- i\lim_{T\to\infty}  \int^{-\delta}_{-T} \chi_N P^-_M e^{-it\Delta}  F(u(t)) \,dt\label{eq:areg3}
\end{align}
and
\begin{align}
(1-\chi_N) u_M(0)&=i\int_0^\delta (1-\chi_N) e^{-it\Delta} P_M F(u(t)) \,dt \label{eq:areg4}\\
&\quad+ i\lim_{T\to\infty}\int^T_{\delta} (1-\chi_N) P_M e^{-it\Delta} F(u(t)) \,dt, \label{eq:areg5}
\end{align}
where $\delta>0$ will be determined below and the limits are in the weak $\dot H^{s_c}_x$ topology.

We first estimate the contribution of \eqref{eq:areg1} and \eqref{eq:areg4}. 

\begin{lemma}[Short-time contribution]\label{L:ST} Let $s\in(0,1)$. For $\eta>0$ sufficiently small,  there exists $\delta>0$ so that
\[
\sqrt{\sum_{M\geq N} M^{2s_c}\biggl\| \int_0^\delta e^{-it\Delta}P_M F(u(t))\,dt\biggr\|_{L^2_x}^2} \lesssim N^{-s}+\tfrac{1}{100}\sum_{M\leq \eta N}(\tfrac{M}{N})^s \M(M)
\]
for $N=N(u,s,\eta)$ large enough. Similar estimates hold over $[-\delta,0]$ and after multiplication by $\chi_N P^{\pm}$. 
\end{lemma}

\begin{proof} Fix $0<s<1$ and let $\eta>0$ to be determined below.  By Strichartz, it suffices to show
\[
\N(N) := \|P_{\geq N}F(u)\|_{N(0,\delta)} \lesssim N^{-s}+\sum_{M\leq \eta N}(\tfrac{M}{N})^s \M(M). 
\]
To this end, we first use Corollary~\ref{C:unif-small} to find $\delta=\delta(u,\eta)$ small enough that
\begin{equation}\label{addreg-small}
\|u\|_{S(0,\delta)}^p <\eta. 
\end{equation}
We then write
\[
P_{\geq N}F(u) = P_{\geq N} F(u_{\leq \eta N})+P_{\geq N} \text{\O}(u_{>\eta N}u^p). 
\]
By the weighted Bernstein inequality (Lemma~\ref{L:WB}), H\"older, and Strichartz,
\begin{align*}
\| P_{\geq N }F(u_{\leq \eta N}) \|_{N(0,\delta)} & \lesssim N^{-1} \| \nabla u_{\leq \eta N}\|_{S(0,\delta)} \|u_{\leq \eta N}\|_{S(0,\delta)}^p\\
& \lesssim \eta \sum_{M\leq \eta N} \tfrac{M}{N} \|u_M\|_{S(0,\delta)}\\
&\lesssim \sum_{M\leq \eta N} \tfrac{M}{N} [\M(M)+\N(M)].
\end{align*}
Next, by H\"older and Strichartz, 
\begin{align*}
\| u_{>\eta N}u^p \|_{N(0,\delta)}  & \lesssim \|u\|_{S(0,\delta)}^p[\M(\eta N)+\N(\eta N)]  \lesssim \eta[\M(\eta N)+\N(\eta N)].
\end{align*}

Collecting these estimates, we get
\[
\N(N) \lesssim \sum_{M\leq \eta N} \tfrac{M}{N}[\M(M)+\N(M)],
\]
which, combined with the Gronwall-type inequality of Lemma~\ref{L:AG}, yields the claim.
\end{proof} 

We turn to the estimation of the integrals over $|t|\geq\delta$, i.e. \eqref{eq:areg2}, \eqref{eq:areg3}, and \eqref{eq:areg5}.  We treat separately the main contribution coming from $|y|\gtrsim M|t|$ and that from the tail $|y|\ll M|t|$.  Let $\chi_{k,M}$ be the characteristic function of the set
\[
\{(t,y):2^k\delta\leq|t|\leq2^{k+1}\delta,\quad |y|\gtrsim M|t|\}. 
\]

\begin{lemma}[Main contribution]\label{L:MC} Let $A\in\{\chi_N P_M^\pm, (1-\chi_N)P_M\}$ and let $\delta$ be as in Lemma~\ref{L:ST}.  There exists $s>\tfrac12(\frac12-s_c)$ such that
\[
\sqrt{\sum_{M\geq N}M^{2s_c} \biggl\| \int_\delta^\infty Ae^{-it\Delta}\sum_k \chi_{k,M}F(u(t))\,dt\biggr\|_{L_x^2}^2} \lesssim_u (\delta N^2)^{-s},
\]
uniformly for $N>0$.  An analogous estimate holds on $(-\infty,-\delta)$. 
\end{lemma}

\begin{proof} It suffices to show
\[
\sum_{M\geq N}\sum_{k=0}^\infty M^{s_c} \biggl\| \int_\delta^\infty Ae^{-it\Delta}\chi_{k,M}F(u(t))\,dt\biggr\|_{L_x^2} \lesssim_u (\delta N^2)^{-s},
\]
uniformly for $N>0$.

Writing
\[
F(u-v)=(u-v)\int_0^1 F_z(v+\theta(u-v))\,d\theta + \overline{(u-v)}\int_0^1 F_{\bar z}(v+\theta(u-v))\,d\theta,
\]
we can decompose
\begin{equation}\label{bt}
\begin{aligned}
F(u) & = F(u_{\leq M}) + \sum_{L\geq 2M} [F(u_{\leq L})-F(u_{\leq \frac{L}{2}})]  \\
& =  F(u_{\leq M}) + \sum_{L\geq 2M} \text{\O}\bigl[u_L(u_{\leq L})^p\bigr]. 
\end{aligned}
\end{equation}

We begin by estimating the contribution of $F(u_{\leq M})$, namely,
\begin{equation}\label{eq:mc1}
\sum_{M\geq N}\sum_{k=0}^\infty M^{s_c}\biggl\| \int_\delta^\infty A e^{-it\Delta}\chi_{k,M} F(u_{\leq M})\,dt\biggr\|_{L_x^2}. 
\end{equation}
We remark that $A$ is bounded on $L^2$ (uniformly in $M\geq N$), and that {
by the radial Strichatrz estimate (cf. Proposition \ref{P:rad1} (i)) and Bernstein, 
\[
\| |x|^{\frac{d-1}{2}} u_{\leq M} \|_{L^4_tL^\infty_x(\R\times\R^d)} \lesssim_u M^{-s_c}
\langle 2^k \delta \rangle^{\frac14}. 
\]
Then by Strichartz, H\"older, and Bernstein,
\begin{align*}
\eqref{eq:mc1} & \lesssim \sum_{M\geq N}\sum_{k=0}^\infty M^{s_c} \|\chi_{k,M}F(u_{\leq M})\|_{L_t^1 L_x^2} \\
& \lesssim \sum_{M\geq N}\sum_{k=0}^\infty  (2^k\delta)^{1-\frac{p}4} M^{s_c}\|u_{\leq M}\|_{L_t^\infty L_x^2}(2^kM\delta)^{-\frac{p(d-1)}{2}}\| |x|^{\frac{d-1}{2}} u_{\leq M}\|_{L^4_t L_{x}^\infty}^p \\
& \lesssim_u \sum_{M\geq N}\sum_{k=0}^\infty (2^k \delta)^{1-\frac{p(2d-1)}{4}}\langle 2^k \delta \rangle^{\frac{p}4} M^{-\frac{p(d-1)}{2}-ps_c}
\lesssim_u (\delta N^2)^{-[\frac{p(2d-1)}{4}-1]}, 
\end{align*}
where we used the fact that $p>\tfrac{2}{d-1}$ to sum.  Note that
\[
\tfrac{p(2d-1)}{4}-1>\tfrac12(\tfrac12-s_c),
\]
provided
\begin{equation}\label{1207p1}
p>p_1(d)=\tfrac{5-d+\sqrt{d^2+22d+9}}{2(2d-1)}.
\end{equation}
We will collect all such constraints at the end of the lemma.}

We turn to
\begin{equation}\label{eq:mc2}
\sum_{M \geq N}\sum_{L\geq 2M}\sum_{k=0}^\infty M^{s_c} \biggl\| \int_\delta^\infty A e^{-it\Delta}\chi_{k,M}u_L(u_{\leq L})^p \,dt\biggr\|_{L_x^2}. 
\end{equation}

To estimate this term will require a careful choice of exponents and the interpolated Strichartz estimates in Theorem~\ref{T:RS}.  We introduce three sets of parameters, $(r_j,\alpha_j,s_j)$ that will satisfy the scaling relations
\begin{equation}\label{1207scaling}
\tfrac{d}{2}-s_j=\tfrac{d+2}{r_j}+\alpha_j,\quad j\in\{0,1,2\}.  
\end{equation}
We will take
\[
r_1=r_2,\quad s_1=s_c-,\quad s_2=s_c.
\]
For $r\in[2,6]$, the minimum appearing in \eqref{alpharange} of Theorem~\ref{T:RS} (restricted to the diagonal $q=r$) is given by
\[
\tfrac{2d-1}{4}-\tfrac{2d+1}{2r}. 
\]
For $\frac{4}{d+1}<p<\frac{4}{d}$ and
$$p>p_2(d):= \tfrac{2-d+\sqrt{d^2+12d-4}}{2d-1}$$
we may choose
\begin{equation}\label{r1r2}
\max\bigl\{2,\tfrac65(p+1)\bigr\}<r_1=r_2<\min\bigl\{\tfrac{6p}{8-p(2d-1)}, 2(p+1),6\bigr\}.
\end{equation}
This constraint guarantees that we may apply Strichartz (Theorem~\ref{T:RS}) with the weights
\begin{equation}\label{a1a2}
\alpha_1 = \tfrac2p-\tfrac{d+2}{r_1}+\qtq{and} \alpha_2=\tfrac2p-\tfrac{d+2}{r_2}=\tfrac2p-\tfrac{d+2}{r_1}. 
\end{equation}
It also guarantees that if we define
\begin{equation}\label{defr0}
r_0=\tfrac{r_1}{r_1-(p+1)}\qtq{so that} \tfrac{1}{r_0}+\tfrac{p+1}{r_1}=1,
\end{equation}
then $r_0\in(2,6)$.  Finally, we choose
\[
\alpha_0 = \tfrac{2d-1}{4}-\tfrac{2d+1}{2r_0}-,
\]
which also determines $s_0$ via \eqref{1207scaling}.  We now claim that for $r_1$ large enough, we have 
\begin{equation}\label{tosum1207}
\alpha_0+\alpha_1+p\alpha_2 > 1-\tfrac{1}{r_0},
\end{equation}
which will be used in order to sum below.  Indeed, using \eqref{a1a2} and \eqref{defr0}, we see that \eqref{tosum1207} is implied by
\begin{equation}\label{r1ub}
r_1>\tfrac{10p(p+1)}{8-p(2d-5)}.
\end{equation}
For \eqref{r1ub} to be compatible with \eqref{r1r2} requires
\[
p>
{
p_0(d)} := \begin{cases} \tfrac{15-2d+\sqrt{4d^2+100d+145}}{5(2d-1)} & 3 \leq d\leq 8 \\ \tfrac{4}{d+1} & d\geq 9.\end{cases}
\]

Having chosen exponents as above, we use Strichartz, H\"older, and Lemma~\ref{stb} to estimate
\begin{align*}
\eqref{eq:mc2} &\lesssim \sum_{M\geq N}\sum_{L\geq 2M}\sum_{k=0}^\infty M^{s_c+s_0} \| |x|^{-\alpha_0} \chi_{k,M} u_L (u_{\leq L})^p \|_{L_{t,x}^{r_0'}} \\
& \lesssim \sum_{M\geq N}\sum_{L\geq 2M}\sum_{k=0}^\infty M^{s_c+s_0}(2^k M\delta)^{-(\alpha_0+\alpha_1+p\alpha_2)} \| |x|^{\alpha_1}u_L\|_{L_{t,x}^{r_1}}\||x|^{\alpha_2}u_{\leq L}\|_{L_{t,x}^{r_2}}^p \\
& \lesssim_u \sum_{M\geq N}\sum_{L\geq 2M}\sum_{k=0}^\infty M^{s_c+s_0-(\alpha_0+\alpha_1+p\alpha_2)} (2^k\delta)^{-(\alpha_0+\alpha_1+p\alpha_2)}\langle 2^k\delta\rangle^{1-\frac{1}{r_0}}L^{s_1-s_c} \\
& \lesssim_u N^{s_1+s_0-(\alpha_0+\alpha_1+p\alpha_2)}\delta^{-(\alpha_0+\alpha_1+p\alpha_2)} \lesssim_u (N^2\delta)^{-(\alpha_0+\alpha_1+p\alpha_2)},
\end{align*}
where in the last line we use the scaling relations.  Finally, note that
\[
\alpha_0+\alpha_1+p\alpha_2 > \tfrac{p+1}{r_1} > \tfrac12(\tfrac12-s_c),
\]
provided
\[
r_1<\tfrac{4p(p+1)}{4-p(d-1)}.
\]
That this is compatible with \eqref{r1ub} follows from $p>\tfrac{4}{d+5}$. 

Collecting our estimates for \eqref{eq:mc1} and \eqref{eq:mc2}, we conclude that the desired estimate holds provided
\[
p>
{
\max\bigl\{p_0(d),p_1(d),p_2(d)\bigr\}
}
 =p_0(d).
\]
This completes the proof of Lemma~\ref{L:MC}. \end{proof}

We turn to the tail and denote by $\tilde\chi_{k,M}$ the characteristic function of the set
\[
\{(t,y): 2^k\delta\leq |t|\leq 2^{k+1}\delta,\quad |y|\ll M|t|\}. 
\]

\begin{lemma}[Tail estimate]\label{L:TE} For $N\gg \delta^{-\frac12}$, we have
\begin{align}
\sum_{M\geq N}\sum_{k=0}^\infty M^{s_c}\biggl\| \int_\delta^\infty [1-\chi_N] P_M e^{-it\Delta}\tilde\chi_{k,M} F(u)\,dt \biggr\|_{L_x^2} & \lesssim_u (\delta N^2)^{-50d}, \label{tail1}\\
\sum_{M\geq N}\sum_{k=0}^\infty M^{s_c}\biggl\| \int_\delta^\infty \chi_N P_M^\pm e^{-it\Delta}\tilde\chi_{k,M} F(u)\,dt \biggr\|_{L_x^2} & \lesssim_u (\delta N^2)^{-50d}.\label{tail2}
\end{align}
\end{lemma}

\begin{proof} We begin by combining Lemma~\ref{L:KE} and Lemma~\ref{L:KE2} to deduce the following kernel estimates: for $N\gg \delta^{-\frac12}$, 
\begin{align}
\bigl|[1-\chi_N(x)]P_M e^{-it\Delta}(x,y) \tilde \chi_{k,M}(y) \bigr| &\lesssim \frac{K_M(x,y)}{(M^2|t|)^{100d}}, \label{ke1}\\
|\chi_N(x) P_M^\pm e^{-it\Delta}(x,y) \tilde \chi_{k,M}(y)| &\lesssim \frac{K_M(x,y)}{(M^2|t|)^{100d} (M|x|)^{\frac{d-1}{2}}\langle M|y|\rangle^{\frac{d-1}{2}}},\label{ke2}
\end{align}
where
\[
K_M(x,y)=\frac{M^d}{\langle M(x-y)\rangle^{100d}} + \frac{M^d}{\langle M|x|-M|y|\rangle^{100d}}. 
\]
Indeed, for \eqref{ke1} we note that for $|t|\geq \delta \gg N^{-2}$, $|x|\leq N^{-1}$, and $|y|\ll M|t|$, we have $|y-x|\ll M|t|$. For \eqref{ke2}, we note that $|y|-|x|\ll M|t|$ under the given constraints; we also use
\[
\langle M^2|t| + M|x|-M|y|\rangle^{-200d}\lesssim (M^2|t|)^{-100d}\langle M|x|-M|y|\rangle^{-100d}. 
\]

Note that by Schur's test, we have
\begin{equation}\label{schur}
\|K_M\|_{L_x^{r'}\to L_x^{r}} \lesssim M^{d(1-\frac{2}{r})},\quad 2\leq r\leq\infty.
\end{equation}

We decompose the nonlinearity as in \eqref{bt}.  We first consider the contribution of $F(u_{\leq M})$ to \eqref{tail1} and \eqref{tail2}, for which it suffices to bound
\begin{equation}\label{12141}
\sum_{M>N}\sum_{k=0}^\infty M^{s_c}\biggl\| \iint \frac{K_M(x,y)}{(M^2|t|)^{100d}}\, \tilde \chi_{k,M} F(u_{\leq M}) \,dy\,dt \biggr\|_{L_x^2}. 
\end{equation}
For this, we use H\"older and Bernstein to estimate
\begin{align*}
\eqref{12141} & \lesssim \sum_{M\geq N}\sum_{k=0}^\infty M^{s_c}(M^2 2^k\delta)^{-100d} \|\tilde \chi_{k,M} F(u_{\leq M})\|_{L_t^1 L_x^2} \\
& \lesssim \sum_{M\geq N}\sum_{k=0}^\infty M^{s_c}(M^2 2^k\delta)^{-100d} 2^k\delta \|u_{\leq M}\|_{L_t^\infty L_x^{2(p+1)}}^{p+1} \\
& \lesssim_u \sum_{M\geq N} \sum_{k=0}^\infty M^{s_c}(M^2 2^k\delta)^{-100d} 2^k\delta M^{\frac{dp}{2}} \lesssim_u (\delta N^2)^{-50d}. 
\end{align*}

We next consider the contribution of $u_L (u_{\leq L})^p$ to \eqref{tail1} and \eqref{tail2}.  We need to estimate
\begin{equation}\label{1219-1}
\begin{aligned}
M^{s_c}&\biggl\| \iint \frac{K_M(x,y)}{(M^2|t|)^{100d}} \tilde\chi_{k,M} u_L (u_{\leq L})^p\,dy\,dt\biggr\|_{L_x^2(|x|\leq N^{-1})} \\
& + M^{s_c}\biggl\| \iint \frac{K_M(x,y)}{(M^2|t|)^{100d}(M|x|)^{\frac{d-1}{2}}} \tilde\chi_{k,M} u_L (u_{\leq L})^p\,dy\,dt\biggr\|_{L_x^2(|x|> N^{-1})}
\end{aligned}
\end{equation}
and then sum over $k\geq 0$, $L\geq 2M$, and $M\geq N$. 

For this we fix an exponent $r$ satisfying
\begin{equation}\label{1221r}
\max\{\tfrac{4-p(d-2)}{p(d+2)},\tfrac{-p^2(d-1)- p(d-3) + 4}{2p}\} < \tfrac 2r< \min\{p,\tfrac{d-1}{d}\}. 
\end{equation}
Such an $r$ exists under the assumption $p_0(d)<p<\frac4d$. 

We further introduce the parameters
\[
r_0 = \tfrac{2r(p+1)}{r+2}\qtq{and} \alpha = \tfrac{d+2}{r_0}-\tfrac{2}{p}.
\]
Note that \eqref{1221r} guarantees that $r_0> 2$, $\alpha>0$, and 
\[
-\tfrac{d}{r_0}<-\alpha<\tfrac{d-1}{2}-\tfrac{d}{r_0},
\]
which is needed when applying the radial Strichartz estimate (cf. Proposition~\ref{P:rad1}). 

Having chosen these parameters, we estimate \eqref{1219-1} using \eqref{schur}, H\"older's inequality, and Lemma~\ref{stb}.  Choosing $\eps$ sufficiently small, we have 
\begin{align*}
\eqref{1219-1}& \lesssim M^{s_c}(M^2 2^k\delta)^{-100d} \bigl\{ \|1\|_{L_x^r(|x|\leq N^{-1})} + \|(M|x|)^{-\frac{d-1}{2}}\|_{L_x^r(|x|>N^{-1})}\bigr\} \\
& \quad \times \biggl\| \int K_M(x,y)\tilde\chi_{k,M} u_L(u_{\leq L})^p \,dy\bigg\|_{L_t^1 L_x^{\frac{2r}{r-2}}} \\
& \lesssim M^{s_c}(M^2 2^k\delta)^{-100d}\bigl\{ N^{-\frac{d}{r}} + (\tfrac{N}{M})^{\frac{d-1}{2}}N^{-\frac{d}{r}}\bigr\} M^{\frac{2d}{r}} \|\tilde \chi_{k,M} u_L (u_{\leq L})^p \|_{L_t^1 L_x^{\frac{2r}{r+2}}} \\
& \lesssim M^{s_c}(M^2 2^k\delta)^{-100d} M^{\frac{2d}{r}}N^{-\frac{d}{r}}\langle 2^k\delta\rangle^{1-\frac{p+1}{r_0}} \|\tilde\chi_{k,M} u_L\|_{L_{t,x}^{r_0}} \|\tilde\chi_{k,M} u\|_{L_{t,x}^{r_0}}^p \\
& \lesssim M^{s_c}(M^2 2^k\delta)^{-100d}M^{\frac{2d}{r}}N^{-\frac{d}{r}}\langle 2^k \delta\rangle^{1-\frac{p+1}{r_0}}(M^2 2^k\delta)^{(p+1)\alpha+\eps} \\ & \quad\times \|\tilde\chi_{k,M} |x|^{-(\alpha-\eps)}u_L\|_{L_{t,x}^{r_0}} \| \tilde\chi_{k,M} |x|^{-\alpha} u\|_{L_{t,x}^{r_0}}^p  \\
& \lesssim_u  M^{s_c}(M^2 2^k\delta)^{-100d}M^{\frac{2d}{r}}N^{-\frac{d}{r}}\langle 2^k \delta\rangle(M^2 2^k\delta)^{(p+1)\alpha+\eps} L^{-\eps}.
\end{align*}
It follows that
\[
\sum_{M\geq N}\sum_{L\geq 2M}\sum_{k=0}^\infty \eqref{1219-1} \lesssim_u (N^2\delta)^{-50d},
\] 
which completes the proof.
\end{proof}

Collecting the results of Lemmas~\ref{L:ST}, \ref{L:MC}, and \ref{L:TE}, we deduce that \eqref{ar-ets} holds.  This completes the proof of Lemma~\ref{lem:addreg} and so proves Theorem~\ref{T:regularity}.
\end{proof}

\section{The cascade and soliton scenarios}\label{S:CS}

In this section, we rule out the cascade scenario of Theorem~\ref{T:reduction}, as well as the soliton scenario in the defocusing case, thereby completing the proofs of Theorems~\ref{T:main} and \ref{T:main3}. The key ingredient in both cases is the additional regularity afforded by Theorem~\ref{T:regularity}.  We will prove that cascade solutions must have zero mass, while soliton solutions are inconsistent with the Lin--Strauss Morawetz inequality in the defocusing case. 

We first rule out the cascade scenario. 
The following theorem completes the proof of Theorem \ref{T:minimizer}.

\begin{theorem}[No cascades]\label{T:no-cascade} 
Let $\mu\in\{\pm 1\}$.  There are no cascade solutions as in Theorem~\ref{T:reduction}.
\end{theorem}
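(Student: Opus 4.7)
The plan is to derive a contradiction by showing that any cascade solution as in Theorem~\ref{T:reduction} must carry zero $L^2$-mass, hence vanish. The two essential ingredients are Theorem~\ref{T:regularity}, which upgrades $u$ to $L^\infty_t H^{\frac12}_x$, and the defining cascade property $N(t_n)\to 0$ along a subsequence.

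First I would apply Theorem~\ref{T:regularity} so that $u\in L^\infty_t L^2_x$ with uniform bound. Standard mass-subcritical theory (see \cite{Cazenave}) then guarantees that our $H^{\frac12}$-solution coincides with the unique $L^2$-solution launched from $u(0)$, so that mass is conserved: there exists $M\ge 0$ with $\|u(t)\|_{L^2_x}\equiv M$ on $\R$. It therefore suffices to show $\|u(t_n)\|_{L^2_x}\to 0$ along the cascade sequence. Fix $\eta>0$ and let $C(\eta)$ be the compactness modulus from Definition~\ref{D:AP}. Since $\sup_t N(t)\le 1$, for $n$ large we have $C(\eta)N(t_n)\le 1$, and I would split
\[
\|u(t_n)\|_{L^2_x}^2 = \|u_{\le C(\eta)N(t_n)}(t_n)\|_{L^2_x}^2 + \int_{C(\eta)N(t_n)<|\xi|\le 1}|\widehat u(t_n,\xi)|^2\,d\xi + \int_{|\xi|>1}|\widehat u(t_n,\xi)|^2\,d\xi.
\]
The low-frequency piece is $\lesssim (C(\eta)N(t_n))^{|s_c|}\|u\|_{L^\infty_t\dot H^{s_c}_x}\to 0$ by a Bernstein-type estimate, using $s_c<0$; the middle piece is $\le \eta$ directly from almost periodicity, since $|\xi|^{2s_c}\ge 1$ on $\{|\xi|\le 1\}$.

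The step I expect to be the main obstacle is the remaining tail $\int_{|\xi|>1}|\widehat u|^2\,d\xi$, since neither almost periodicity alone (which only controls $|\xi|^{2s_c}|\widehat u|^2$, and $|\xi|^{2s_c}\le 1$ on this region) nor the $\dot H^{\frac12}_x$-bound alone (which gives only a fixed constant) makes it small. The natural remedy is to interpolate the two. Choosing $\theta=\tfrac{1}{1+2|s_c|}$ so that $2s_c\theta+(1-\theta)=0$, H\"older's inequality yields
\[
\int_{|\xi|>1}|\widehat u(t_n,\xi)|^2\,d\xi \le \Bigl(\int_{|\xi|>C(\eta)N(t_n)}|\xi|^{2s_c}|\widehat u|^2\,d\xi\Bigr)^{\!\theta}\Bigl(\int_{|\xi|>1}|\xi|\,|\widehat u|^2\,d\xi\Bigr)^{\!1-\theta}\lesssim_u \eta^\theta,
\]
using $\|u\|_{L^\infty_t \dot H^{\frac12}_x}\lesssim_u 1$ from Theorem~\ref{T:regularity} and monotonicity of the frequency-tail region.

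Combining the three estimates, $M^2 = \lim_n \|u(t_n)\|_{L^2_x}^2 \lesssim_u \eta+\eta^\theta$ for every $\eta>0$; sending $\eta\to 0$ forces $M=0$, so $u\equiv 0$, contradicting non-triviality of the minimal counterexample. The argument makes no use of the sign of $\mu$, consistent with the statement covering both $\mu=\pm 1$.
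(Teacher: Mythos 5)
Your proposal is correct and follows essentially the same route as the paper: upgrade to $H^{1/2}_x$ via Theorem~\ref{T:regularity}, invoke mass conservation, and show the mass vanishes along the sequence $N(t_n)\to 0$ by combining Bernstein at low frequencies with an interpolation between the almost-periodicity smallness in $\dot H^{s_c}_x$ and the uniform higher-regularity bound at high frequencies. The only (cosmetic) differences are that the paper uses a two-piece frequency split with interpolation against $\dot H^{|s_c|}_x$ at exponent $\tfrac12$, whereas you use a three-piece split and interpolate against $\dot H^{1/2}_x$; both yield the same conclusion.
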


\begin{proof} Suppose toward a contradiction that $u$ is a cascade solution as in Theorem~\ref{T:reduction}.  We will reach a contradiction by proving that $u$ has zero mass.  

Fix $\eta>0$.  By Theorem~\ref{T:regularity}, we know that $u\in L_t^\infty \dot H_x^{|s_c|}$. Thus, using almost periodicity and interpolation and choosing $C=C(\eta)$ sufficiently large, we can first estimate
\[
\|u_{>CN(t)}(t)\|_{L_x^2} \lesssim \| \nsc u_{>CN(t)}\|_{L_t^\infty L_x^2}^{\frac12} \| |\nabla|^{|s_c|}u\|_{L_t^\infty L_x^2}^{\frac12}\lesssim_u \eta^{\frac12},
\]
uniformly for $t\in \R$.  Next, by Bernstein we have
\[
\| u_{\leq CN(t)}(t)\|_{L_x^2} \lesssim [C(\eta)N(t)]^{|s_c|}\| \nsc u\|_{L_t^\infty L_x^2} \lesssim_u [C(\eta)N(t)]^{|s_c|},
\]  
uniformly for $t\in\R$.  Thus, by conservation of mass, we have
\[
\|u(0)\|_{L_x^2} = \|u(t)\|_{L_x^2} \lesssim_u \eta^{\frac12}+[C(\eta)N(t)]^{|s_c|},
\]
uniformly for $t\in\R$.  Applying this to a sequence of times along which $N(t)\to 0$ (recall $u$ is a cascade solution) and noting that $\eta>0$ was arbitrary, we deduce that $u(0)=0$, a contradiction. \end{proof}

We turn to the soliton scenario in the defocusing case.

\begin{theorem}[No solitons]\label{T:no-solitons} Let $\mu=1$. There are no soliton solutions as in Theorem~\ref{T:reduction}.
\end{theorem}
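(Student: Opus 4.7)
The plan is to argue by contradiction and exploit the Lin--Strauss Morawetz inequality, which is where the defocusing assumption $\mu=1$ becomes essential. Suppose towards contradiction that $u:\R\times\R^d\to\C$ is a radial soliton as in Theorem~\ref{T:reduction}, so that $N(t)\equiv 1$ and the orbit $\{u(t)\}_{t\in\R}$ is precompact in $\dot H^{s_c}_x$. By Theorem~\ref{T:regularity} the solution satisfies $u\in L_t^\infty H_x^{1/2}(\R\times\R^d)$; in particular, conservation of mass together with nontriviality of $u$ give $\|u(t)\|_{L^2_x}^2\equiv M[u]>0$ uniformly in $t$.

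The first step is to invoke the Lin--Strauss Morawetz estimate: for defocusing NLS with $d\ge 3$,
\[
\int_I \int_{\R^d}\frac{|u(t,x)|^{p+2}}{|x|}\,dx\,dt \;\le\; C\,\|u\|_{L_t^\infty \dot H_x^{1/2}(I\times\R^d)}^2,
\]
uniformly in $I\subset\R$. The right-hand side is bounded by a constant $M_u$ independent of $I$, thanks to the additional regularity, so the left-hand side stays uniformly bounded as $I=[-T,T]$ grows.

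The heart of the proof is to produce a uniform-in-$t$ lower bound
\[
\int_{\R^d}\frac{|u(t,x)|^{p+2}}{|x|}\,dx \;\ge\; c_0\;>\;0,
\]
which, combined with the Morawetz bound above, yields $2Tc_0\le M_u$ and the desired contradiction as $T\to\infty$. To obtain it I will combine the soliton almost periodicity (at scale $N(t)\equiv 1$, which localizes $|\nabla|^{s_c}u(t)$ to a fixed ball $\{|x|\le R_0\}$ uniformly in $t$) with the interpolation inequality $\|f\|_{L^2}\lesssim \|f\|_{\dot H^{s_c}}^\theta \|f\|_{\dot H^{1/2}}^{1-\theta}$ with $\theta=\tfrac{1}{1+2|s_c|}$. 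Together with boundedness of the orbit $K=\{u(t)\}$ in $H^{1/2}_x$, this shows that any $\dot H^{s_c}$-Cauchy subsequence is in fact $L^2$-Cauchy, so precompactness of $K$ in $\dot H^{s_c}_x$ upgrades to precompactness in $L^2_x$, and via Rellich--Kondrachov in the radial class to precompactness in $L^q_x$ throughout $q\in[2,\tfrac{2d}{d-1}]$. Conservation of mass rules out any degeneration of $K$ to zero, yielding a uniform lower bound $\|u(t)\|_{L^q}\ge c_1>0$; localizing to $\{|x|\le R\}$ with $R\gg R_0$ via the almost periodicity (and the $L^2$ tightness inherited from it) concentrates the $L^q$ mass there, so $\int|u(t)|^{p+2}/|x|\,dx\ge c_1^{p+2}/R$ as soon as $p+2\le \tfrac{2d}{d-1}$.

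The main obstacle is the remaining case $p+2>\tfrac{2d}{d-1}$, where $H^{1/2}$ does not Sobolev-embed into $L^{p+2}$. In that regime I plan to either bootstrap the additional regularity of Theorem~\ref{T:regularity} past $H^{1/2}$ (the recursion in Lemma~\ref{lem:addreg} can be iterated in the soliton scenario because $N(t)\equiv 1$ rules out any high-frequency concentration), or to invoke the radial Sobolev embedding on a fixed annulus to pointwise control $\|u\|_{L^\infty(\{|x|\ge R^{-1}\})}$ and then interpolate against the $L^2$ mass. Once the uniform lower bound is in hand, the Morawetz estimate closes out the contradiction and completes the proof of Theorem~\ref{T:main}.
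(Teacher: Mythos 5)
Your global plan is right and matches the paper up to the last step: invoke Lin--Strauss Morawetz (Lemma~\ref{L:morawetz}), bound its right-hand side uniformly in $I$ via the $H^{1/2}_x$ regularity from Theorem~\ref{T:regularity}, upgrade precompactness of the orbit from $\dot H^{s_c}_x$ to $L^2_x$ using the $H^{1/2}_x$ bound and interpolation, and then seek a time-uniform lower bound for $\int |u(t,x)|^{p+2}|x|^{-1}\,dx$ so that the Morawetz bound forces $|I|\lesssim 1$.

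The gap is in the lower-bound step, and it is not a fringe case: for $d\geq 3$ in the paper's range one generically has $p+2>\tfrac{2d}{d-1}$, so $H^{1/2}_x$ does not control $L^{p+2}_x$. Your two proposed workarounds do not close this. Bootstrapping the regularity of Theorem~\ref{T:regularity} past $H^{1/2}_x$ is not free --- the exponent $s>\tfrac12-s_c$ produced by Lemma~\ref{lem:addreg} (via Lemma~\ref{L:MC}) is exactly where the restriction $p>p_0(d)$ comes from, and the argument is optimized to barely clear $H^{1/2}_x$; there is no indication it iterates to higher regularity without more hypotheses. The annulus / radial-Sobolev sketch is too vague to evaluate and would still need to handle the region near the origin where $|x|^{-1}$ is singular. (Also, your Rellich--Kondrachov step to precompactness on the closed interval $[2, \tfrac{2d}{d-1}]$ overreaches; interpolation of $L^2$-precompactness against an $H^{1/2}_x$ bound gives precompactness only for $q$ strictly below $\tfrac{2d}{d-1}$.)

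The paper sidesteps the $L^{p+2}$ issue entirely with a simple truncation: since
\[
\int_{\R^d}\frac{|u(t,x)|^{p+2}}{|x|}\,dx \;\geq\; \int_{\R^d}\frac{\min\{1,|u(t,x)|^{p+2}\}}{\langle x\rangle}\,dx,
\]
and the right-hand functional is continuous on $L^2_x$ (the integrand is bounded, Lipschitz in $|u|$ where it matters, and has an integrable weight) and vanishes only at $u\equiv 0$, precompactness of the orbit in $L^2_x$ together with mass conservation and $u\neq 0$ already gives a uniform positive lower bound. No $L^{p+2}$ integrability, extra regularity, or radial embedding is needed. You should replace your lower-bound mechanism with this truncation argument; the rest of your outline then goes through.
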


We will need the following standard Morawetz estimate (cf. \cite{LS}).

\begin{lemma}[Lin--Strauss Morawetz estimate]\label{L:morawetz} Let $u:I\times\R^d\to\C$ be a solution to \eqref{nls} with $\mu=1$ in dimension $d\geq 3$.  Then
\begin{equation}\label{morawetz}
\int_I \int_{\R^d} \frac{|u(t,x)|^{p+2}}{|x|}\,dx\,dt \lesssim \| |\nabla|^{\frac12} u\|_{L_t^\infty L_x^2(I\times\R^d)}^2.
\end{equation}
\end{lemma}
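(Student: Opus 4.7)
The plan is to prove this by the standard Morawetz multiplier method with weight $a(x)=|x|$. I will introduce the Morawetz action
\[
M(t):=2\,\Im\!\int_{\R^d}\overline{u(t,x)}\,\tfrac{x}{|x|}\!\cdot\nabla u(t,x)\,dx,
\]
differentiate in time using the equation $i\partial_t u=-\Delta u+|u|^p u$, and after several integrations by parts derive the by-now-standard Morawetz identity
\[
\tfrac{d}{dt}M(t)=4\!\int\partial_i\partial_j a\,\Re(\overline{\partial_i u}\,\partial_j u)\,dx-\int|u|^2\Delta^2a\,dx+\tfrac{2p\mu}{p+2}\!\int\Delta a\,|u|^{p+2}\,dx.
\]
The rigorous derivation is only valid \emph{a priori} for Schwartz solutions; the general case is recovered at the end by a standard density/approximation argument using the $\dot H^{1/2}$ bound that appears on the right-hand side.

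Next I will check that every term on the right-hand side is nonnegative under the hypotheses $\mu=+1$ and $d\geq 3$. With $a(x)=|x|$ one computes $\nabla^2 a=\tfrac{1}{|x|}(I-\hat{x}\otimes\hat{x})$, which is positive semidefinite, so the quadratic term in $\nabla u$ is $\geq 0$. Since $\Delta a=(d-1)/|x|>0$, the nonlinear term contributes $\tfrac{2p(d-1)}{p+2}\int|u|^{p+2}/|x|\,dx\geq 0$. Finally, using $\Delta(|x|^{-1})=-(d-3)|x|^{-3}$ when $d\geq 4$ (in the classical sense) and $\Delta(|x|^{-1})=-4\pi\delta_0$ when $d=3$ (in the distributional sense), one sees in all cases $d\geq 3$ that $-\Delta^2a\geq 0$. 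Dropping the first two nonnegative terms yields
\[
\tfrac{d}{dt}M(t)\gtrsim_{d,p}\int_{\R^d}\tfrac{|u(t,x)|^{p+2}}{|x|}\,dx,
\]
and integrating over $t\in I$ gives
\[
\int_I\!\int_{\R^d}\tfrac{|u|^{p+2}}{|x|}\,dx\,dt\lesssim\sup_{t\in I}|M(t)|.
\]

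It remains to establish the endpoint bound $|M(t)|\lesssim \||\nabla|^{1/2}u(t)\|_{L^2}^2$, which is what makes the final inequality sharp (the cruder bound $|M|\leq 2\|u\|_{L^2}\|\nabla u\|_{L^2}$ being insufficient). I would rewrite $M(t)=\langle u,Au\rangle$, where $A=\tfrac12(P+P^*)$ with $P=\tfrac{x}{|x|}\cdot(-i\nabla)$ and $P^*=P+i(d-1)/|x|$, so that $A$ is self-adjoint and of order one. Inserting $|\nabla|^{1/2}|\nabla|^{-1/2}$ on both sides gives
\[
|M(t)|=\bigl|\langle|\nabla|^{1/2}u,\;|\nabla|^{-1/2}A|\nabla|^{-1/2}\,|\nabla|^{1/2}u\rangle\bigr|\lesssim \||\nabla|^{1/2}u\|_{L^2}^2,
\]
once one shows the zeroth-order operator $|\nabla|^{-1/2}A|\nabla|^{-1/2}$ is bounded on $L^2(\R^d)$.

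The main obstacle is precisely this last $L^2$-boundedness: the symbol of $A$ is $\tfrac{x}{|x|}\cdot\xi$ plus lower-order corrections supported by the Hardy weight $1/|x|$, and handling both the angular singularity of $x/|x|$ at the origin and the lower-order term $(d-1)/|x|$ requires either a pseudo-differential argument or a direct kernel estimate combined with Hardy's inequality and Plancherel. All the other steps (Morawetz identity, sign analysis, approximation to remove the smoothness assumption) are routine.
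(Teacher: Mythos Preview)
Your proposal is correct and follows the standard Lin--Strauss argument. The paper itself does not prove this lemma; it is simply stated with a citation to \cite{LS} and treated as a known result, so there is no ``paper's own proof'' to compare against. Your outline is precisely the classical derivation: Morawetz action with weight $a(x)=|x|$, the virial-type identity, sign analysis of the Hessian/bi-Laplacian terms (valid for $d\geq 3$), and then the crucial $\dot H^{1/2}$ endpoint bound on $M(t)$.

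One small remark on the step you flagged as the main obstacle: the $L^2$-boundedness of $|\nabla|^{-1/2}A|\nabla|^{-1/2}$ can be handled cleanly by passing to the Fourier side, where multiplication by $x_j/|x|$ becomes a Riesz transform in the frequency variable. The principal part then reduces to boundedness of the Riesz transforms on $L^2(|\xi|\,d\xi)$, which holds since $|\xi|$ is an $A_2$ weight for $d\geq 2$; the lower-order $(d-1)/|x|$ piece is controlled directly by Hardy's inequality $\|\,|x|^{-1/2}f\|_{L^2}\lesssim\||\nabla|^{1/2}f\|_{L^2}$, valid for $d\geq 2$. So this step, while requiring care, is not a genuine obstacle.
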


Note that by Theorem~\ref{T:regularity}, the right-hand side of \eqref{morawetz} is bounded uniformly in $I\subset\R$ when $u$ is a soliton.  To prove Theorem~\ref{T:no-solitons}, we will prove that the left-hand side of \eqref{morawetz} is bounded below by $|I|$, thus obtaining a contradiction for $I$ long enough. 

\begin{proof}[Proof of Theorem~\ref{T:no-solitons}]
Suppose $u$ is a soliton in the sense of Theorem~\ref{T:reduction}.  As noted above, it suffices to show that the left-hand side of \eqref{morawetz} is bounded below by $|I|$.  To this end, we first observe that since the orbit $\{u(t):t\in\R\}$ is bounded in $H^{1/2}_x$ and precompact in $\dot H^{s_c}$, it is precompact in $L^2$.

On the other hand,
$$
\int_{\R^d}\frac{|u(t,x)|^{p+2}}{|x|} \,dx \geq \int_{\R^d}\frac{\min\{1,|u(t,x)|^{p+2}\}}{\langle x\rangle} \,dx
$$
and the right-hand side is continuous on $L^2$ and vanishes only at $u\equiv 0$.  Thus LHS\eqref{morawetz} is bounded below by a multiple of $|I|$, thereby proving the theorem.
\end{proof}


\end{document}